\newtheorem{thm}{Theorem}[section]
\newtheorem{lem}[thm]{Lemma}
\newtheorem{proposition}[thm]{Proposition}
\theoremstyle{definition}
\newtheorem{defin}[thm]{Definition}
\newtheorem{rem}[thm]{Remark}
\newtheorem{exa}[thm]{Example}
\numberwithin{equation}{section}
\DeclareMathOperator{\supp}{supp\,}
\DeclareMathOperator{\ebdiv}{div\,}
\DeclareMathOperator{\Rng}{Rng\,}
\DeclareMathOperator{\loc}{loc}
\DeclareMathOperator{\ebint}{int\,}
\begin{document}

\title{Minimizers of anisotropic surface tensions under gravity: higher dimensions via symmetrization}

\author{Eric Baer
\footnote{Massachusetts Institute of Technology, 77 Massachusetts Ave., Cambridge, MA 02139-4381; ebaer@math.mit.edu}}

\date{}

\maketitle

\begin{abstract}
We consider a variational model describing the shape of liquid drops and crystals under the influence of gravity, resting on a horizontal surface.  Making use of anisotropic symmetrization techniques, we establish existence, convexity and symmetry of minimizers for a class of surface tensions admissible to the symmetrization procedure.  In the case of smooth surface tensions, we obtain uniqueness of minimizers via an ODE characterization.

\end{abstract}

\section{Introduction}
In this work, we consider questions of existence, regularity and uniqueness for a class of variational problems describing the shape of liquid drops and crystals under the influence of gravity and supported by a horizontal surface, from the viewpoint of symmetrization techniques.  More precisely, fixing $N\geq 2$ we consider minimizers of the functional 
\begin{align*}
\mathscr{F}(E)&:=\mathscr{F}_{s}(E)+\mathscr{F}_c(E)+\mathscr{F}_{p}(E)
\end{align*}
among sets of finite perimeter $E\subset\mathbb{R}^N$ with 
\begin{align*}
E\subset \{x=(x_1,x_2,\cdots,x_N)\in\mathbb{R}^N:x_N>0\},
\end{align*}
and satisfying the volume constraint $|E|=m$ for some fixed $m>0$, where the terms $\mathscr{F}_s$, $\mathscr{F}_c$ and $\mathscr{F}_p$ are functionals respectively representing the internal surface tension of the drop or crystal, the contact energy between the shape and the supporting surface, and the gravitational potential energy.

For the purposes of our study, we will assume that the surface energy $\mathscr{F}_s$ takes the form
\begin{align*}
\mathscr{F}_{s}(E)&:=\int_{\partial^*E\cap\{x:x_N>0\}} f(\nu_E(x))d\mathcal{H}^{N-1}(x)
\end{align*}
for a given convex function $f:\mathbb{R}^N\rightarrow\mathbb{R}^+$ which is positively $1$-homogeneous (i.e. $f(\lambda x)=\lambda f(x)$ for all $\lambda>0, x\in\mathbb{R}^N$), and satisfies $f(x)>0$ for $|x|>0$.  Here, $\partial^*E$ denotes the reduced boundary of $E$ and for each $x\in\partial^*E$, $\nu_E(x)$ refers to the (measure theoretic) unit outer normal to $E$ at $x$.  We shall also impose some additional symmetry (Definition \ref{def_symm}) and admissibility (Definition \ref{def_adm}) constraints on the function $f$ which are adapted to our particular approach; as will be discussed shortly, these enable the application of a suitable codimension $N-1$ Steiner symmetrization procedure (see for instance Figure \ref{figsymm}).

Moreover, letting $\omega\in (-f(e_N),f(-e_N))$ be given, we shall assume that the contact energy takes the form
\begin{align*}
\mathscr{F}_{c}(E)&:=\omega\mathcal{H}^{N-1}(\partial^*E\cap \{x:x_N=0\}),
\end{align*}
while the gravitational potential is
\begin{align*}
\mathscr{F}_{p}(E)&:=\int_{E} x_Ndx,\quad E\in \mathcal{F}_m,
\end{align*}
where $\mathcal{F}_m$ denotes the collection of sets of finite perimeter 
\begin{align*}
E\subset \{x=(x',x_N)\in\mathbb{R}^N=\mathbb{R}^{N-1}\times\mathbb{R}:x_N>0\}
\end{align*}
which satisfy the volume constraint $|E|=m$ (see ($\ref{definition-Fm}$ below).

With this choice of energy functionals, the problem of finding minimizers to $\mathscr{F}$ is known as the {\it sessile drop} problem, and may be seen as an attempt to understand the balance between the energies involved.  The condition $\omega\in (-f(e_N),f(-e_N))$ is a natural requirement for the existence of minimizers, ensuring that it is not energetically preferred for minimizers to ``spread out'' into an infinitesimally thin sheet or to separate from the supporting surface (these two cases may be seen as endpoints of the contact angles depicted in Figures $1$ and $2$).

\begin{figure}
\centering
\def\svgwidth{0.72\columnwidth}
%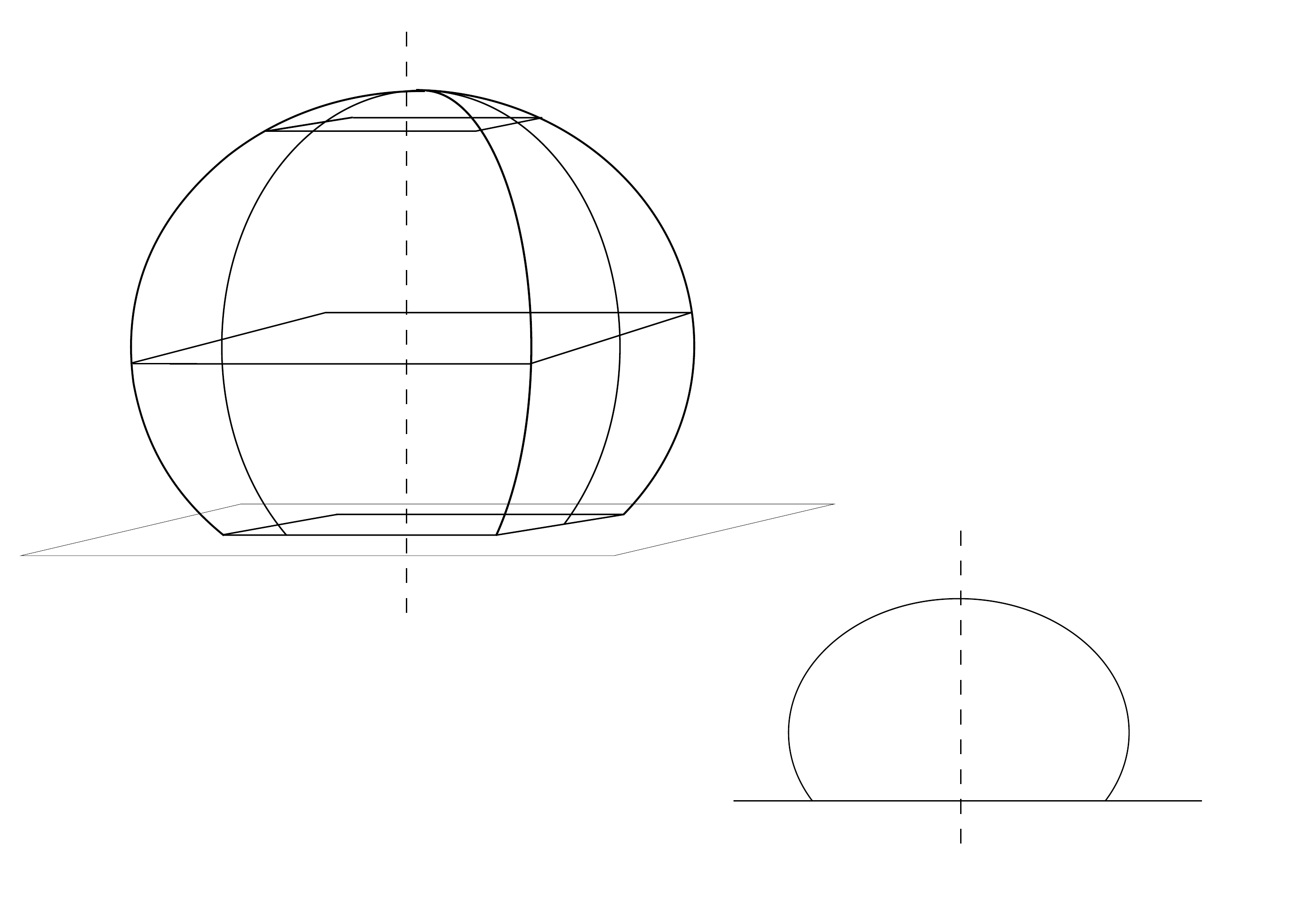
%% Creator: Inkscape inkscape 0.48.0, www.inkscape.org
%% PDF/EPS/PS + LaTeX output extension by Johan Engelen, 2010
%% Accompanies image file 'fig-minimizer-a.pdf' (pdf, eps, ps)
%%
%% To include the image in your LaTeX document, write
%%   \input{<filename>.pdf_tex}
%%  instead of
%%   \includegraphics{<filename>.pdf}
%% To scale the image, write
%%   \def\svgwidth{<desired width>}
%%   \input{<filename>.pdf_tex}
%%  instead of
%%   \includegraphics[width=<desired width>]{<filename>.pdf}
%%
%% Images with a different path to the parent latex file can
%% be accessed with the `import' package (which may need to be
%% installed) using
%%   \usepackage{import}
%% in the preamble, and then including the image with
%%   \import{<path to file>}{<filename>.pdf_tex}
%% Alternatively, one can specify
%%   \graphicspath{{<path to file>/}}
%% 
%% For more information, please see info/svg-inkscape on CTAN:
%%   http://tug.ctan.org/tex-archive/info/svg-inkscape

\begingroup
  \makeatletter
  \providecommand\color[2][]{%
    \errmessage{(Inkscape) Color is used for the text in Inkscape, but the package 'color.sty' is not loaded}
    \renewcommand\color[2][]{}%
  }
  \providecommand\transparent[1]{%
    \errmessage{(Inkscape) Transparency is used (non-zero) for the text in Inkscape, but the package 'transparent.sty' is not loaded}
    \renewcommand\transparent[1]{}%
  }
  \providecommand\rotatebox[2]{#2}
  \ifx\svgwidth\undefined
    \setlength{\unitlength}{841.88974609pt}
  \else
    \setlength{\unitlength}{\svgwidth}
  \fi
  \global\let\svgwidth\undefined
  \makeatother
  \begin{picture}(1,0.70707072)%
    \put(0,0){\includegraphics[width=\unitlength]{fig-minimizer-a.pdf}}%
    \put(0.32060343,0.65848492){\color[rgb]{0,0,0}\makebox(0,0)[lb]{\smash{$t$}}}%
    \put(0.75639376,0.28604739){\color[rgb]{0,0,0}\makebox(0,0)[lb]{\smash{$t$}}}%
    \put(0.82358611,0.05567367){\color[rgb]{0,0,0}\makebox(0,0)[lb]{\smash{$\alpha(t)$}}}%
    \put(0.54329807,0.47802551){\color[rgb]{0,0,0}\makebox(0,0)[lb]{\smash{$E_t=\alpha(t)K_h$}}}%
  \end{picture}%
\endgroup
\caption{Example of a minimizer of $\mathscr{F}$ with $\omega>0$, together with its associated profile.  The minimizer is axially symmetric (see Theorem $\ref{thm_min}$), and is therefore characterized by the curve $\{(\alpha(t),t):t>0\}\subset\mathbb{R}^2\}$, whose even reflection across the axis $x_1=0$ is depicted at right.\label{fig-mina}}
\end{figure}

The surface energy defined above is a variant of the Wulff functional
\begin{align}
E\mapsto \int_{\partial^*E} f(\nu_E(x))d\mathcal{H}^{N-1}(x),\label{wulff}
\end{align}
which is a common mathematical model for the shape of crystalline structures driven by surface tension.  When the weight function $f$ is given by $f(\nu)=|\nu|$, the functional ($\ref{wulff}$) reduces to the perimeter of $E$, while the functional $\mathscr{F}_{s}(E)$ reduces to the relative perimeter of $E$ with respect to the half space $\mathbb{R}^N_+$.  This setting is referred to as the {\it isotropic} case -- the dependence on the direction of the outer normal disappears.  On the other hand, crystalline shapes are modelled by piecewise linear weights $f$ (in which case the Wulff shape $K_f$ is polygonal).

In the isotropic setting, the study of minimizers of $\mathscr{F}$ is a classical problem and has been studied by a wide variety of authors.  In particular, we note the works of Gonzalez \cite{Gonzalez,Gonzalez2} in which symmetrization techniques are used to establish existence, symmetry and regularity for the isotropic sessile drop problem.  Subsequently, Gonzalez and Tamanini studied the convexity of minimizers \cite{GonzalezTamanini}.  Concerning stationary points for the functional $\mathscr{F}$, Wente \cite{Wente1,Wente2} established symmetry and stability results for such surfaces, while Finn \cite{Finn1,FinnBook} established uniqueness results for the symmetric sessile drop.  These works have formed the basis of a rich literature in the subject; see, for instance \cite{GonzalezMassariTamanini,Nikolov,CaffarelliMellet,CaffarelliMellet2,KosioPalmer,MelletNolen,Treinen,ElcratTreinen,ElcratKimTreinen}.

In the {\it anisotropic} case, when $f$ is no longer constant on $S^{N-1}$, the study of the shape of sessile drops is considerably more subtle.  The unique minimizer of ($\ref{wulff}$) with respect to a volume constraint of the form $|E|=m$ is well-known to be a convex set known as the Wulff shape \cite{Taylor,Wulff}; in particular, this set may be written as
\begin{align*}
K_f:=\bigcap_{\nu\in S^{N-1}} \{x\in\mathbb{R}^{N}:x\cdot \nu<f(\nu)\}.
\end{align*}
When the contact energy is taken into account (i.e. we consider minimizers of $\mathscr{F}_s+\mathscr{F}_c$), the minimizer to the resulting variational problem is a truncated Wulff shape; this is the result of the classical Winterbottom construction \cite{Winterbottom} (see also \cite{summertop}).

Much of the prior work in the anisotropic case concerns the planar case $N=2$; in particular, we point out the work of McCann \cite{McCann} (and the work of Okikiolu cited therein), in which for a general class of surface tensions and potentials (not necessarily restricted to the gravitational case considered here) minimizers are shown to consist of a countable union of connected components, each of which is convex and a minimizer among convex sets of the same mass.  Moreover, in the case of the half space with gravitational potential, Avron, Taylor and Zia have established convexity and uniqueness of minimizers \cite{AZT}.  However, in the higher dimensional setting $N>2$, much less is known.  We make particular mention of the recent work of Figalli and Maggi \cite{FM}, where (again for general potentials) the authors show that minimizers of sufficiently small mass are convex and uniformly close to the Wulff shape.

Motivated by the utility of symmetrization techniques in the isotropic case, the goal of the present work is to approach the problem with the aim of applying recent developments in anisotropic symmetrization, that is, notions of symmetrization adapted to the functional $\mathscr{F}_s$ (see for instance \cite{JS}, as well as Definition $\ref{def_symm}$ and Theorem \ref{thm_symm_f} below). 

\begin{figure}
\centering
\subfloat[(a)][]{\def\svgwidth{0.42\columnwidth}
\def\svgwidth{0.42\columnwidth}
%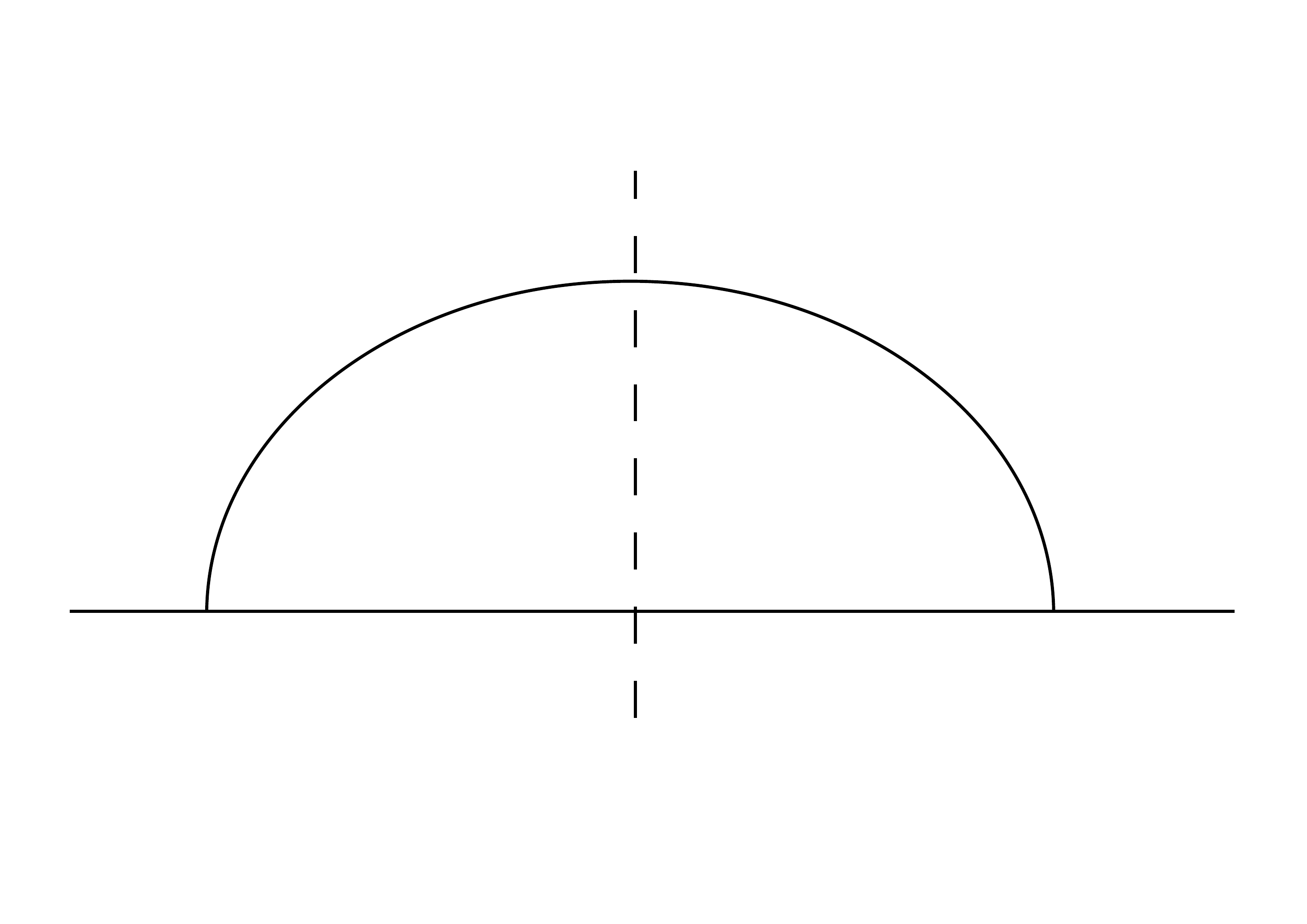}
%% Creator: Inkscape inkscape 0.48.0, www.inkscape.org
%% PDF/EPS/PS + LaTeX output extension by Johan Engelen, 2010
%% Accompanies image file 'fig-minimizer-b.pdf' (pdf, eps, ps)
%%
%% To include the image in your LaTeX document, write
%%   \input{<filename>.pdf_tex}
%%  instead of
%%   \includegraphics{<filename>.pdf}
%% To scale the image, write
%%   \def\svgwidth{<desired width>}
%%   \input{<filename>.pdf_tex}
%%  instead of
%%   \includegraphics[width=<desired width>]{<filename>.pdf}
%%
%% Images with a different path to the parent latex file can
%% be accessed with the `import' package (which may need to be
%% installed) using
%%   \usepackage{import}
%% in the preamble, and then including the image with
%%   \import{<path to file>}{<filename>.pdf_tex}
%% Alternatively, one can specify
%%   \graphicspath{{<path to file>/}}
%% 
%% For more information, please see info/svg-inkscape on CTAN:
%%   http://tug.ctan.org/tex-archive/info/svg-inkscape

\begingroup
  \makeatletter
  \providecommand\color[2][]{%
    \errmessage{(Inkscape) Color is used for the text in Inkscape, but the package 'color.sty' is not loaded}
    \renewcommand\color[2][]{}%
  }
  \providecommand\transparent[1]{%
    \errmessage{(Inkscape) Transparency is used (non-zero) for the text in Inkscape, but the package 'transparent.sty' is not loaded}
    \renewcommand\transparent[1]{}%
  }
  \providecommand\rotatebox[2]{#2}
  \ifx\svgwidth\undefined
    \setlength{\unitlength}{841.88974609pt}
  \else
    \setlength{\unitlength}{\svgwidth}
  \fi
  \global\let\svgwidth\undefined
  \makeatother
  \begin{picture}(1,0.70707072)%
    \put(0,0){\includegraphics[width=\unitlength]{fig-minimizer-b.pdf}}%
  \end{picture}%
\endgroup}
\subfloat[(b)][]{\def\svgwidth{0.42\columnwidth}
\def\svgwidth{0.42\columnwidth}
%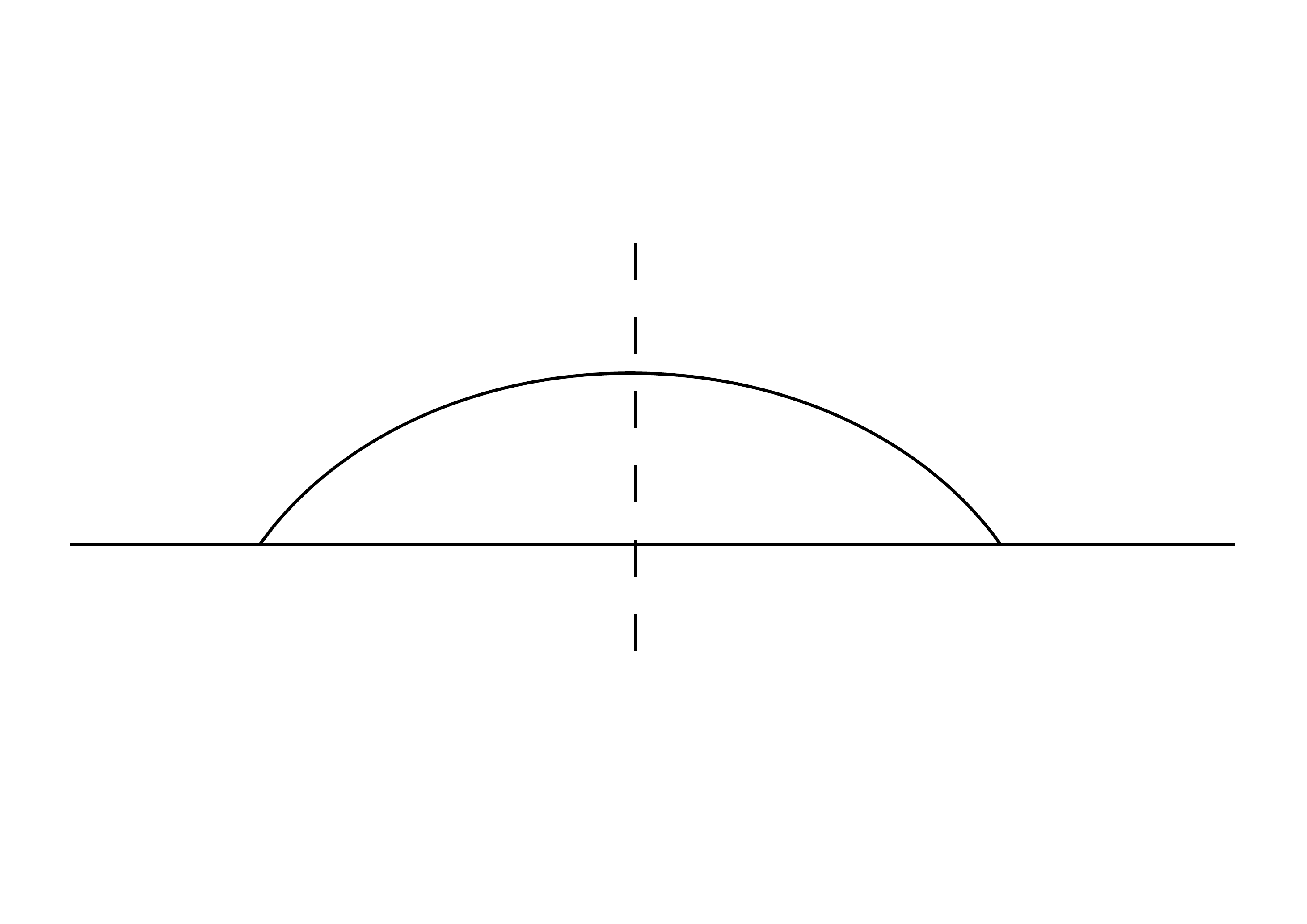}
%% Creator: Inkscape inkscape 0.48.0, www.inkscape.org
%% PDF/EPS/PS + LaTeX output extension by Johan Engelen, 2010
%% Accompanies image file 'fig-minimizer-c.pdf' (pdf, eps, ps)
%%
%% To include the image in your LaTeX document, write
%%   \input{<filename>.pdf_tex}
%%  instead of
%%   \includegraphics{<filename>.pdf}
%% To scale the image, write
%%   \def\svgwidth{<desired width>}
%%   \input{<filename>.pdf_tex}
%%  instead of
%%   \includegraphics[width=<desired width>]{<filename>.pdf}
%%
%% Images with a different path to the parent latex file can
%% be accessed with the `import' package (which may need to be
%% installed) using
%%   \usepackage{import}
%% in the preamble, and then including the image with
%%   \import{<path to file>}{<filename>.pdf_tex}
%% Alternatively, one can specify
%%   \graphicspath{{<path to file>/}}
%% 
%% For more information, please see info/svg-inkscape on CTAN:
%%   http://tug.ctan.org/tex-archive/info/svg-inkscape

\begingroup
  \makeatletter
  \providecommand\color[2][]{%
    \errmessage{(Inkscape) Color is used for the text in Inkscape, but the package 'color.sty' is not loaded}
    \renewcommand\color[2][]{}%
  }
  \providecommand\transparent[1]{%
    \errmessage{(Inkscape) Transparency is used (non-zero) for the text in Inkscape, but the package 'transparent.sty' is not loaded}
    \renewcommand\transparent[1]{}%
  }
  \providecommand\rotatebox[2]{#2}
  \ifx\svgwidth\undefined
    \setlength{\unitlength}{841.88974609pt}
  \else
    \setlength{\unitlength}{\svgwidth}
  \fi
  \global\let\svgwidth\undefined
  \makeatother
  \begin{picture}(1,0.70707072)%
    \put(0,0){\includegraphics[width=\unitlength]{fig-minimizer-c.pdf}}%
  \end{picture}%
\endgroup}
\caption{Examples of profiles of minimizers of $\mathscr{F}$ with (a) $\omega=0$ and (b) $\omega<0$.  The contact angle of the minimizer with the supporting plane is determined by the parameter $\omega$ (see Theorem $\ref{thm_young}$), while the precise shape of the profile is the consequence of a balance between the Wulff shape associated to the function $f$ and the pull downward of the gravitational potential.\label{fig-minbc}}
\end{figure}

A first difficulty in implementing this approach is the use of a non-standard form of the anisotropic symmetrization, for which the identification of equality cases presents some subtlety.  Moreover, while in the isotropic case the theory of minimal surfaces may be applied to obtain analyticity of minimizers (which is then used to obtain convexity \cite{GonzalezTamanini}), such an approach is not feasible in our setting as a consequence of the fact that non-smooth weights are included in the class of admissible surface energies.  Our arguments therefore proceed in a different manner.  In particular, the main results of our study take the following form:
\begin{itemize}
\item Identify a suitable class of symmetrizable surface tensions and characterize the equality cases in the relevant symmetrization inequality.
\item Use the direct method of the Calculus of Variations to prove existence of minimizers.
\item Prove that symmetric minimizers consist of a single convex connected component.
\item Prove that all minimizers are symmetric (and therefore convex).
\item Characterize the profile of minimizers as the solution of an ODE along with boundary conditions (corresponding to the classical Young's equation describing the contact angle between a liquid drop and a surface).  
\item Under appropriate smoothness hypotheses, show that there exists a unique minimizing shape.
\end{itemize}

We remark that once the symmetrization framework is in place, the key step in this series of arguments is to obtain the convexity of symmetric minimizers.  For this, we use a delicate construction involving fine properties of sets of finite perimeter to show that such minimizers cannot have points of local non-convexity (if such points were present, we could construct a set with smaller energy).  In particular, this argument can be viewed as expressing the balance between the surface energy's preference for convexity (the Wulff shape is convex) with the preference for mass to be ``pulled downward'' by gravitational forces.  After establishing the convexity of symmetric minimizers, the ensuing additional regularity is used as a key tool to show that all minimizers are symmetric.  We refer to the discussion in Section $\ref{sec_statements}$ for the precise statements of our results, and for further descriptions of the techniques involved.

We also point out the recent work of Koiso and Palmer \cite{PK1,PK2,PK3} where anisotropic capillarity problems have been studied for smooth surface tensions, without a gravitational term (and using different techniques).  In particular, we observe that the work \cite{PK3} makes use of symmetrization techniques (in particular, the notion of a Wulff shape having product form is analogous to the notion of symmetrizable functions $f$ given in Definition $\ref{def_symm}$ below).

\subsection*{Outline of the paper}
We now describe the outline of the remainder of the paper.  In Section $2$ below, we recall some background material and establish our notation, while in Section $3$, we give the precise statement of our results.  Sections $4$ through $7$ are then devoted to the proofs of these results.  In particular, in Section $4$ we develop a suitable form of the anisotropic symmetrization inequality (including a careful examination of the case of equality).  Continuing in Section $5$, this symmetrization result is used to establish the existence of minimizers for the variational problem, while in Section $6$ we use a delicate construction involving fine properties of sets of finite perimeter to establish regularity and convexity properties for symmetric minimizers.  Section $7$ is then devoted to the study of general minimizers, using the equality case of the anisotropic symmetrization to show that all minimizers are symmetric, and establishing the uniqueness of minimizers via ODE techniques.

\section{Preliminaries and Notation}

Throughout our discussion we let $N\geq 2$ be fixed, and for each $x\in\mathbb{R}^N$, we write
\begin{align*}
x=(x',x_N)\in\mathbb{R}^{N-1}\times \mathbb{R}.
\end{align*}
We also use the projections $\pi_1:\mathbb{R}^N\rightarrow\mathbb{R}^{N-1}$ and $\pi_N:\mathbb{R}^N\rightarrow\mathbb{R}$ given by $\pi_1(x)=x'$ and $\pi_N(x)=x_N$.

For each $k\geq 1$, we let $\mathcal{L}^k$ denote the $k$-dimensional Lebesgue measure; for $E\subset\mathbb{R}^k$, we will often write $|E|=\mathcal{L}^k(E)$.
 Likewise, for each $d\geq 1$, we will let $\mathcal{H}^d$ denote the $d$-dimensional Hausdorff measure.  Given a measure $\mu$, we let $|\mu|$ denote its total variation and $\supp \mu$ denote its support.  Finally, given a set $E\subset\mathbb{R}^N$ and $t\in\mathbb{R}$, we shall use the notation 
\begin{align}
E_t:=\{x\in\mathbb{R}^{N-1}:(x,t)\in E\}.\label{defslice}
\end{align}

Let $\Omega\subset\mathbb{R}^N$ be a given open set.  Recall that the space of functions of bounded variation is defined by
\begin{align*}
BV(\Omega):=\left\{f\in L^1(\Omega): \sup\left\{\int_{\Omega} f\ebdiv \phi dx:\phi\in C_c^1(\mathbb{R}^N;\mathbb{R}^N),|\phi|\leq 1\right\}<\infty\right\},
\end{align*}
and note that $BV(\Omega)$ is the set of functions $f$ for which the distributional derivative $Df$ is a Radon measure with $|Df|(\Omega)<\infty$.

Let $\mathcal{F}(\Omega)$ denote the collection of sets of finite perimeter in $\Omega$, i.e. sets $E\subset\Omega$ such that $\chi_E\in BV(\Omega)$.  For any set $E\in\mathcal{F}(\mathbb{R}^N)$, we let $\partial^* E$ denote the {\it reduced boundary} of $E$, consisting of the points $x\in \supp |D\chi_E|$ such that the limit
\begin{align*}
\nu_E(x):=-\lim_{\rho\downarrow 0} \frac{D\chi_E(B(x,\rho))}{|D\chi_E|(B(x,\rho))}
\end{align*}
exists in $\mathbb{R}^N$ and satisfies $|\nu_E(x)|=1$.  We shall also use the notation $\mathcal{F}_m$, $m>0$, to denote the collection 
\begin{align}
\mathcal{F}_m:=\{E\in\mathcal{F}(\{x:x_N>0\}):|E|=m\}\label{definition-Fm}
\end{align}
of admissible competitors for minimization of the functional $\mathscr{F}$ with respect to the volume constraint $|E|=m$.

In general, sets of finite perimeter may be quite degenerate.  Nevertheless, we recall the following results, which allow one to slice an arbitrary set $E$ having finite perimeter into sets $E_t$ as in ($\ref{defslice}$).  These results will be an essential tool for our analysis; they originate with the work of Vol'pert \cite{V} (for proofs of these results, we refer the reader to \cite{AFP,BCF,CCF}).
\begin{lem}
\label{lem_goodset}
Let $N\geq 1$ and suppose that $E\subset\mathbb{R}^N$ is a set of finite perimeter.  Then there exists a subset $G(E)$ of $\pi_N(E)\subset\mathbb{R}$ having full measure such that for every $t\in G(E)$,
\begin{enumerate}
\item[(i)] $E_t$ has finite perimeter,
\item[(ii)] $\mathcal{H}^{N-2}(\partial^*(E_t)\Delta (\partial^*E)_t)=0$, and
\item[(iii)] For $\mathcal{H}^{N-2}$-a.e. $x'$ with $(x',t)\in (\partial^*E)_t\cap \partial^*(E)_t$, we have $\pi_1(\nu_E(x',t))\neq 0$.
\end{enumerate}
\end{lem}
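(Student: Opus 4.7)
The plan is to construct $G(E)$ as an intersection of three full-measure subsets of $\pi_N(E)$, each arising from a different tool. Property (i) follows from the classical BV slicing (Fubini) theorem applied to $\chi_E\in BV(\mathbb{R}^N)$: for $\mathcal{L}^1$-a.e. $t$ one has $\chi_{E_t}\in BV(\mathbb{R}^{N-1})$ together with the bound
\begin{equation*}
\int_{\mathbb{R}} |D\chi_{E_t}|(\mathbb{R}^{N-1})\,dt\leq |D\chi_E|(\mathbb{R}^N)<\infty,
\end{equation*}
isolating a first full-measure set $G_1$.

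For (iii), the key tool is the coarea formula applied to the $\mathcal{H}^{N-1}$-rectifiable set $\partial^*E$ together with the $1$-Lipschitz function $\pi_N$. Since the approximate tangent plane to $\partial^*E$ at $x$ is $\nu_E(x)^\perp$, the tangential Jacobian of $\pi_N$ at $x$ equals $|\pi_1(\nu_E(x))|$, yielding
\begin{equation*}
\int_{\partial^*E}|\pi_1(\nu_E(x))|\,d\mathcal{H}^{N-1}(x)=\int_{\mathbb{R}}\mathcal{H}^{N-2}((\partial^*E)_t)\,dt<\infty.
\end{equation*}
Applying the same identity to the portion of $\partial^*E$ on which $\pi_1(\nu_E)=0$ (where the tangential Jacobian vanishes) shows that for $\mathcal{L}^1$-a.e. $t$ the subset of $(\partial^*E)_t$ on which $\pi_1(\nu_E(\cdot,t))=0$ is $\mathcal{H}^{N-2}$-null, giving (iii) on a second full-measure set $G_2$.

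The delicate point is (ii). For the inclusion $\{x'\in(\partial^*E)_t:\pi_1(\nu_E(x',t))\neq 0\}\subseteq\partial^*(E_t)$ up to $\mathcal{H}^{N-2}$-null sets, I would argue by blow-up: at $(x_0',t_0)\in\partial^*E$ with $\pi_1(\nu_E(x_0',t_0))\neq 0$, the rescalings $r^{-1}(E-(x_0',t_0))$ converge in $L^1_{\loc}$ to the half-space $H=\{y:y\cdot\nu_E(x_0',t_0)\leq 0\}$, and the nonvanishing of the horizontal component of $\nu_E(x_0',t_0)$ ensures that $H\cap\{y_N=0\}$ is itself a genuine half-space in $\mathbb{R}^{N-1}$. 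A Fubini argument applied to the symmetric difference $|r^{-1}(E-(x_0',t_0))\Delta H|$ then transfers this blow-up to one for $E_{t_0}$ at $x_0'$, yielding $x_0'\in\partial^*(E_{t_0})$. Conversely, disintegrating the horizontal components of the vector measure $D\chi_E=-\nu_E\mathcal{H}^{N-1}\llcorner\partial^*E$ along $\pi_N$ produces $D_{x'}\chi_{E_t}=-\pi_1(\nu_E(\cdot,t))\mathcal{H}^{N-2}\llcorner(\partial^*E)_t$ for $\mathcal{L}^1$-a.e. $t$, giving the reverse inclusion up to $\mathcal{H}^{N-2}$-null sets and identifying a third full-measure set $G_3$. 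Setting $G(E)=G_1\cap G_2\cap G_3$ then yields the lemma.

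The main obstacle is precisely this compatibility step in (ii): controlling the Fubini errors in the symmetric difference as $r\to 0^+$ so that the blow-up of $E$ at $(x_0',t_0)$ really descends to the blow-up of $E_{t_0}$ at $x_0'$ for $\mathcal{H}^{N-2}$-a.e. $x_0'$ on a full-measure set of $t_0$ requires careful bookkeeping of exceptional slices, and is the technical heart of Vol'pert's theorem.
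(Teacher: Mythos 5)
The paper itself contains no proof of this lemma: it is recorded as a classical slicing result of Vol'pert, and the reader is referred to \cite{V} and to \cite{AFP,BCF,CCF} for proofs. There is therefore no ``paper's own proof'' to compare against, so I will assess your sketch against the standard argument in those references.

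Your three-part decomposition is the right road map, and it is essentially what those references do. For (i), the mollification--plus--lower-semicontinuity reading of your Fubini bound is correct: mollify $\chi_E$, apply Fatou in $t$, and use $L^1$ lower semicontinuity of the total variation of slices to get $\int_{\mathbb R}|D\chi_{E_t}|(\mathbb{R}^{N-1})\,dt\leq |D_{x'}\chi_E|(\mathbb{R}^N)\leq P(E)$, so $E_t\in\mathcal F(\mathbb{R}^{N-1})$ for a.e.\ $t$. For (iii), your use of the coarea formula on the rectifiable set $\partial^*E$ with the $1$-Lipschitz map $\pi_N$ is exactly right, and the identification of the tangential coarea factor as $|\pi_1(\nu_E(x))|$ (the length of the projection of $e_N$ onto $\nu_E(x)^\perp$) is correct; restricting to $\{\pi_1(\nu_E)=0\}$ kills the integrand and shows that this set has $\mathcal{H}^{N-2}$-null slices for a.e.\ $t$. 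For (ii), blow-up at points with non-vertical normal together with a disintegration of the horizontal component of $D\chi_E$ is the standard way to get the two inclusions, and you correctly single this out as the technical heart: the real work is showing that the $L^1_{\loc}$ blow-up of $E$ at $(x_0',t_0)$ descends to an $L^1_{\loc}$ blow-up of $E_{t_0}$ at $x_0'$ for $\mathcal{H}^{N-2}$-a.e.\ $x_0'$ on a full-measure set of $t_0$, i.e.\ surviving the ``a.e.\ $t$, then $\mathcal H^{N-2}$-a.e.\ $x'$'' double quantifier with careful exceptional-set bookkeeping. As a sketch it is sound and faithful to the cited literature; the only caveat is that it remains a sketch precisely at the point you flag.
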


A fundamental tool throughout our analysis is the following case of the coarea formula, which will allow us to compute the surface energy by slices.
\begin{lem}
\label{lem_coarea}
Let $N\geq 1$ and suppose that $E\subset\mathbb{R}^N$ is a set of finite perimeter.  Then for every Borel function $g:\mathbb{R}^N\rightarrow [0,+\infty]$,
\begin{align*}
\int_{\partial^*E} g(x)|\pi_1(\nu_E(x))|d\mathcal{H}^{N-1}(x)=\int_{G(E)} \int_{\partial^*E_{t}} g(x',t)d\mathcal{H}^{N-2}(x')dt
\end{align*}
\end{lem}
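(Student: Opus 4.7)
The plan is to apply the general coarea formula for $(N-1)$-rectifiable sets to the Lipschitz projection $\pi_N\colon\mathbb{R}^N\to\mathbb{R}$ restricted to $\partial^*E$, and then use Vol'pert's theorem (Lemma \ref{lem_goodset}) to identify the slices of the reduced boundary with the reduced boundaries of the slices.

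First I would compute the tangential Jacobian of $\pi_N$ on $\partial^*E$. Since $\partial^*E$ is $(N-1)$-rectifiable and at $\mathcal{H}^{N-1}$-a.e.\ point $x\in\partial^*E$ the approximate tangent plane is the orthogonal complement of $\nu_E(x)$, the tangential gradient of $\pi_N$ at $x$ is the orthogonal projection of $\nabla\pi_N=e_N$ onto $\nu_E(x)^\perp$, namely $e_N-(e_N\cdot\nu_E(x))\nu_E(x)$. Writing $\nu_E=(\pi_1(\nu_E),\nu_E\cdot e_N)$ and using $|\nu_E|=1$, the magnitude of this tangential gradient equals
\[
\sqrt{1-(e_N\cdot\nu_E(x))^2}=|\pi_1(\nu_E(x))|,
\]
so the tangential Jacobian of $\pi_N$ on $\partial^*E$ at $x$ equals $|\pi_1(\nu_E(x))|$.

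Next I would invoke the rectifiable coarea formula (see, e.g., Federer or \cite{AFP}) applied to the $(N-1)$-rectifiable set $\partial^*E$, the Lipschitz function $\pi_N$, and the Borel integrand $g$. This yields
\[
\int_{\partial^*E} g(x)\,|\pi_1(\nu_E(x))|\,d\mathcal{H}^{N-1}(x)=\int_{\mathbb{R}}\int_{\partial^*E\,\cap\,\pi_N^{-1}(t)} g(x',t)\,d\mathcal{H}^{N-2}(x')\,dt.
\]
Identifying $\partial^*E\cap\pi_N^{-1}(t)\subset\mathbb{R}^{N-1}\times\{t\}$ with $(\partial^*E)_t\subset\mathbb{R}^{N-1}$ via $\pi_1$, the inner integral is $\int_{(\partial^*E)_t} g(x',t)\,d\mathcal{H}^{N-2}(x')$. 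Outside of $\pi_N(E)$ the slice $(\partial^*E)_t$ is empty (up to $\mathcal{H}^{N-2}$-null sets), so the outer integration can be restricted to $\pi_N(E)$, and in particular to its full-measure subset $G(E)$ provided by Lemma \ref{lem_goodset}.

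To finish, I would apply Lemma \ref{lem_goodset}(ii), which gives $\mathcal{H}^{N-2}\bigl(\partial^*(E_t)\Delta(\partial^*E)_t\bigr)=0$ for every $t\in G(E)$. Hence for such $t$,
\[
\int_{(\partial^*E)_t} g(x',t)\,d\mathcal{H}^{N-2}(x')=\int_{\partial^*E_t} g(x',t)\,d\mathcal{H}^{N-2}(x'),
\]
and substituting this into the previous identity yields the claim. The main subtleties are essentially bookkeeping: ensuring that one cites the appropriate rectifiable-set version of the coarea formula (rather than the version for Lipschitz maps on an open set), correctly computing the tangential Jacobian at $\mathcal{H}^{N-1}$-a.e.\ point of $\partial^*E$, and justifying the passage from the slice $(\partial^*E)_t$ of the reduced boundary to the reduced boundary $\partial^*E_t$ of the slice, which is precisely where Vol'pert's result is essential.
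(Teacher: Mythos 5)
The paper does not prove this lemma; it records it as a known preliminary and points to the references \cite{AFP,BCF,CCF} (and, originally, Vol'pert \cite{V}). Your argument is precisely the standard proof found there: apply the rectifiable-set coarea formula to the Lipschitz map $\pi_N$ on the $(N-1)$-rectifiable set $\partial^*E$, compute that the tangential coarea factor at $\mathcal{H}^{N-1}$-a.e.\ point equals $|\pi_1(\nu_E)|$ (your projection computation is correct: the orthogonal projection of $e_N$ onto $\nu_E^\perp$ has norm $\sqrt{1-(e_N\cdot\nu_E)^2}=|\pi_1(\nu_E)|$ since $|\nu_E|=1$), and then invoke Vol'pert's theorem (Lemma~\ref{lem_goodset}(ii)) to replace the horizontal slice $(\partial^*E)_t$ of the reduced boundary by the reduced boundary $\partial^*(E_t)$ of the slice for a.e.\ $t$. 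You also correctly flag the two places where care is needed: using the rectifiable rather than the open-set version of the coarea formula, and the $(\partial^*E)_t$ vs.\ $\partial^*(E_t)$ identification. The only point worth stating a touch more carefully is the restriction of the outer integral to $G(E)$: the coarea formula a priori integrates over all of $\mathbb{R}$, and one disposes of $\mathbb{R}\setminus\pi_N(E)$ by noting that, for a.e.\ such $t$, $E_t$ is negligible, hence $\partial^*(E_t)=\emptyset$, and then Vol'pert forces $\mathcal{H}^{N-2}\bigl((\partial^*E)_t\bigr)=0$; you state this correctly, just tersely. Overall the proposal is correct and is the same approach used in the cited sources.
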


As a consequence of Lemma $\ref{lem_coarea}$ and basic properties of sets of finite perimeter, one obtains that for any set $A$ of finite perimeter, the map $t\mapsto |A_t|$ belongs to BV.  In particular, we have the following lemma:
\begin{lem}
\label{lem_slice}
Fix $N\geq 1$.  Then for every $A\in\mathcal{F}(\{x\in\mathbb{R}^N:x_N>0\})$ there exists $v_A\in BV([0,\infty);[0,\infty])$ such that 
\begin{enumerate}
\item[(i)] $v_A(t)=|A_t|$ for almost every $t\in [0,\infty)$, and 
\item[(ii)] $v_A$ is differentiable on the set $G(A)$ given by Lemma $\ref{lem_goodset}$, with
\begin{align*}
v_A'(t)=-\int_{(\partial^*A)_{t}} \frac{\pi_N(\nu_A(x',t))}{|\pi_1(\nu_A(x',t))|}d\mathcal{H}^{N-2}(x'), \quad t\in G(A).
\end{align*}
\end{enumerate}
\end{lem}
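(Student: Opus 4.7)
The plan is to obtain $v_A$ as a BV representative of the a.e.-defined function $t\mapsto |A_t|$, identify its distributional derivative via a divergence-theorem argument, and then invoke the anisotropic coarea formula of Lemma~\ref{lem_coarea} to extract the slicewise formula for $v_A'$ on the good set $G(A)$.

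First, set $\tilde v(t):=|A_t|$ for a.e.\ $t\geq 0$ (well-defined by Fubini) and test against $\phi\in C_c^\infty((0,\infty))$. Introducing the vector field $\Phi(x):=\phi(x_N)e_N$, which is compactly supported in $\{x_N>0\}$ and satisfies $\mathrm{div}\,\Phi(x)=\phi'(x_N)$, Fubini's theorem together with the Gauss--Green identity $\int_A\mathrm{div}\,\Phi\,dx=\int_{\partial^*A}\Phi\cdot\nu_A\,d\mathcal{H}^{N-1}$ for sets of finite perimeter yields
$$\int_0^\infty \tilde v(t)\phi'(t)\,dt \;=\; \int_A \phi'(x_N)\,dx \;=\; \int_{\partial^*A}\phi(x_N)\pi_N(\nu_A(x))\,d\mathcal{H}^{N-1}(x).$$
The right-hand side is bounded by $\|\phi\|_\infty P(A)$, so $\tilde v\in BV_\loc([0,\infty))$, and its right-continuous representative gives $v_A$ (with an extension by zero across $t=0$); this establishes (i). The identity also identifies the distributional derivative of $v_A$ as the signed Radon measure $Dv_A=-(\pi_N)_{\sharp}(\pi_N(\nu_A)\,\mathcal{H}^{N-1}\lfloor\partial^*A)$ on $[0,\infty)$.

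For (ii) I would decompose $\partial^*A$ into the ``vertical'' set $V:=\{x\in\partial^*A:\pi_1(\nu_A(x))=0\}$ (where necessarily $\nu_A=\pm e_N$) and its complement, and apply Lemma~\ref{lem_coarea} on $\partial^*A\setminus V$ with $g(x):=\phi(x_N)\,\pi_N(\nu_A(x))/|\pi_1(\nu_A(x))|$. This recasts the non-vertical contribution as
$$\int_{\partial^*A\setminus V}\phi(x_N)\pi_N(\nu_A)\,d\mathcal{H}^{N-1} \;=\; \int_{G(A)}\phi(t)\int_{(\partial^*A)_t}\frac{\pi_N(\nu_A(x',t))}{|\pi_1(\nu_A(x',t))|}\,d\mathcal{H}^{N-2}(x')\,dt,$$
which exhibits the claimed slice integral as the density of the absolutely continuous part of $Dv_A$ on $G(A)$ (which has full Lebesgue measure), while the singular part of $Dv_A$ comes only from the vertical stratum $V$.

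The main obstacle is promoting this to a classical derivative at every $t_0\in G(A)$. An atom or diffuse singular contribution of $Dv_A$ at a height $t_0$ would require $\partial^*A\cap\{x_N=t_0\}$ to contain an $\mathcal{H}^{N-1}$-positive piece on which $\pi_1(\nu_A)=0$; but conditions (ii)--(iii) of Lemma~\ref{lem_goodset} force $\mathcal{H}^{N-2}$-a.e.\ point of $(\partial^*A)_{t_0}$ to lie in $\partial^*(A_{t_0})$ with $\pi_1(\nu_A)\neq 0$, which is incompatible with such a horizontal stratum. Consequently the singular part of $Dv_A$ is supported in $[0,\infty)\setminus G(A)$, and together with Lebesgue differentiation of the density from the previous step this yields the prescribed classical derivative formula on $G(A)$.
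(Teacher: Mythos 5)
Your first two steps are sound: the Gauss--Green identity correctly identifies $Dv_A$ as the pushforward of $\pi_N(\nu_A)\,\mathcal{H}^{N-1}\lfloor\partial^*A$ under $\pi_N$, and the coarea formula of Lemma~\ref{lem_coarea} applied to the non-vertical part of $\partial^*A$ correctly exhibits the slicewise integral as a density (after splitting your $g$ into positive and negative parts, since the lemma is stated for nonnegative integrands). The gap is in the last paragraph. The criterion that a singular contribution ``at a height $t_0$'' forces an $\mathcal{H}^{N-1}$-positive horizontal piece of $\partial^*A$ there --- which conditions (i)--(ii) of Lemma~\ref{lem_goodset} exclude for $t_0\in G(A)$ --- correctly rules out \emph{atoms} of $Dv_A$ at heights in $G(A)$: an atom at $t_0$ forces $\mathcal{H}^{N-1}(\partial^*A\cap\{x_N=t_0\})>0$, hence $\mathcal{H}^{N-2}\bigl((\partial^*A)_{t_0}\bigr)=\infty$, which together with $\mathcal{H}^{N-2}(\partial^*(A_{t_0}))<\infty$ from (i) is incompatible with (ii). But a diffuse (Cantor) singular part of $Dv_A$ carries zero mass at every individual height, so this pointwise test cannot see it, and the claim that $D^s v_A$ is supported in $[0,\infty)\setminus G(A)$ does not follow. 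This is not a vacuous worry: if $g$ is the inverse of the Minkowski question-mark function and $A\subset\mathbb{R}^2$ is the subgraph of $g$ over $(0,1)$, then $A$ has finite perimeter, $v_A$ equals one minus the question-mark function on $(0,1)$, and $Dv_A$ is a nontrivial continuous singular (Cantor) measure carried by a dense Lebesgue-null set.

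The fix, implicit in the references the paper cites, sidesteps localizing the singular support altogether. A one-variable $BV$ function has a good representative which is classically differentiable at a.e.\ point, with derivative equal to the density of the absolutely continuous part of its distributional derivative; this holds at every $t$ which is simultaneously a Lebesgue point of that density and a point at which $|D^s v_A|((t-h,t+h))/h\to 0$ as $h\to 0$, and the set of such $t$ has full measure. Since Lemma~\ref{lem_goodset} asserts only the \emph{existence} of some full-measure good set, you should replace $G(A)$ by its intersection with this a.e.\ set, after which your coarea identity produces the stated formula for $v_A'$. As written, your argument tries to prove differentiability at literally every $t$ in a $G(A)$ constrained only by conditions (i)--(iii), which is stronger than what your tools deliver.
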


We use this opportunity to establish some auxiliary notation which will be useful in the sequel.  In particular, for each $A\in\mathcal{F}(\{x\in\mathbb{R}^N:x_N>0\})$ we let $v_A^-$ and $v_A^+$ be the left and right continuous representatives of the function $v_A$ given by Lemma $\ref{lem_slice}$; in particular, we note the identities
\begin{align*}
v_A^-(t)=\lim_{\epsilon\downarrow 0} \overline{v}_A(t-\epsilon),\quad v_A^+(t)=\lim_{\epsilon\downarrow 0} \overline{v}_A(t+\epsilon)
\end{align*}
for all good representatives $\overline{v}$ of $v$ and every $t\in [0,\infty)$ (see \cite[pg. 136]{AFP} for the notion of good representative of a $BV$ function in this context).  Moreover, we will also make use of the quantities 
\begin{align}
r_A^\pm(t):=(v_A^{\pm}(t)/|K_h|)^\frac{1}{N-1}.\label{eqrdef}
\end{align}

Equipped with this notation, we remark that the convexity of $K$ implies that $v_K$ is continuous with $v_K(t)=v_K^-(t)=v_K^+(t)$.  To aid the clarity of our exposition, we will use the notation $r_A(t):=r_A^+(t)$ when the continuity properties of the representative are not relevant.

\section{Statement of main results}
\label{sec_statements}

We now give the precise statements of our results.  As we described in the introduction, our goal is to apply symmetrization techniques to the study of minimizers of $\mathscr{F}$.  Since the Wulff shape may be any open convex set in general, we must restrict ourselves to an appropriate class of surface tension functionals which possess a suitable notion of symmetry.  In particular, we will restrict our study to {\it symmetrizable} sets in the following sense:
\begin{defin}[Symmetrizability]
\label{def_symm}
Let $f:\mathbb{R}^N\rightarrow \mathbb{R}^+$ be convex and positively $1$-homogeneous, with $f(x)>0$ for $|x|>0$.  
We say that $f$ is {\it symmetrizable} if there exist lower-semicontinuous functions $h:\mathbb{R}^{N-1}\rightarrow [0,\infty)$ and $\phi:[0,\infty)\times\mathbb{R}\rightarrow [0,\infty)$ such that 
\begin{itemize}
\item $h$ is positively $1$-homogeneous, convex, and satisfies $h(x')>0$ for $|x'|>0$,
\item $\phi$ is convex,
and
\item the identity $f(x)=\phi(h(x'),x_N)$ holds for every $x=(x',x_N)\in\mathbb{R}^{N-1}\times\mathbb{R}=\mathbb{R}^{N}$.
\end{itemize}
\end{defin}
\begin{rem}
Suppose that $f$ is symmetrizable with $f(x)=\phi(h(x'),x_N)$.  Then $\phi$ is positively $1$-homogeneous on $\Rng(h)\times\mathbb{R}$.  Indeed, for every $(s,t)\in\Rng(h)\times\mathbb{R}$ and $\lambda>0$ we choose $\xi\in\mathbb{R}^{N-1}$ such that $h(\xi)=s$ and therefore obtain
\begin{align*}
\phi(\lambda s,\lambda t)&=\phi(\lambda h(\xi),\lambda t)=\phi(h(\lambda \xi),\lambda t)=f\big(\lambda(\xi,t)\big)\\
&=\lambda f(\xi,t)=\lambda \phi(h(\xi),t)=\lambda \phi(s,t).
\end{align*}
\end{rem}
\begin{exa} If $f(x)=|x|_p$ with $p>1$, then we may take $\phi(a,b)=|(a,b)|_p=(|a|^p+|b|^p)^\frac{1}{p}$ and $h(x)=|x|_p$, so that $\phi$ is convex and lower-semicontinuous.  Moreover, for every $x\in\mathbb{R}^N$, we have
\begin{align*}
\phi(f(\pi_1(x),0),\pi_N(x))&=(|f(\pi_1(x),0)|^p+|\pi_N(x)|^p)^\frac{1}{p}\\
&=(\sum_{i=1}^{N-1} |x_i|^p+|x_N|^p)^\frac{1}{p}=|x|_p=f(x).
\end{align*}
\end{exa}

The notion of symmetrizability established in Definition $\ref{def_symm}$ corresponds to asking that the Wulff shape $K_f$ is axially symmetric with respect to an open convex set $K_h\subset \mathbb{R}^{N-1}$.  Indeed, this is the content of our next lemma, where we show that the set $K_h$ is exactly the Wulff shape corresponding to the function $h$.
\begin{lem}
\label{lemsymmchar}
Let $f$ be symmetrizable with $f(x)=\phi(h(x'),x_N)$, and let $K\subset \mathbb{R}^N$ be the Wulff shape associated to $f$.  Then there exists $\alpha:\mathbb{R}\rightarrow [0,\infty)$ concave on $\{t:\alpha(t)>0\}$ such that
\begin{enumerate}
\item[(i)] $K_t=\emptyset$ for every $t\in\mathbb{R}$ with $\alpha(t)=0$, and
\item[(ii)] $K_t=\alpha(t)K_h$ for every $t\in\mathbb{R}$ with $\alpha(t)\neq 0$,
\end{enumerate}
where $K_t$ is defined as in ($\ref{defslice}$) and where $K_h$ denotes the Wulff shape in $\mathbb{R}^{N-1}$ corresponding to the function $h$.

Conversely, given an arbitrary lower-semicontinuous, positively $1$-homogeneous, convex function $f$ satisfying $f(x)>0$ for $|x|>0$, if there exists an open convex set $K_h\subset\mathbb{R}^{N-1}$ such that the Wulff shape $K$ for $f$ satisfies $(i)$ and $(ii)$, then $f$ is symmetrizable.
\end{lem}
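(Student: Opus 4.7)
The strategy is to exploit the duality between bounded open convex sets containing the origin and their support functions: the Wulff shape $K=K_f$ has $f$ as its support function, and each slice $K_t\subset\mathbb{R}^{N-1}$ is itself an open convex set whose support function can be read off from $f$. For the forward direction I would compute this slice support function explicitly from $f(x)=\phi(h(x'),x_N)$, factor out $h(\nu')$ using the 1-homogeneity of $\phi$ on $\Rng(h)\times\mathbb{R}$, and recognize the result as a scalar multiple of $h$. For the converse I would run the computation in reverse, constructing $\phi$ as the support function of the one-dimensional profile $\{(\alpha(x_N),x_N):\alpha(x_N)>0\}\subset\mathbb{R}^2$.

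For the forward direction, the defining inequalities $x\cdot\nu<f(\nu)$ split according to whether $\nu'=0$ or $\nu'\neq 0$. The $\nu'=0$ case forces $t\in(-f(-e_N),f(e_N))$, outside of which $K_t=\emptyset$. For $\nu'\neq 0$ one has $h(\nu')>0$, and since positive 1-homogeneity combined with $h>0$ off the origin yields $\Rng(h)=[0,\infty)$, the substitution $u=\nu_N/h(\nu')$ gives
\begin{equation*}
\inf_{\nu_N\in\mathbb{R}}\bigl[\phi(h(\nu'),\nu_N)-t\nu_N\bigr] \;=\; h(\nu')\,\tilde\alpha(t),\qquad \tilde\alpha(t):=\inf_{u\in\mathbb{R}}\bigl[\phi(1,u)-tu\bigr].
\end{equation*}
As an infimum of affine functions of $t$, the function $\tilde\alpha$ is concave, so setting $\alpha(t):=\max(\tilde\alpha(t),0)$ gives a function concave on its positivity set. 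The slice then takes the form
\begin{equation*}
K_t = \bigcap_{\nu'\neq 0}\bigl\{x'\in\mathbb{R}^{N-1} : x'\cdot\nu' < \alpha(t)\,h(\nu')\bigr\},
\end{equation*}
which is precisely the open dilate $\alpha(t)K_h$ when $\alpha(t)>0$ and is empty when $\alpha(t)=0$ (taking $\nu'=x'$ would force $|x'|^2<0$).

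For the converse, given $K$ with slices of the stated form, I would define
\begin{equation*}
\phi(s,r):=\sup_{x_N\in\mathbb{R}}\bigl[\alpha(x_N)\,s + x_N\,r\bigr],\qquad s\in[0,\infty),\ r\in\mathbb{R}.
\end{equation*}
As a supremum of linear forms, $\phi$ is automatically convex, lower semicontinuous, and positively 1-homogeneous; finiteness follows from the boundedness of $K$ (which makes $\alpha$ compactly supported), and nonnegativity from taking $x_N=0$, where $\alpha(0)>0$ because $0\in K$. The support-function formula then yields
\begin{equation*}
f(\nu',\nu_N)=\sup_{(x',x_N)\in K}\bigl[x'\cdot\nu'+x_N\nu_N\bigr] = \sup_{x_N:\alpha(x_N)>0}\bigl[\alpha(x_N)h(\nu')+x_N\nu_N\bigr] = \phi(h(\nu'),\nu_N),
\end{equation*}
using $\sup_{x'\in\alpha(x_N)K_h}x'\cdot\nu'=\alpha(x_N)h(\nu')$ on each positive slice. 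The main subtlety I expect is the open/closed bookkeeping in the forward direction: one must verify that the intersection of open half-spaces displayed above coincides with the \emph{open} dilate $\alpha(t)K_h$ and not with its closure. This is a direct consequence of the standard representation of a bounded open convex set containing the origin as the intersection of its open supporting half-spaces, so the genuine content of the proof is concentrated in the infimum computation, which is exactly where the symmetrizability hypothesis enters in an essential way.
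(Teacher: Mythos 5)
Your overall strategy coincides with the paper's: in the forward direction compute the slice-support function via $\inf_{\nu_N}[\phi(h(\nu'),\nu_N)-t\nu_N]=h(\nu')\tilde\alpha(t)$, and in the converse define $\phi$ as the support function of the profile $\{(\alpha(y_N),y_N)\}$, exactly the paper's $\phi(s,t)=\sup_{y_N\in\pi_N(K)}[s\alpha(y_N)+ty_N]$. Two details need repair, though.

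First, your displayed formula $K_t=\bigcap_{\nu'\neq 0}\{x':x'\cdot\nu'<\alpha(t)h(\nu')\}$ silently drops the $\nu'=0$ constraints you had correctly identified one sentence earlier, namely $t\in(-f(-e_N),f(e_N))$. This matters: with $\alpha:=\max(\tilde\alpha,0)$ one can have $\alpha(t)>0$ at an endpoint of that interval while $K_t=\emptyset$, so (ii) fails there. Concretely, take $\phi(s,r)=s+|r|$ (with any admissible $h$, say $h=|\cdot|$): then $\tilde\alpha(t)=\inf_u[1+|u|-tu]=1$ for $|t|\leq 1$, so $\alpha(1)=1>0$, yet $K_1=\emptyset$ since $(x',1)\in K$ forces $1<f(e_N)=1$. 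The fix is to declare in addition $\alpha(t)=0$ for $t\notin\pi_N(K)$; this is consistent with concavity on $\{\alpha>0\}$ and then your argument goes through. (The paper's own definition $\alpha(t)=\inf_{y_N\in\pi_N(K)}\max\{\phi(1,y_N)-ty_N,0\}$ has the identical endpoint defect, and the paper dismisses the verification as ``straightforward calculations,'' so you have in fact supplied more of the argument than the paper.) Second, in the converse your definition $\phi(s,r):=\sup_{x_N\in\mathbb{R}}[\alpha(x_N)s+x_Nr]$ is $+\infty$ for every $r\neq 0$: compact support of $\alpha$ controls $\alpha(x_N)s$ outside $\pi_N(K)$ but leaves $x_Nr$ unbounded as $x_N\to\pm\infty$. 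You clearly intend $\sup_{x_N\in\pi_N(K)}$ (equivalently over $\{\alpha>0\}$), as your final display and the paper's formula confirm; just make the definition match.
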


\begin{proof}
We begin with the second statement.  Suppose that $f$ and $K_h$ are given such that $(i)$ and $(ii)$ hold for some $\alpha:\mathbb{R}\rightarrow [0,\infty)$ as above, and let $h:\mathbb{R}^{N-1}\rightarrow [0,\infty)$ be the unique positively $1$-homogeneous convex function with Wulff shape $K_h$ (obtained via the characterization $h(\nu)=\sup \{x\cdot \nu:x\in K_h\}$ for $\nu\in \mathbb{R}^{N-1}$).  Then for every $x=(x',x_N)\in\mathbb{R}^{N-1}\times\mathbb{R}$, we have
\begin{align*}
f(x)&=\sup \bigg\{\alpha(y_N)h(x')+x_Ny_N:y_N\in\pi_N(K)\bigg\},
\end{align*}
Defining $\phi(s,t)=\sup_{y_N\in\pi_N(K)} [s\alpha(y_N)+ty_N]$ for each $s,t\in\mathbb{R}$, we therefore obtain $f(x)=\phi(h(x'),x_N)$ for every $x=(x',x_N)\in\mathbb{R}^{N-1}\times\mathbb{R}$.

Conversely, suppose that $f$ is symmetrizable.  For each $t\in\mathbb{R}$, define
\begin{align*}
\alpha(t)=\inf_{y_N\in\pi_N(K)} \max\{\phi(1,y_N)-ty_N,0\}.
\end{align*}
Straightforward calculations now give the desired concavity for $\alpha$ along with the conditions $(i)$ and $(ii)$.
\end{proof}

We also introduce some further technical restrictions on $f$ which appear in our arguments.  In particular, we define the following notion of admissibility:
\begin{defin}[Admissibility]
\label{def_adm}
Let $f:\mathbb{R}^N\rightarrow \mathbb{R}^+$ be convex and positively $1$-homogeneous, with $f(x)>0$ for $|x|>0$.  
We say $f$ is {\it admissible} if $f$ is symmetrizable in the sense of Definition $\ref{def_symm}$ and, writing
\begin{align*}
f=\phi(h(x'),x_N),
\end{align*}
the function $\phi$ is strictly convex, and $C^1$ in a neighborhood of $(0,\pm 1)$, with $\partial_1 \phi(0,\pm 1)=0$.
\end{defin}

\begin{figure}
\centering
\def\svgwidth{0.72\columnwidth}
%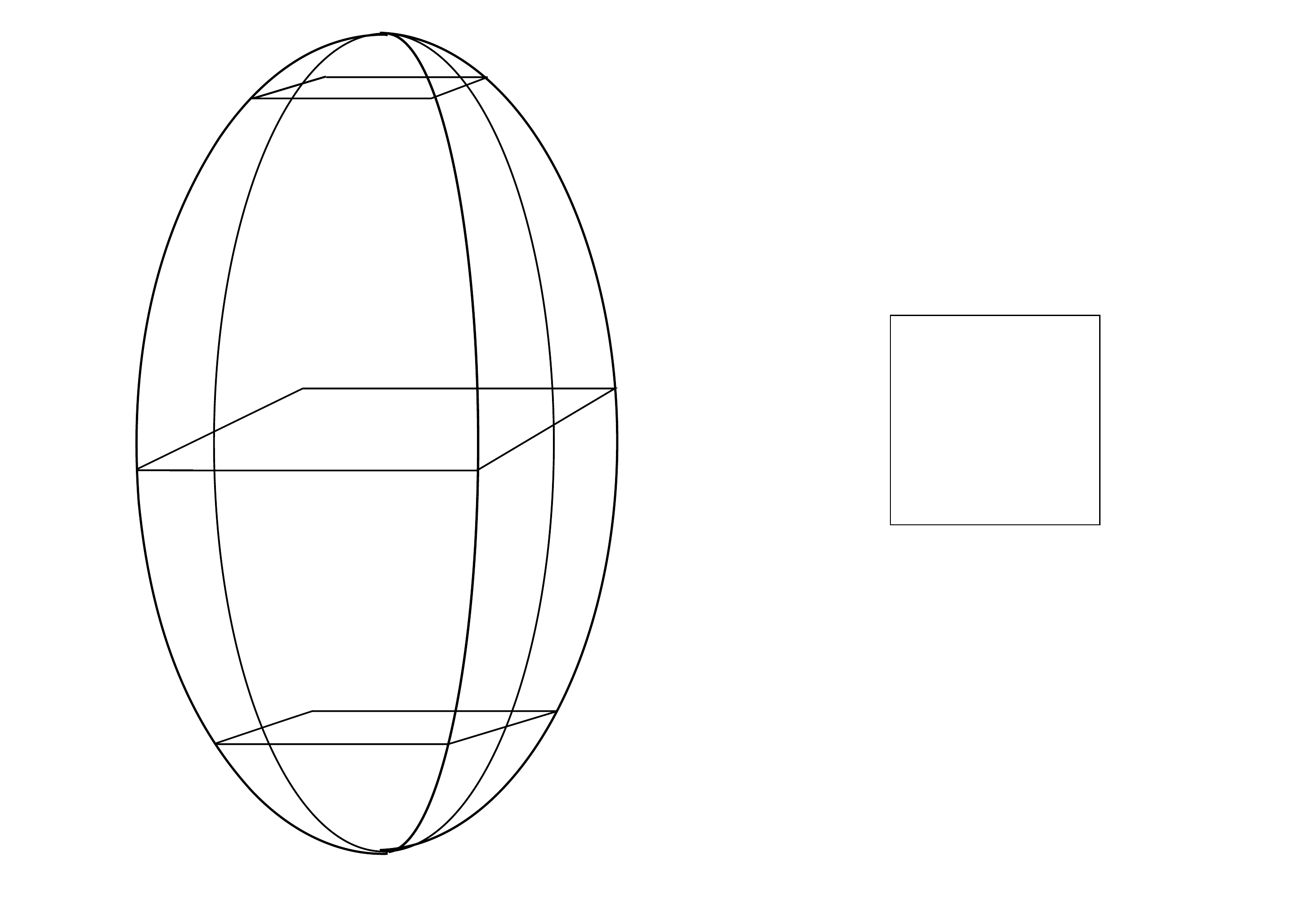
%% Creator: Inkscape inkscape 0.48.0, www.inkscape.org
%% PDF/EPS/PS + LaTeX output extension by Johan Engelen, 2010
%% Accompanies image file 'fig-admis.pdf' (pdf, eps, ps)
%%
%% To include the image in your LaTeX document, write
%%   \input{<filename>.pdf_tex}
%%  instead of
%%   \includegraphics{<filename>.pdf}
%% To scale the image, write
%%   \def\svgwidth{<desired width>}
%%   \input{<filename>.pdf_tex}
%%  instead of
%%   \includegraphics[width=<desired width>]{<filename>.pdf}
%%
%% Images with a different path to the parent latex file can
%% be accessed with the `import' package (which may need to be
%% installed) using
%%   \usepackage{import}
%% in the preamble, and then including the image with
%%   \import{<path to file>}{<filename>.pdf_tex}
%% Alternatively, one can specify
%%   \graphicspath{{<path to file>/}}
%% 
%% For more information, please see info/svg-inkscape on CTAN:
%%   http://tug.ctan.org/tex-archive/info/svg-inkscape

\begingroup
  \makeatletter
  \providecommand\color[2][]{%
    \errmessage{(Inkscape) Color is used for the text in Inkscape, but the package 'color.sty' is not loaded}
    \renewcommand\color[2][]{}%
  }
  \providecommand\transparent[1]{%
    \errmessage{(Inkscape) Transparency is used (non-zero) for the text in Inkscape, but the package 'transparent.sty' is not loaded}
    \renewcommand\transparent[1]{}%
  }
  \providecommand\rotatebox[2]{#2}
  \ifx\svgwidth\undefined
    \setlength{\unitlength}{841.88974609pt}
  \else
    \setlength{\unitlength}{\svgwidth}
  \fi
  \global\let\svgwidth\undefined
  \makeatother
  \begin{picture}(1,0.70707072)%
    \put(0,0){\includegraphics[width=\unitlength]{fig-admis.pdf}}%
    \put(0.70860993,0.22941581){\color[rgb]{0,0,0}\makebox(0,0)[lb]{\smash{$K_h$}}}%
    \put(0.38552722,0.65159534){\color[rgb]{0,0,0}\makebox(0,0)[lb]{\smash{$K_t=\alpha(t)K_h$
}}}%
  \end{picture}%
\endgroup
\caption{An example of the Wulff shape associated to an admissible function $f:\mathbb{R}^3\rightarrow\mathbb{R}^+$.  The set is axially symmetric with respect to the axis $\{(x',x_3):x'=0\}$ and the set $K_h\subset\mathbb{R}^2$.  The set $K_h$ is the Wulff shape associated to the function $h$.}
\end{figure}

The convexity and smoothness assumptions on $\phi$ in the notion of admissibility are important technical assumptions which are used in our construction of competitors for minimality.  In particular, these assumptions imply that the Wulff shape $K$ associated to $f$ is smooth and ``flat'' at the top and bottom (see also the discussion in Section $\ref{sec_construction}$ below).  Equipped with the notion of admissibility, we are now ready to state the main results of our study.  We begin with the notion of anisotropic symmetrization,
\begin{defin}[Anisotropic symmetrization]
\label{defsymm}
Let $f:\mathbb{R}^N\rightarrow \mathbb{R}^+$ be convex and positively $1$-homogeneous, with $f(x)>0$ for $|x|>0$.  Suppose that $f$ is admissible in the sense of Definition $\ref{def_adm}$, $f(x)=\phi(h(x'),x_N)$, and let $K_h\subset \mathbb{R}^{N-1}$ be the Wulff shape associated to $h$.  For each set of finite perimeter $A\subset\mathbb{R}^N$, we define the anisotropic symmetrization $A^*$ of $A\subset\mathbb{R}^N$ by
\begin{align*}
A^*&=\{(x',t):x'\in \left(v_A(t)/|K_h|\right)^\frac{1}{N-1}K_h, t\in \mathbb{R}\},
\end{align*}
where $v_A$ is as in Lemma $\ref{lem_slice}$.
\end{defin}

Note that Lemma $\ref{lem_slice}$ can be combined with the characterization of functions of bounded variation by sections (c.f. \cite[Remark 3.104]{AFP}) to show that the symmetrization $A\mapsto A^*$ preserves the property of being a set of finite perimeter. 

This symmetrization was introduced in \cite{JS}, and may be seen as the analogue of Steiner symmetrization (with codimension $N-1$) for the convex symmetrization of Alvino, Ferone, Trombetti and Lions in \cite{AVTL}.  In the context of the functional $\mathscr{F}$, our main symmetrization result is the following theorem, which establishes the relationship between the anisotropic symmetrization and the functional $\mathscr{F}$.
\begin{thm}[Symmetrization inequality for $\mathscr{F}$]
\label{thm_symm_f}
Suppose that $f:\mathbb{R}^{N}\rightarrow\mathbb{R}^+$ is admissible, and fix $\omega\in(-f(e_N),f(-e_N))$.  Then for any set of finite perimeter $A\subset \mathbb{R}^{N-1}\times [0,\infty)$, the set $A^*$ has finite perimeter and
\begin{align}
\mathscr{F}(A^*)\leq \mathscr{F}(A).\label{eqsymmgoal}
\end{align}
\end{thm}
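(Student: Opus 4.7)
The plan is to treat each component of $\mathscr{F}=\mathscr{F}_s+\mathscr{F}_c+\mathscr{F}_p$ separately, with the identity $|A^*_t|=|A_t|=v_A(t)$ (holding for a.e.\ $t$) carrying most of the work on the lower-order terms. The potential term $\mathscr{F}_p(E)=\int_0^\infty t|E_t|\,dt$ depends only on $v_E$ by Fubini, so $\mathscr{F}_p(A^*)=\mathscr{F}_p(A)$. For the contact term, the BV trace theory on the half-space gives $\mathcal{H}^{N-1}(\partial^*E\cap\{x_N=0\})=v^+_E(0)$, which depends only on $v_E$ and is therefore preserved: $\mathscr{F}_c(A^*)=\mathscr{F}_c(A)$. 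What remains is the main inequality $\mathscr{F}_s(A^*)\leq \mathscr{F}_s(A)$.

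To address $\mathscr{F}_s$, I would split $\partial^*A\cap\{x_N>0\}$ into the horizontal piece $H_\pm=\{x:\nu_A(x)=\pm e_N\}$ and its complement, noting that symmetrizability gives $f(\pm e_N)=\phi(0,\pm 1)$ on $H_\pm$. On the non-horizontal complement, the identity $f(\nu)/|\pi_1(\nu)|=\phi(h(\nu_{A_t}),\pi_N(\nu)/|\pi_1(\nu)|)$, obtained from symmetrizability and the $1$-homogeneity of $\phi$ on $\Rng(h)\times\mathbb{R}$, combined with Lemma~\ref{lem_coarea} and Lemma~\ref{lem_goodset}(iii), expresses the non-horizontal part of $\mathscr{F}_s(A)$ as the iterated integral
\[
\int_{G(A)}\int_{\partial^*A_t}\phi\!\left(h(\nu_{A_t}(x')),\,\tfrac{\pi_N(\nu_A(x',t))}{|\pi_1(\nu_A(x',t))|}\right)d\mathcal{H}^{N-2}(x')\,dt.
\]

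For each $t\in G(A)$ the key slice-wise bound comes from the sublinearity of $\phi$ (a consequence of convexity and positive $1$-homogeneity), which yields the Jensen-type inequality $\int\phi(F_1,F_2)\,d\mu\geq \phi\!\left(\int F_1\,d\mu,\int F_2\,d\mu\right)$ for any nonnegative Radon measure $\mu$. Combined with Lemma~\ref{lem_slice}(ii), which identifies $-\int_{\partial^*A_t}\pi_N(\nu_A)/|\pi_1(\nu_A)|\,d\mathcal{H}^{N-2}=v_A'(t)$, this gives the slice-wise lower bound $\phi(P_h(A_t),-v_A'(t))$, where $P_h(A_t):=\int_{\partial^*A_t}h(\nu_{A_t})\,d\mathcal{H}^{N-2}$. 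The anisotropic Wulff inequality in $\mathbb{R}^{N-1}$ then yields $P_h(A_t)\geq (N-1)|K_h|^{1/(N-1)}v_A(t)^{(N-2)/(N-1)}=:P(v_A(t))$. By the admissibility assumption $\partial_1\phi(0,\pm 1)=0$ together with $1$-homogeneity, the map $\phi(\cdot,\tau)$ is convex with vanishing derivative at the origin, hence nondecreasing on $[0,\infty)$, so I may substitute $\phi(P_h(A_t),-v_A'(t))\geq \phi(P(v_A(t)),-v_A'(t))$. A direct parametrization of the lateral boundary of $A^*$ by $(r_A(t)\xi,t)$ for $\xi\in\partial K_h$, together with the support identity $\xi\cdot\nu_{K_h}(\xi)=h(\nu_{K_h}(\xi))$ and $1$-homogeneity of $\phi$, then recovers $\int \phi(P(v_A(t)),-v_A'(t))\,dt$ as precisely the non-horizontal contribution to $\mathscr{F}_s(A^*)$.

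The remaining step---and the main technical difficulty---is to compare the horizontal contributions in $\mathscr{F}_s(A)$ and $\mathscr{F}_s(A^*)$. Using the divergence theorem, one has $Dv_A=-\pi_{N,*}(\nu_N\mathcal{H}^{N-1}|_{\partial^*A})$; the singular part $D^sv_A$ is therefore supported on the projection of the horizontal portion of $\partial^*A$, and its Jordan decomposition satisfies $(D^sv_A)^+(\mathbb{R})\leq \mathcal{H}^{N-1}(H_-)$ and $(D^sv_A)^-(\mathbb{R})\leq \mathcal{H}^{N-1}(H_+)$, with strict inequality possible when $A$ contains cancelling upward- and downward-facing horizontal layers at the same height. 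By direct inspection of the flat ledges of $A^*$ arising from the jump part (and more delicately, the Cantor part) of $v_A$, the horizontal contribution to $\mathscr{F}_s(A^*)$ equals $\phi(0,1)(D^sv_A)^-(\mathbb{R})+\phi(0,-1)(D^sv_A)^+(\mathbb{R})$, which is therefore bounded by the horizontal contribution $\phi(0,1)\mathcal{H}^{N-1}(H_+)+\phi(0,-1)\mathcal{H}^{N-1}(H_-)$ to $\mathscr{F}_s(A)$. Combining with the lateral slice-wise estimate delivers (\ref{eqsymmgoal}).
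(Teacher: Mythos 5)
Your slice-wise estimate (coarea formula, Jensen's inequality, the Wulff inequality in $\mathbb{R}^{N-1}$, and the monotonicity of $\phi(\cdot,\tau)$ coming from admissibility) is essentially the content of the paper's Lemma~\ref{lem_symm}, so the core of the argument is shared. Where you diverge is in how the slice-wise inequality is upgraded to $\mathscr{F}_s(A^*)\leq\mathscr{F}_s(A)$. The paper first approximates $A$ by polyhedral sets $A_n$ possessing no vertical normals (citing \cite{Cerf}); for such sets $v_{A_n}$ has no singular part and neither $A_n$ nor $A_n^*$ has horizontal boundary in $\{x_N>0\}$, so the coarea formula captures the entirety of $\mathscr{F}_s(A_n)$ and $\mathscr{F}_s(A_n^*)$, and the slice-wise estimate applies cleanly; the conclusion for general $A$ then follows from $|A_n^*\Delta A^*|\to 0$ and Reshetnyak lower-semicontinuity. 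You instead try a direct decomposition of $\partial^*A$ into horizontal and non-horizontal pieces and compare the horizontal contributions through the Jordan decomposition of $D^s v_A$. These are genuinely different routes; yours avoids the approximation machinery but must then confront the structure of the singular part of $v_A$ head-on.

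The gap is at precisely the step you hedge: the claimed equality that the horizontal contribution to $\mathscr{F}_s(A^*)$ is $\phi(0,1)(D^sv_A)^-(\mathbb{R})+\phi(0,-1)(D^sv_A)^+(\mathbb{R})$. From the pushforward identity $Dv_{A^*}=-(\pi_N)_*\bigl(\pi_N(\nu_{A^*})\,\mathcal{H}^{N-1}\llcorner\partial^*A^*\bigr)$ one obtains, as for any set, only the inequalities $\mathcal{H}^{N-1}(\{\nu_{A^*}=\pm e_N\}\cap\{x_N>0\})\geq(D^sv_A)^\mp\bigl((0,\infty)\bigr)$, and these point the wrong way, since any decomposition of a signed measure as a difference of nonnegative measures dominates the Jordan decomposition. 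To upgrade them to equalities you must show that for the symmetrized set the vertical-normal pieces $\{\nu_{A^*}=e_N\}$ and $\{\nu_{A^*}=-e_N\}$ project under $\pi_N$ onto $\mathcal{H}^{N-1}$-essentially disjoint Borel subsets of $(0,\infty)$, i.e.\ that no cancellation occurs in $A^*$. This is geometrically plausible for a stacked set with convex cross-sections, and is established in the Steiner-symmetrization literature the paper cites (\cite{CCF}, \cite{BCF}), but it is a substantive structure theorem for the reduced boundary of cylindrical sets with $BV$ radial profile, and the Cantor part of $D^sv_A$ is exactly where it requires care. ``Direct inspection of flat ledges'' handles the jump part; for the Cantor part there are no ledges to inspect, and the assertion needs a real proof. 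The paper's polyhedral approximation is designed so that this issue never arises.
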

A key step in the proof of Theorem $\ref{thm_symm_f}$ is to show that the anisotropic symmetrization leads to a decrease in the surface energy $\mathscr{F}_s$.  We remark that such a result is present in \cite{JS}, where the author proceeds by approximation of $\chi_A$ using functions in $W^{1,1}$.  Since the case of equality will be essential to our identification of minimizers, we pursue a different presentation based on an approach to the isoperimetric problem via Fubini's theorem (see for instance \cite{DeGiorgi}, as well as \cite{BCF,CCF,Fusco,Maggi}).  In particular this approach is based on computing the symmetrization by slices, and has the benefit of allowing easier access to geometric properties of minimizers.

\begin{figure}
\centering
\def\svgwidth{0.72\columnwidth}
%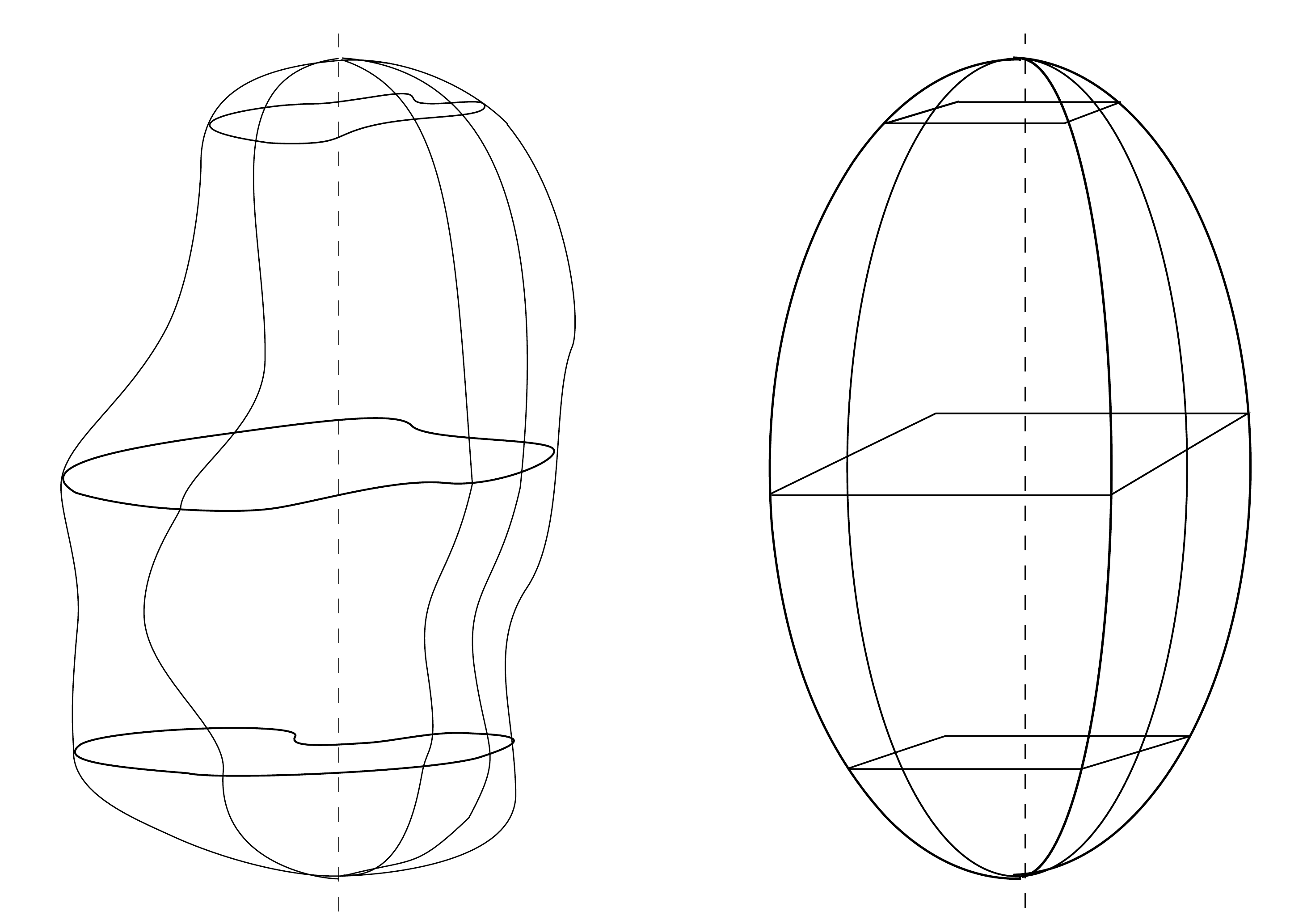
%% Creator: Inkscape inkscape 0.48.0, www.inkscape.org
%% PDF/EPS/PS + LaTeX output extension by Johan Engelen, 2010
%% Accompanies image file 'fig-symm-revised.pdf' (pdf, eps, ps)
%%
%% To include the image in your LaTeX document, write
%%   \input{<filename>.pdf_tex}
%%  instead of
%%   \includegraphics{<filename>.pdf}
%% To scale the image, write
%%   \def\svgwidth{<desired width>}
%%   \input{<filename>.pdf_tex}
%%  instead of
%%   \includegraphics[width=<desired width>]{<filename>.pdf}
%%
%% Images with a different path to the parent latex file can
%% be accessed with the `import' package (which may need to be
%% installed) using
%%   \usepackage{import}
%% in the preamble, and then including the image with
%%   \import{<path to file>}{<filename>.pdf_tex}
%% Alternatively, one can specify
%%   \graphicspath{{<path to file>/}}
%% 
%% For more information, please see info/svg-inkscape on CTAN:
%%   http://tug.ctan.org/tex-archive/info/svg-inkscape

\begingroup
  \makeatletter
  \providecommand\color[2][]{%
    \errmessage{(Inkscape) Color is used for the text in Inkscape, but the package 'color.sty' is not loaded}
    \renewcommand\color[2][]{}%
  }
  \providecommand\transparent[1]{%
    \errmessage{(Inkscape) Transparency is used (non-zero) for the text in Inkscape, but the package 'transparent.sty' is not loaded}
    \renewcommand\transparent[1]{}%
  }
  \providecommand\rotatebox[2]{#2}
  \ifx\svgwidth\undefined
    \setlength{\unitlength}{841.88974609pt}
  \else
    \setlength{\unitlength}{\svgwidth}
  \fi
  \global\let\svgwidth\undefined
  \makeatother
  \begin{picture}(1,0.70707072)%
    \put(0,0){\includegraphics[width=\unitlength]{fig-symm-revised.pdf}}%
    \put(0.40181713,0.05429958){\color[rgb]{0,0,0}\makebox(0,0)[lb]{\smash{$A$}}}%
    \put(0.91494842,0.05701459){\color[rgb]{0,0,0}\makebox(0,0)[lb]{\smash{$A^*$}}}%
  \end{picture}%
\endgroup
\caption{The symmetrization defined in Definition $\ref{defsymm}$.  Each slice $A_t=\{(x',t):(x',t)\in A\}$ of the original set $A$ is mapped to the rescaled Wulff shape $A_t^*=\{(x',t):x'\in \left(\frac{v_A(t)}{|K_h|}\right)^{1/(N-1)}K_h\}$ which satisfies $|A_t|=|A_t^*|$.}
\label{figsymm}
\end{figure}

\vspace{0.2in} 

With this symmetrization result in hand, we now turn to the main topic of our investigation, the study of minimizers of $\mathscr{F}$.  Our first result concerns the existence of minimizers.
\begin{thm}[Existence of minimizers]
\label{thm_exist}
Fix $m>0$, $\omega\in (-f(e_N),f(-e_N))$ and let $f$ be given admissible with $f(x)=\phi(h(x'),x_N)$.  Then there exists a minimizer $E_0\in \mathcal{F}_m$ for $\mathscr{F}$ among sets in $\mathcal{F}_m$, with $\mathcal{F}_m$ given by (\ref{definition-Fm}).
\end{thm}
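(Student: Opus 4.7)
The plan is to apply the direct method of the Calculus of Variations, using Theorem~\ref{thm_symm_f} as the crucial compactification device. I fix a minimizing sequence $\{E_k\}_k \subset \mathcal{F}_m$ and first check that $\mathscr{F}$ is bounded below on $\mathcal{F}_m$: by admissibility there exists $c>0$ with $f(\nu) \geq c|\nu|$, the gravitational potential is nonnegative, and the (possibly negative) contact term is controlled by $\mathscr{F}_s$ via the constraint $\omega \in (-f(e_N), f(-e_N))$ (which is precisely the sharp condition under which the combined boundary functional $\mathscr{F}_s + \mathscr{F}_c$ remains coercive on the perimeter). Next, I replace each $E_k$ by its anisotropic symmetrization $E_k^*$; by Theorem~\ref{thm_symm_f}, $E_k^* \in \mathcal{F}_m$ and $\mathscr{F}(E_k^*) \leq \mathscr{F}(E_k)$, so it suffices to extract a convergent subsequence from $\{E_k^*\}_k$.

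The second step is to establish compactness from the bound $\mathscr{F}(E_k^*) \leq C$. The lower bound $f(\nu) \geq c|\nu|$, combined with the coercivity estimate from the first paragraph, yields a uniform perimeter bound $P(E_k^*; \mathbb{R}^N) \leq C'$. The gravitational term controls the vertical tail via Chebyshev: $|E_k^* \cap \{x_N > R\}| \leq \mathscr{F}_p(E_k^*)/R \leq C/R$. For horizontal confinement I use the symmetric structure: each slice $(E_k^*)_t$ is the dilate $r_{E_k^*}(t) K_h$ of the bounded convex set $K_h$, and the surface-energy bound together with Lemma~\ref{lem_slice} gives BV control (and hence an $L^\infty$ bound) on $t \mapsto r_{E_k^*}(t)$, using the contact-area bound to control the boundary value at $t=0^+$. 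Together these estimates confine $\{E_k^*\}_k$, up to uniformly small $L^1$ error in the tail, to a fixed compact subset of $\overline{\{x_N \geq 0\}}$.

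Applying the standard BV compactness theorem (see \cite{AFP}), I extract a subsequence with $\chi_{E_k^*} \to \chi_{E_0}$ in $L^1_{\loc}$, and the tail estimate promotes this to $L^1$ convergence, giving $|E_0| = m$ and $E_0 \in \mathcal{F}_m$ (since $\{x_N = 0\}$ is Lebesgue-null). It remains to verify the lower semicontinuity $\mathscr{F}(E_0) \leq \liminf_k \mathscr{F}(E_k^*)$. The gravitational term is lower semicontinuous by Fatou's lemma. For the combined surface and contact terms, I rewrite $\mathscr{F}_s + \mathscr{F}_c$ as a single Reshetnyak-type anisotropic surface integral on $\partial^* E$ with a position-dependent convex integrand $\tilde f(x,\nu)$ equal to $f(\nu)$ for $x_N>0$ and to $-\omega \nu_N$ (for the normal $\nu = -e_N$) on the wall $\{x_N = 0\}$; the hypothesis $\omega \in (-f(e_N), f(-e_N))$ is exactly what ensures that this weighted integrand is convex in $\nu$ globally, so the classical Reshetnyak lower semicontinuity theorem applies to the combined surface integral under $L^1$ convergence of characteristic functions.

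Putting these pieces together yields $\mathscr{F}(E_0) \leq \liminf_k \mathscr{F}(E_k^*) = \inf_{\mathcal{F}_m} \mathscr{F}$, identifying $E_0$ as the desired minimizer. I expect the main obstacle to be the lower semicontinuity of the contact term when $\omega < 0$: a naive argument on $\mathscr{F}_c$ alone would require upper semicontinuity of $\mathcal{H}^{N-1}(\partial^* E \cap \{x_N = 0\})$, which need not hold. The resolution is the bundling of $\mathscr{F}_s$ and $\mathscr{F}_c$ described above, which genuinely uses the full strength of $\omega \in (-f(e_N), f(-e_N))$; a secondary technical point is ensuring no mass escapes to infinity, which is handled by combining the horizontal control provided by symmetrization with the vertical control provided by the gravitational term.
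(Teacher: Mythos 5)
Your overall structure — symmetrize, extract compactness from the energy bound, and conclude by lower semicontinuity — is exactly the paper's, and the compactness step (Chebyshev for vertical decay, the symmetric slice structure for horizontal confinement) also matches. The one place you take a genuinely different route is the lower semicontinuity of $\mathscr{F}_s+\mathscr{F}_c$, where you propose a single Reshetnyak-type functional with a position-dependent integrand $\tilde f$, while the paper splits into the cases $\omega\geq 0$ (convex combination of two lower semicontinuous functionals) and $\omega<0$ (a localized calibration with cut-off vector fields). Your unified viewpoint is attractive, but as stated it has a gap, and the reasoning you attach to it is off the mark.

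The gap: the classical Reshetnyak lower semicontinuity theorem (e.g.\ Thm.~2.38 of \cite{AFP}) requires the integrand to be \emph{nonnegative} and jointly lower semicontinuous, in addition to being convex and positively $1$-homogeneous in $\nu$. Your $\tilde f$ fails nonnegativity: on the wall, the only normal that occurs is $\nu=-e_N$, and there $\tilde f(x,-e_N)=\omega$, which is negative precisely in the delicate case $\omega<0$. Moreover, your stated claim that $\omega\in(-f(e_N),f(-e_N))$ ``is exactly what ensures that this weighted integrand is convex in $\nu$'' is not correct: on the wall $\tilde f(x,\cdot)$ is the linear form $\nu\mapsto -\omega\nu_N$, hence convex for \emph{every} $\omega$, so the hypothesis is not doing work at the convexity level. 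What the hypothesis actually provides is the inclusion $-\omega e_N\in\overline{K}$ (for symmetrizable $f$ the Wulff shape $K$ contains the segment $\{(0',t):\ -f(-e_N)<t<f(e_N)\}$ of the $e_N$ axis, and $\omega\in(-f(e_N),f(-e_N))$ places $-\omega$ in that range). This is precisely the condition needed to make your approach work after a calibration: replace $\tilde f$ by
\[
\tilde g(x,\nu):=\tilde f(x,\nu)-(-\omega e_N)\cdot\nu=\tilde f(x,\nu)+\omega\,\nu_N,
\]
which equals $f(\nu)+\omega\nu_N$ for $x_N>0$ and $0$ for $x_N=0$. The correction $\nu\mapsto(-\omega e_N)\cdot\nu$ integrates to zero over $\partial^*E$ by the divergence theorem (since $|E|<\infty$), so $\mathscr{F}_s+\mathscr{F}_c=\int_{\partial^*E}\tilde g(x,\nu_E)\,d\mathcal{H}^{N-1}$. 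Now $\tilde g\geq 0$ (this is exactly $-\omega e_N\in\overline{K}$), $\tilde g$ is jointly lower semicontinuous (the wall value is $0$ and $\tilde g\geq 0$ elsewhere), and $\tilde g(x,\cdot)$ is convex and $1$-homogeneous; Reshetnyak then applies. This recovers a clean, case-free lower semicontinuity argument — and it is, after the dust settles, the same calibration by a vector in $\overline{K}$ that the paper deploys explicitly in its $\omega<0$ branch (there with cut-off vector fields $\max\{1-x_N/\delta,0\}\,v$ to localize near the wall before taking $\delta\to 0$). So the idea is salvageable and in fact clean, but the proof as you wrote it misidentifies the role of the hypothesis on $\omega$ and leaves the nonnegativity requirement unaddressed; without the calibration, the Reshetnyak theorem does not apply to $\tilde f$ for $\omega<0$.
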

The proof of Theorem $\ref{thm_exist}$ is based on the direct method of the calculus of variations and is given in Section $\ref{sec_exist}$.  In particular, choosing a minimizing sequence and invoking the symmetrization result of Theorem $\ref{thm_symm_f}$, we obtain uniform bounds which enable us to use classical compactness theorems for sets of finite perimeter.  Theorem $\ref{thm_exist}$ follows once we establish a suitable lower-semicontinuity result for the functional $\mathscr{F}$.  The arguments involved are closely related to the isotropic case \cite{Gonzalez} where symmetrization results are exploited in a similar manner, although we remark that the anisotropic setting does present some subtlety in establishing the lower-semicontinuity of the contact energy when $\omega<0$.  We refer the reader to Section $\ref{sec_exist}$ for further discussion of this point.

\vspace{0.2in}

The next two results concern regularity properties of minimizers.  In particular, we begin by establishing the convexity of symmetric minimizers; here, and in the remainder of this paper, we use the term symmetric in the context of the symmetrization $E\mapsto E^*$ given by Definition $\ref{defsymm}$.
\begin{thm}[Convexity of symmetric minimizers]
\label{thm_concave}
Fix $m>0$ along with $\omega\in (-f(e_N),f(-e_N))$, and let $f$ be given admissible with $f(x)=\phi(h(x'),x_N)$.  Let $E\in \mathcal{F}_m$ be given such that $E=E^*$.  If $E$ is a minimizer for $\mathscr{F}$, then $\{t:v_E(t)>0\}$ is an interval of the form $(0,T_{\max})$ and $r_E$ is concave on $[0,T_{\max}]$.
\end{thm}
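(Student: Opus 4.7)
The proof splits into two parts: the structural statement that $\{v_E>0\}$ is a single interval $(0,T_{\max})$, and the concavity of $r_E$ on that interval.

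For the structural claim, the plan is to argue by translations. If $v_E\equiv 0$ on some initial interval $(0,\delta)$, then $\tilde E := E - (\delta/2)e_N$ lies in $\mathcal{F}_m$, has the same surface energy (by translation invariance of $\mathscr{F}_s$ away from the plane), trivial contact energy, and strictly smaller gravitational energy by $m\delta/2$, contradicting minimality. To rule out an interior gap where $v_E>0$ on $(a,b)\cup(c,d)$ and $v_E=0$ on $(b,c)$, I would form
\begin{align*}
\tilde E := \bigl(E\cap\{x_N\le b\}\bigr) \cup \bigl((E\cap\{x_N>c\}) - (c-b)e_N\bigr).
\end{align*}
Since $E=E^*$, the upper cap $E_{b^-} = r_E^-(b)K_h$ of the lower piece and the lower cap $E_{c^+} = r_E^+(c)K_h$ of the translated upper piece are concentric rescaled Wulff shapes, so one contains the other. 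Merging them at height $b$ reduces the horizontal surface contribution from $f(e_N)|E_{b^-}| + f(-e_N)|E_{c^+}|$ to $f(\pm e_N)\bigl||E_{b^-}|-|E_{c^+}|\bigr|$, a strict reduction by $(f(e_N)+f(-e_N))\min(|E_{b^-}|,|E_{c^+}|)>0$. The gravitational energy strictly decreases by $(c-b)|E\cap\{x_N>c\}|$ and the contact energy is unchanged, contradicting minimality.

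For the concavity, I would first record an explicit slice-by-slice representation of $\mathscr{F}$. Using Lemma \ref{lem_coarea}, the symmetry $E=E^*$, the $1$-homogeneity of $\phi$ on $\Rng(h)\times\mathbb{R}$, and the Wulff identity $\int_{\partial(rK_h)} h(\nu_{K_h})\,d\mathcal{H}^{N-2} = (N-1)|K_h|r^{N-2}$, the lateral surface energy takes the form
\begin{align*}
\mathscr{F}_s^{\mathrm{lat}}(E) = (N-1)|K_h|\int_0^{T_{\max}} \phi(1,-r_E'(t))\,r_E(t)^{N-2}\,dt,
\end{align*}
supplemented by horizontal-cap contributions of the form $f(\pm e_N)|K_h|\bigl|r_E^-(t)^{N-1}-r_E^+(t)^{N-1}\bigr|$ at jumps of $r_E$; moreover $\mathscr{F}_p(E)=|K_h|\int t\,r_E^{N-1}\,dt$ and $\mathscr{F}_c(E)=\omega|K_h|r_E^+(0)^{N-1}$. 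Assuming for contradiction that $r_E$ is not concave on $(0,T_{\max})$, then (after passing to good representatives) there exist $t_1<t_2$ in $(0,T_{\max})$ with the chord from $(t_1,r_E(t_1))$ to $(t_2,r_E(t_2))$ lying strictly above the graph of $r_E$ at some $t_0\in(t_1,t_2)$. The plan is to build a competitor $\tilde E$ by replacing $r_E$ on $[t_1,t_2]$ with a concave majorant $\tilde r\ge r_E$ matching $r_E$ at the endpoints (e.g.\ the concave envelope of $r_E|_{[t_1,t_2]}$), and compensating the extra volume $\Delta V>0$ by truncating $E$ from above at the unique $\tilde T_{\max}<T_{\max}$ with $|K_h|\int_{\tilde T_{\max}}^{T_{\max}} r_E^{N-1}\,dt = \Delta V$. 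This construction strictly moves mass downward, yielding a gravitational gain bounded below by $(\tilde T_{\max}-t_2)\Delta V > 0$, while the contact energy is unchanged.

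The hard part will be controlling the surface-energy change under the profile modification. Replacing $r_E$ by $\tilde r$ alters both the value and the derivative inside the convex integrand $\phi(1,-r')\,r^{N-2}$ in a coupled, non-monotone fashion, and the truncation at $\tilde T_{\max}$ introduces its own horizontal cap of cost $f(e_N)|K_h|r_E(\tilde T_{\max})^{N-1}$ while removing the lateral surface on $[\tilde T_{\max},T_{\max}]$. Unlike the isotropic case, analyticity of minimizers is unavailable here because $f$ may be non-smooth, so one must work entirely at the level of finite-perimeter sets. The intended route is to localize $[t_1,t_2]$ tightly around the point of non-concavity and to exploit the admissibility hypothesis (strict convexity and $C^1$-flatness of $\phi$ at $(0,\pm 1)$), together with Jensen's inequality applied to the convex one-variable function $s\mapsto \phi(1,-s)$, so that the (at most) first-order surface-energy perturbation is dominated by the strict gravitational gain. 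This balance — surface energy's preference for the Wulff shape versus gravity's preference for downward mass — is exactly the delicate construction using fine properties of sets of finite perimeter advertised in the introduction, and is the crux of the argument.
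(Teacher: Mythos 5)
Your structural argument for the single-interval claim is correct and essentially matches the paper's treatment; the translation-downward comparison for an initial gap, and the merge-and-drop comparison for an interior gap, both work.

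The concavity argument, however, has a genuine gap. Replacing $r_E|_{[t_1,t_2]}$ with its concave envelope and compensating by truncating mass off the top does not give you control of the surface energy, and your claim that the surface-energy perturbation is ``at most first-order'' and therefore dominated by the gravitational gain is not substantiated and will not work as stated. Both the gravitational gain and the surface-energy change shrink together as you localize $[t_1,t_2]$, and there is no small parameter that forces the gravitational term to win. Worse, the concave envelope is a \emph{pointwise majorant} of $r_E$, so the replacement enlarges the lateral radius and can \emph{increase} the lateral surface term $\int \phi(1,-r')\,r^{N-2}\,dt$ (the factor $r^{N-2}$ grows), with nothing to compensate; Jensen applied to $s\mapsto\phi(1,-s)$ controls the $r'$-dependence but not the $r^{N-2}$ weight. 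The paper avoids this entirely: rather than a generic concave majorant, it replaces $E\cap\{t_1<x_N<t_2\}$ by a \emph{truncated, rescaled copy of the Wulff shape} $K_+(E,\sigma,t_1)\cap\{t_1<x_N<\tau\}$ constructed in Lemma~\ref{lem_construction} so as to match both the volume and the slice areas at the cut heights. The key point is Lemma~\ref{lem_compare_s}: by sewing the same top and bottom caps onto both the replaced section and the original section and invoking the anisotropic Wulff inequality, one obtains a \emph{strict decrease} in surface energy, not a perturbative estimate. The top portion of $E$ is then \emph{translated downward} (not truncated), so mass moves down and $\mathscr{F}_p$ strictly decreases for free in Case 1; the more delicate Case 2 ($r_E^-(t_1)>r_E^+(t_2)$) needs the extra width hypothesis (\ref{hyp2}) to ensure the gravitational gain survives.

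A secondary omission: the paper first uses the competitor lemma to prove that $v_E$ (hence $r_E$) is \emph{continuous} (Proposition~\ref{prop_cty}), and only then deduces concavity via the elementary localization Lemma~\ref{lem_concave}. Your ``passing to good representatives'' glosses over this step; in particular, the competitor lemma needs both left- and right-continuity arguments with different care because of hypothesis~(\ref{hyp2}), and you would need to establish continuity of $r_E$ before the chord comparison in your proposed argument even makes sense pointwise.
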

The proof of Theorem $\ref{thm_concave}$ is based on a competitor argument, in which a family of rescaled copies of the Wulff shape for $f$ is used to construct a local competitor at hypothetical points of non-convexity.  By this construction, we show that the functional $\mathscr{F}$ takes on a smaller value for the competing set; we therefore obtain that such points of non-convexity cannot exist.  The argument may be seen as a rather delicate localization of the arguments which establish convexity in the isotropic case \cite{GonzalezTamanini}.  As we remarked above, the proof in \cite{GonzalezTamanini} makes strong use of the regularity theory of minimal surfaces to obtain analyticity of minimizers, which is not available in our setting.  To compensate, our arguments proceed with a more localized construction which carefully exploits fine properties of sets of finite perimeter in place of these a priori regularity results.  We refer to Section $6$ for a more detailed description.

The next step in our analysis is to extend the convexity of Theorem $\ref{thm_concave}$ to arbitrary - that is, not necessarily symmetric - minimizers for $\mathscr{F}$.  In particular, using the equality condition of the symmetrization result Theorem $\ref{thm_symm_f}$, we show that arbitrary minimizers are symmetric and therefore convex (by Theorem $\ref{thm_concave}$).  Our main result takes the form
\begin{thm}[Symmetry and convexity of minimizers]
\label{thm_min}
Fix $m>0$ together with $\omega\in (-f(e_N),f(-e_N))$ and let $f$ be given admissible with $f(x)=\phi(h(x'),x_N)$.  Suppose that $E\in\mathcal{F}_m$ is a minimizer for $\mathscr{F}$.  Then there exists a function $\alpha:[0,\infty)\rightarrow \mathbb{R}^+$ and a constant $\beta_0\in\mathbb{R}^{N-1}$ such that 
\begin{align*}
E_t=\beta_0+\alpha(t)K_h
\end{align*}
for a.e. $t\in [0,T_{\max}]$.  Moreover, $\alpha$ is concave on $[0,T_{\max}]$.
\end{thm}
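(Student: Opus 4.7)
The plan is to combine the symmetrization inequality of Theorem \ref{thm_symm_f} with the convexity result for symmetric minimizers (Theorem \ref{thm_concave}).  Since $E\in\mathcal{F}_m$ minimizes $\mathscr{F}$ and $|E^*|=|E|=m$, Theorem \ref{thm_symm_f} gives $\mathscr{F}(E^*)\leq \mathscr{F}(E)$, and minimality of $E$ forces equality; in particular, $E^*$ is itself a minimizer.  Applying Theorem \ref{thm_concave} to the symmetric set $E^*$ then yields $\{t:v_{E^*}(t)>0\}=(0,T_{\max})$ for some $T_{\max}>0$ together with concavity of $r_{E^*}$ on $[0,T_{\max}]$.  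Since $v_E=v_{E^*}$ by the very construction of the anisotropic symmetrization, the function $\alpha:=r_E=r_{E^*}$ serves as the concave profile required by the statement.

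To recover the shape of each slice, I would invoke the equality case of Theorem \ref{thm_symm_f}, established in Section $4$ as part of the careful analysis of equality in the anisotropic symmetrization.  This equality case produces, for a.e. $t\in (0,T_{\max})$, a measurable translation $\beta:(0,T_{\max})\rightarrow \mathbb{R}^{N-1}$ satisfying $E_t=\beta(t)+\alpha(t)K_h$; this is the standard slice-by-slice conclusion of Fubini-type symmetrization arguments, and it is exactly what the present codimension $N-1$ symmetrization is tailored to produce.  The remaining task, which I expect to be the main obstacle, is to show that $\beta$ is a.e. equal to a single constant $\beta_0\in\mathbb{R}^{N-1}$.

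For this constancy step, I would rely crucially on the admissibility hypothesis, in particular on the strict convexity and $C^1$ regularity of $\phi$ from Definition \ref{def_adm}.  The approach is to compute $\mathscr{F}_s(E)$ by slices via Lemma \ref{lem_coarea}, using the identity $f(\nu_E)=\phi(h(\pi_1(\nu_E)),\pi_N(\nu_E))$ supplied by symmetrizability.  On the reduced boundary of a slice $E_t=\beta(t)+\alpha(t)K_h$, the normal $\nu_E$ can be expressed, via the BV differentiation of Lemma \ref{lem_slice} applied to $E$ and to its horizontally translated versions, in terms of the outer normal to $K_h$ together with the distributional derivatives of $\alpha$ and $\beta$.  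Strict convexity of $\phi$ implies that, with $\alpha(t)$ held fixed, the resulting slice integrand is strictly minimized by configurations with $\beta'(t)=0$, and a parallel treatment of the singular part of $D\beta$ carried out via the recession function of $\phi$ (which inherits strict convexity from admissibility) rules out jump and Cantor-type components.  Integrating and comparing to the corresponding expression for $E^*$, the equality $\mathscr{F}_s(E)=\mathscr{F}_s(E^*)$ forces $D\beta\equiv 0$ as a measure, so $\beta\equiv \beta_0$ a.e., completing the proof.  The delicate point is the BV-measure bookkeeping in this slice-by-slice comparison and the propagation of the infinitesimal ``no-tilt'' information provided by strict convexity of $\phi$ into the global constancy of $\beta$.
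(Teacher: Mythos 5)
Your overall skeleton matches the paper: use Theorem \ref{thm_symm_f} to conclude that $E^*$ is itself a minimizer, apply Theorem \ref{thm_concave} to get concavity of $\alpha=r_E=r_{E^*}$, invoke the equality case of the slice-by-slice Jensen estimate to conclude $E_t=\beta(t)+\alpha(t)K_h$ for a.e.\ $t$, and then show $\beta$ is constant. Up through the identification of the slices as translated Wulff shapes, the proposal is essentially right (modulo the fact that one should first establish, as in the paper's Remark \ref{rem61}, that a minimizer has no vertical normals at interior heights, which is what licenses reading off the slice structure from the equality case of Lemma \ref{lem_symm}).

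The genuine gap is in the constancy step, and it is exactly the point you flag as ``the delicate point'' but then treat too loosely. Your plan presupposes that $\beta$ has enough regularity for $D\beta$ to be a well-defined Radon measure so that strict convexity can be fed in slice by slice; but a priori $\beta$ is merely a measurable function, and nothing in the equality case alone gives $\beta\in BV_{\loc}$, let alone absolute continuity. The paper closes this gap with two nontrivial inputs that your proposal omits: (i) a density estimate for almost-minimizers of the anisotropic surface energy (Lemma \ref{lem_density}, adapted from Figalli--Maggi), which is used to prove that $\beta$ is locally bounded (Lemma \ref{lem_symm1}); and (ii) a Fubini/integration-by-parts argument in the style of Barchiesi--Cagnetti--Fusco (Lemma \ref{lem_w11}) showing that $\beta\in W^{1,1}_{\loc}$ together with an explicit formula for $\beta'$. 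Only then does one substitute the functional relation $\pi_N(\nu_E(x',t))=c_E(t)h(\pi_1(\nu_E(x',t)))$ coming from equality in Jensen's inequality, and verify by direct cancellation that $\beta'(t)=0$ a.e.

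A second, more minor issue: your idea of ruling out jump and Cantor parts of $D\beta$ ``via the recession function of $\phi$ (which inherits strict convexity from admissibility)'' does not hold up. Since $\phi$ is positively $1$-homogeneous on $\Rng(h)\times\mathbb{R}$, its recession function is $\phi$ itself, so this appeal adds nothing; moreover a positively $1$-homogeneous function is never strictly convex along rays, so ``strict convexity'' here must be understood in the restricted sense used in Definition \ref{def_adm}. The paper avoids the issue entirely by establishing $\beta\in W^{1,1}_{\loc}$ directly (so there are no singular parts to control), rather than by trying to penalize them through the integrand.
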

The proof of Theorem $\ref{thm_min}$ is based on the analysis of symmetric minimizers of Section $4$ combined with a study of the regularity properties of the barycenter of each slice of the minimizer.  These arguments originate in the recent work of Barchiesi, Cagnetti and Fusco \cite{BCF}, where the authors study the equality cases and stability of the classical Steiner symmetrization with arbitrary codimension.

\vspace{0.2in}

We conclude our study by giving an ODE characterization of minimizers along with an associated boundary condition (analogous to the classical Young's law in the isotropic case) in the case that $f$ is sufficiently smooth:
\begin{thm} [ODE characterization of minimizers and anisotropic Young's law]
\label{thm_young}
Suppose that $\phi$ is $C^2$.  Then any minimizer $E$ satisfies the Euler-Lagrange equation
\begin{align}
\nonumber -\frac{d}{dt}\left[(N-1)r_E^{N-2}\partial_2\phi(\Lambda,-(N-1)r'_E)\right]&=(N-2)r_E^{N-3}\phi(\Lambda,-(N-1)r'_E)\\
&\hspace{0.2in}+(N-1)(t+\lambda)r_E^{N-2}\label{el1}
\end{align}
along with the boundary condition
\begin{align}
\label{youngs}-\partial_2\phi(\Lambda,-(N-1)r_E'(0))=\omega,
\end{align}
where
\begin{align*}
\Lambda=\frac{P_h(K_h)}{|K_h|}\quad \textrm{and}\quad P_h(K_h)=\int_{K_h} h(\nu_{K_h}(x'))dx'.
\end{align*}
\end{thm}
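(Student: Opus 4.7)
The plan is to reduce the problem to a classical one-dimensional calculus of variations problem using the strong structural information from Theorem \ref{thm_min}. After translation, any minimizer $E$ has slices $E_t = r(t) K_h$, where $r := r_E$ is positive and concave on $(0, T_{\max})$.

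First, I would express each component of $\mathscr{F}(E)$ as a one-dimensional integral in $r$. The gravitational and contact pieces, together with the volume constraint, reduce immediately via Fubini to $\mathscr{F}_p(E) = |K_h|\int_0^{T_{\max}} t\, r^{N-1}\, dt$, $\mathscr{F}_c(E) = \omega |K_h| r(0)^{N-1}$, and $|E| = |K_h|\int r^{N-1}\, dt$. The essential computation concerns $\mathscr{F}_s$. Parametrizing the lateral boundary by $(y, t) \mapsto (r(t)y, t)$ for $y \in \partial K_h$ and using the support-function duality $h(\nu_{K_h}(y)) = y \cdot \nu_{K_h}(y)$, one finds the unnormalized outer normal $(\nu_{K_h}(y), -r'(t) h(\nu_{K_h}(y)))$. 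Applying the coarea formula (Lemma \ref{lem_coarea}) with $g = f(\nu_E)/|\pi_1(\nu_E)|$, together with the positive $1$-homogeneity of $\phi$ on $\Rng(h) \times \mathbb{R}$ (so that $\phi(h(\nu_{K_h}(y)), -r'(t)\, h(\nu_{K_h}(y))) = h(\nu_{K_h}(y)) \phi(1, -r'(t))$), gives
\[
\mathscr{F}_s(E) = P_h(K_h) \int_0^{T_{\max}} r^{N-2}\, \phi(1, -r') \, dt.
\]

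Second, using the anisotropic isoperimetric identity $P_h(K_h) = (N-1)|K_h|$ (so $\Lambda = N-1$) and the homogeneity of $\phi$ once more, the constrained energy becomes
\[
\mathscr{F}(E) + \lambda(|E| - m) = |K_h|\!\int_0^{T_{\max}}\!\bigl[\,r^{N-2}\phi(\Lambda, -(N-1)r') + (t+\lambda) r^{N-1}\,\bigr]dt + \omega |K_h| r(0)^{N-1} - \lambda m,
\]
where $\lambda$ is a Lagrange multiplier for the volume constraint. The Lagrangian $L(t,r,r')$ is $C^2$ in $(r,r')$ (since $\phi \in C^2$) and strictly convex in $r'$ on the relevant range, so the concave function $r$ is upgraded to $C^2$ on $(0, T_{\max})$ by standard regularity for one-dimensional variational problems, and the classical Euler--Lagrange equation $\partial_r L = \tfrac{d}{dt} \partial_{r'} L$ produces, by direct differentiation, exactly (\ref{el1}).

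Finally, the boundary condition (\ref{youngs}) arises from the free endpoint at $t = 0$: for admissible variations $\delta r$ with $\delta r(T_{\max}) = 0$ and $\delta r(0)$ arbitrary, the boundary term from integration by parts contributes $-\partial_{r'} L(0)\, \delta r(0)$, which must cancel the variation $(N-1)\omega |K_h| r(0)^{N-2} \delta r(0)$ of the contact energy. Substituting $\partial_{r'} L = -(N-1) r^{N-2}\, \partial_2 \phi(\Lambda, -(N-1)r')$ yields $-\partial_2 \phi(\Lambda, -(N-1)r'(0)) = \omega$. The main obstacle is the careful reduction of $\mathscr{F}_s$ to a one-dimensional integral in $r$ --- correctly identifying the outer normal along the lateral boundary and applying the coarea formula with the right weight --- together with the regularity upgrade needed to justify writing the Euler--Lagrange equation in the strong form (\ref{el1}).
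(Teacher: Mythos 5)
Your proposal is correct and follows essentially the same route as the paper: reduce via Theorem~\ref{thm_min} to a one-dimensional constrained calculus of variations problem in the profile $r_E$, write $\mathscr{F}_s$ as $|K_h|\int r^{N-2}\phi(\Lambda,-(N-1)r')\,dt$, and obtain~(\ref{el1}) and~(\ref{youngs}) from the first variation with a Lagrange multiplier. The only cosmetic differences are that you derive the one-dimensional form of $\mathscr{F}_s$ by directly parametrizing the lateral boundary (and explicitly invoking the divergence-theorem identity $P_h(K_h)=(N-1)|K_h|$) rather than reading the slice formula off Lemma~\ref{lem_symm}/Lemma~\ref{lem_coarea} as the paper does, and you explicitly flag the $C^2$ regularity upgrade needed to pass from the weak to the strong Euler--Lagrange equation, which the paper leaves implicit.
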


We then use this ODE characterization to show the uniqueness of minimizers, basing our analysis on an adaptation of the argument given by Finn for the uniqueness of the classical isotropic sessile drop \cite{Finn1}.  
\begin{thm}[Uniqueness of minimizers]
Suppose that $\phi$ and $h$ are smooth, and let $\omega\in (-f(e_N),f(-e_N))$ be given.  Then for each $m>0$ there exists a unique minimizer $E\in\mathcal{F}_m$.
\label{thm_unique}
\end{thm}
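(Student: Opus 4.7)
My proof plan is to adapt the argument of Finn \cite{Finn1} from the isotropic sessile drop to our anisotropic setting. By Theorems \ref{thm_min} and \ref{thm_young}, every minimizer $E\in\mathcal{F}_m$ is, up to a horizontal translation $\beta_0\in\mathbb{R}^{N-1}$ (which is a trivial symmetry of both $\mathscr{F}$ and the volume constraint), completely determined by a single concave profile $\alpha=r_E:[0,T_{\max}]\to[0,\infty)$ satisfying the Euler--Lagrange ODE (\ref{el1}) on $(0,T_{\max})$, the contact boundary condition (\ref{youngs}) at $t=0$, and the mass constraint
\[
|K_h|\int_0^{T_{\max}} \alpha(t)^{N-1}\,dt = m,
\]
with $\lambda$ in (\ref{el1}) playing the role of a Lagrange multiplier for the volume. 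Uniqueness (necessarily understood modulo $\beta_0$) therefore reduces to the claim that this boundary-value problem has, for each $m>0$, a unique concave solution.

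First, I would recast the problem as a shooting problem. Strict convexity of $\phi$ makes $\partial_2\phi(\Lambda,\cdot)$ a strictly monotone function, so (\ref{youngs}) prescribes the initial slope $\alpha'(0)=:s_0(\omega)$ uniquely in terms of the fixed data $\omega$ and $\Lambda$, independently of $r_0:=\alpha(0)$ and $\lambda$. Standard ODE theory applied to (\ref{el1})---regular away from $\alpha=0$ thanks to the $C^2$ and strict-convexity hypotheses---then produces, for every $(r_0,\lambda)\in(0,\infty)\times\mathbb{R}$, a unique maximal concave solution $\alpha(\cdot;r_0,\lambda)$ on $[0,T_{\max}(r_0,\lambda))$ vanishing at the right endpoint. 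The admissibility condition $\partial_1\phi(0,\pm1)=0$, which expresses horizontal flatness of the Wulff shape at its apex, imposes a single closing condition at the cap $t=T_{\max}$; this cuts the two-dimensional parameter space down to a one-parameter family $\Gamma$ of data $(r_0,\lambda)$ whose profiles close up smoothly and yield honest sessile-drop competitors.

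Parameterising $\Gamma$ by $r_0$, I would then prove strict monotonicity of the mass map
\[
r_0\ \longmapsto\ m(r_0):=|K_h|\int_0^{T_{\max}(r_0)} \alpha(t;r_0)^{N-1}\,dt.
\]
Following Finn, I would differentiate (\ref{el1}) in $r_0$ to obtain a linear Jacobi-type equation for the variation $w:=\partial\alpha/\partial r_0$, and apply a Sturm-comparison / maximum principle argument---exploiting the sign structure of the gravitational term $(t+\lambda)r^{N-2}$---to show that $w$ does not change sign on $(0,T_{\max})$. Differentiating (\ref{youngs}) and the defining relation of $\Gamma$ pins down the sign of $w$ at $t=0$ as strictly positive, so $w>0$ on $[0,T_{\max})$ and hence $dm/dr_0>0$. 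Combined with the expected endpoint limits $m(r_0)\to 0^+$ and $m(r_0)\to\infty$ along $\Gamma$, this yields a bijection $r_0\leftrightarrow m$, hence uniqueness.

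The main obstacle will be the analysis of the ODE at the cap $t=T_{\max}$, where $\alpha\to 0$ and the coefficients $\alpha^{N-2}$, $\alpha^{N-3}$ degenerate. To make the shooting problem well-posed, to verify that $\Gamma$ is indeed a smooth one-dimensional manifold, and to propagate the sign of the Jacobi variation $w$ across the singular endpoint, one must extract the precise asymptotic expansion of $\alpha$ near $T_{\max}$---dictated by the flatness condition $\partial_1\phi(0,\pm 1)=0$---and perform a matching analysis for the linearised equation satisfied by $w$. This boundary-layer step, together with verifying that the sign-propagation argument of \cite{Finn1} survives the anisotropy, is the only substantial deviation from the classical template; once it is in place, the remaining pieces of the argument are standard.
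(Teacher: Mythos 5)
Your overall strategy is the same as the paper's: reduce to a shooting problem for the ODE profile of Theorem \ref{thm_young} and prove uniqueness via strict monotonicity of a one-parameter mass map, following Finn \cite{Finn1}. Your Jacobi/Sturm idea for the monotonicity is also in spirit what the paper does---it differentiates along the shooting family, shows the prefactor in $\partial_s\dot V$ (equation \eqref{dotVs}) is signed, and then propagates signs of the variations $\dot r$ and $\dot v$ by a direct ODE argument rather than by Sturm comparison. Where the two arguments differ is the set-up of the one-parameter family, and that difference is exactly where your proposal has a gap.

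You shoot from the base: choose $\alpha(0)=r_0$ and the multiplier $\lambda$, determine $\alpha'(0)$ from \eqref{youngs}, integrate up, and impose a ``closing'' condition at the cap to cut $(r_0,\lambda)$-space down to a one-parameter curve $\Gamma$. You correctly flag that the cap is a singular endpoint of \eqref{el1} (the coefficients $r_E^{N-2},r_E^{N-3}$ degenerate as $\alpha\to 0$) and that making $\Gamma$ a smooth $1$-manifold and propagating the sign of the Jacobi variation across the singularity is ``the main obstacle.'' That obstacle is real, and your proposal does not resolve it. The paper never encounters it because it changes variables first: for $\omega<0$ the profile is a graph, so one may invert $t=u(r)$, which turns the cap into the \emph{regular} endpoint $r=0$ with the smoothness condition $u'(0)=0$ appearing as an ordinary initial condition; the further substitution $v(r)=-\Lambda^{-1}\big(u(\Lambda^2 r)+\lambda\big)$ absorbs $\lambda$ into the single shooting datum $v_0=v(0)$, yielding the capillary-type problem \eqref{eq_cap} with a genuinely one-dimensional parameter space and no singular endpoint. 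The boundary-layer matching and manifold verification you list as the crux simply disappear, and the volume becomes the explicit expression \eqref{eqvcalc} whose monotonicity in $v_0$ can be read off. The case $\omega\geq 0$, where $r_E$ is not globally invertible, is then handled by splitting the convex profile at its maximal radius and repeating the argument on each monotone piece---a point your proposal does not address but would also need. You should adopt the change of variables rather than attempt the base-to-cap shooting: without it, closing the gap at the degenerate cap is not merely technical work but an open step in your argument.
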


\section{Anisotropic symmetrization.}

This section is devoted to the symmetrization arguments which form the basis for our study.  In particular, our goal is to develop the machinery necessary to establish Theorem $\ref{thm_symm_f}$, with particular attention paid to the case of equality.  Our proof of this theorem is based on an analysis of the slices $A_t$ defined in ($\ref{defslice}$), making use of Lemmas $\ref{lem_goodset}$, $\ref{lem_slice}$ and $\ref{lem_coarea}$.  To facilitate our arguments, we consider the dual function
\begin{align*}
h_*(x):=\sup\{x\cdot y:h(y)=1\},
\end{align*}
which gives the convenient characterization $K_h=\{x:h_*(x)<1\}$ of the Wulff shape for $h$.  We observe that our hypotheses on $h$ imply that $h_*$ is positively $1$-homogeneous and convex; in particular, it is differentiable almost everywhere.  Moreover, we have the following remark, which was observed by Van Schaftingen in \cite{JS} as a key component of the analysis.  We include the argument here for the convenience of the reader.

\begin{rem}(\cite[Theorem $6.2$]{JS})  
\label{rem_hstar}
The identity $h(\nabla h_*(x'))=1$ holds at every point of differentiability for $h_*$.  Indeed, suppose that $h_*$ is differentiable at $x\in \mathbb{R}^{N-1}$.  Then the theory of subdifferentials for convex functions gives 
$\partial h_*(x)=\{\nabla h_*(x)\}$,
where
\begin{align*}
\partial h_*(x):=\{z\in\mathbb{R}^{N-1}:\forall\,y\in\mathbb{R}^{N-1},\quad h_*(y)\geq h_*(x)+z\cdot (y-x)\}.
\end{align*}

On the other hand, writing $h_*(x)=\sup \{\frac{x\cdot y}{h(y)}:|y|=1\}$, we note that the lower-semicontinuity of $h$ and compactness of the unit sphere allow us to choose $y_0\in\mathbb{R}^{N-1}$ such that $|y_0|=1$ and $h_*(x)=x\cdot y_0/h(y_0)$.  Setting $\tilde{y}_0=y_0/h(y_0)$, we then have
$h(\tilde{y}_0)=1$ and $h_*(\tilde{y}_0)=x\cdot y_0$.
This in turn implies 
\begin{align*}
h_*(y)\geq y\cdot \tilde{y}_0=x\cdot \tilde{y}_0+(y-x)\cdot \tilde{y}_0=h_*(\tilde{y}_0)+(y-x)\cdot \tilde{y}_0 \quad \forall\,y\in\mathbb{R}^{N-1},
\end{align*}
so that $\tilde{y}_0$ belongs to the set $\partial h_*(x)$.  Thus, $\nabla h_*(x)=\tilde{y}_0$, which gives $h(\nabla h_*(y))=h(\tilde{y}_0)=1$ as desired.
\end{rem}

With this remark in hand, we now compute the normal to a symmetric set.
\begin{lem}
\label{lem_41}
Suppose $A\in \mathcal{F}_m$ satisfies $A_t=r_A(t)K_h$ for a.e. $t\in [0,\infty)$ with $r_A(t)$ as in $(\ref{eqrdef})$, and define
\begin{align*}
F:=\{(s,t):s<r_A(t)\}.
\end{align*}

Then, for a.e. $t\in [0,\infty)$ and $\mathcal{H}^{N-2}$-a.e. $x'\in \partial^*A_t\cap \{x':\pi_1(\nu_A(x',t))\neq 0\}$, we have
$\pi_1(\nu_F(h_*(x'),t))\nabla h_*(x')\neq 0$, and
\begin{align}
\frac{\nu_A(x',t)}{|\pi_1(\nu_A(x',t)|}&=\frac{1}{|\pi_1(\nu_F(h_*(x'),t))\nabla h_*(x')|}\left(\begin{array}{c}\pi_1(\nu_F(h_*(x'),t))\nabla h_*(x')\\\pi_2(\nu_F(h_*(x'),t))\end{array}\right).\label{eqab1}
\end{align}

Moreover, for a.e. $t\in [0,\infty)$ and $\mathcal{H}^{N-2}$-a.e. $x'\in \partial^*A_t\cap \{x':\pi_1(\nu_A(x',t))=0\}$, we have
\begin{align}
\nu_F(h_*(x'),t)&=\left(\begin{array}{c}0\\\pi_N(\nu_A(x',t))\end{array}\right).\label{eqab2a}
\end{align}
\end{lem}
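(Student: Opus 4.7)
The plan is to derive both (\ref{eqab1}) and (\ref{eqab2a}) by combining the classical BV slicing theory of Lemma~\ref{lem_goodset} with a direct blow-up calculation, exploiting the pointwise identification $A = \{(x',t) : h_*(x') < r_A(t)\}$ that follows from the hypothesis $A_t = r_A(t) K_h = \{h_* < r_A(t)\}$.

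\emph{Step 1 (horizontal component).} By Lemma~\ref{lem_goodset} and the standard BV slicing identity, for a.e.\ $t$ and $\mathcal{H}^{N-2}$-a.e.\ $x' \in \partial^* A_t$ with $\pi_1(\nu_A(x',t)) \neq 0$,
\[
\nu_{A_t}(x') = \frac{\pi_1(\nu_A(x',t))}{|\pi_1(\nu_A(x',t))|}.
\]
Since $A_t = \{h_* < r_A(t)\}$ is a rescaled Wulff shape, Remark~\ref{rem_hstar} together with the $0$-homogeneity of $\nabla h_*$ gives $\nu_{A_t}(x') = \nabla h_*(x')/|\nabla h_*(x')|$, pinning down $\pi_1(\nu_A)/|\pi_1(\nu_A)|$.

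\emph{Step 2 (blow-up).} The function $h_*$ is Lipschitz (Rademacher) and $r_A \in BV$ (Lemma~\ref{lem_slice}), so both are differentiable a.e. At $\mathcal{H}^{N-1}$-a.e.\ $(x',t) \in \partial^* A$ where $h_*$ is differentiable at $x'$ (with $\nabla h_*(x') \neq 0$ by Remark~\ref{rem_hstar}) and $r_A$ at $t$, a first-order Taylor expansion of the defining inequality $h_*(\cdot) < r_A(\cdot)$, using $h_*(x') = r_A(t)$, identifies the blow-up half-space of $A$ as $\{(y,s) : \nabla h_*(x') \cdot y - r_A'(t) s < 0\}$, so that
\[
\nu_A(x',t) = \frac{(\nabla h_*(x'),\,-r_A'(t))}{\sqrt{|\nabla h_*(x')|^2 + r_A'(t)^2}}.
\]
An analogous 2D blow-up for $F$ at $(r_A(t),t)$ gives $\nu_F(r_A(t),t) = (1,\,-r_A'(t))/\sqrt{1 + r_A'(t)^2}$, with $\pi_1(\nu_F) > 0$. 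Substituting into the right-hand side of (\ref{eqab1}) and using $|\pi_1(\nu_F)\nabla h_*(x')| = \pi_1(\nu_F)|\nabla h_*(x')|$ reproduces exactly $\nu_A(x',t)/|\pi_1(\nu_A(x',t))|$.

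\emph{Step 3 (the degenerate case).} By Remark~\ref{rem_hstar}, $\nabla h_*$ does not vanish where defined, so $\pi_1(\nu_A(x',t)) = 0$ can only arise at points associated to jumps of $r_A$, where the smooth blow-up of Step~2 degenerates. At such a jump at height $t_0$, the set $F$ contains a horizontal segment along which $\nu_F = \pm e_2$ (purely vertical in 2D), and correspondingly $A$ contains a horizontal facet at $t=t_0$ along which $\nu_A = \pm e_N$. Matching the signs (both dictated by the direction of the jump in $r_A$) yields (\ref{eqab2a}).

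\emph{Main obstacle.} The principal technical difficulty lies in Step~2, where the formal blow-up identity must be justified $\mathcal{H}^{N-1}$-a.e.\ on $\partial^* A$. This requires matching null sets across the differentiability set of $h_*$ in $\mathbb{R}^{N-1}$ (Rademacher), the differentiability set of $r_A$ in $\mathbb{R}$ (Calder\'on-Zygmund for BV functions), the good sets $G(A), G(F)$ of Lemma~\ref{lem_goodset}, and the parameterization of $\partial^* A$ by $(\xi,t) \in \partial K_h \times G(A)$ via $(\xi,t) \mapsto (r_A(t)\xi, t)$. A Fubini-type argument together with the area formula applied to this parameterization, exploiting the absolute continuity of $\mathcal{H}^{N-1}\lfloor \partial^* A$ with respect to $\mathcal{H}^{N-2} \lfloor \partial K_h \otimes \mathcal{H}^1$, suffices to upgrade the pointwise blow-up computation to the required a.e.\ statement.
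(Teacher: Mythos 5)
Your proposal takes a genuinely different route from the paper. The paper proves this lemma by mollifying $\chi_F$, setting $a_n(x)=f_n(h_*(x'),t)$, computing $\int a_n\ebdiv\phi\,dx$ in two ways (once passing to the limit $\int\chi_A\ebdiv\phi\,dx$ and applying Gauss--Green plus coarea; once via chain rule, coarea for $h_*$, and the weak-$*$ convergence of $\nabla f_n\,\mathcal{L}^2\rightharpoonup D\chi_F$), and then equating. That argument handles all singular behaviour of $r_A$ (jumps, Cantor part) automatically at the level of distributions, which is precisely where your pointwise approach runs into trouble.

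The gap in your proof is in Step~2 and the closing ``Main obstacle'' paragraph. The first-order expansion $r_A(t_0+\epsilon s)=r_A(t_0)+\epsilon r_A'(t_0)s+o(\epsilon)$, used to identify the blow-up half-space of $A$, is not valid for a general $BV$ function $r_A$: the classical derivative $r_A'(t_0)$ exists for $\mathcal{L}^1$-a.e.\ $t_0$, but it only sees the absolutely continuous part of $Dr_A$, and the uniform Taylor remainder you invoke is not available (even after restricting to Lebesgue points). More concretely, the $L^1_{\loc}$ blow-up of $A$ at $(x_0',t_0)\in\partial^*A$ is what determines $\nu_A(x_0',t_0)$, and you have not shown that this blow-up agrees with the half-space obtained by formally linearizing $h_*(\cdot)<r_A(\cdot)$. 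In addition, the claim that $\mathcal{H}^{N-1}\lfloor\partial^*A$ is absolutely continuous with respect to $\mathcal{H}^{N-2}\lfloor\partial K_h\otimes\mathcal{H}^1$ is false: a jump of $r_A$ at some $t_0$ contributes a horizontal annulus in $\partial^*A$ of strictly positive $\mathcal{H}^{N-1}$-measure concentrated over the $\mathcal{H}^1$-null set $\{t_0\}$. (The coarea formula in Lemma~\ref{lem_coarea} does give a weighted absolute continuity, but only on $\partial^*A\cap\{\pi_1(\nu_A)\neq 0\}$, which is what the paper quietly exploits.) Finally, Step~3 asserts the desired identity for vertical normals by ``matching the signs'' without an actual argument; in the paper this equality falls out of the distributional computation (the term $(II)$) rather than a separate geometric inspection. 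If you want to preserve the pointwise flavour of your argument, the cleaner route is to work slice-by-slice with Lemma~\ref{lem_goodset}, Lemma~\ref{lem_coarea}, and the explicit structure $A_t=r_A(t)K_h$, $F_t=(-\infty,r_A(t))$, rather than attempting a genuine two-dimensional blow-up at individual points of $\partial^*A$.
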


\begin{proof}
Let $(\rho_n)\subset C_0^\infty(\mathbb{R}^2)$ be a sequence of mollifiers and set $f_n=\rho_n*\chi_F$ for each $n\in\mathbb{N}$.  Moreover, define $a_n:\mathbb{R}^N\rightarrow\mathbb{R}$ by
\begin{align*}
a_n(x)=f_n(h_*(x'),t).
\end{align*}

Fix $\phi\in C_0^1(\mathbb{R}^N;\mathbb{R}^N)$.  By dominated convergence, we have
\begin{align}
\lim_{n\rightarrow\infty} \int_{\mathbb{R}^N} a_n(x)\ebdiv_x \phi(x)dx&=\int_{\mathbb{R}^N} \chi_A(x)\ebdiv_x \phi(x)dx.\label{eqab0}
\end{align}
This quantity is then equal to 
\begin{align}
\nonumber &\int_{\partial^*A} \phi(x)\cdot \nu_A(x)d\mathcal{H}^{N-1}(x)\\
\nonumber &\hspace{0.2in}=\int_0^\infty \int_{\partial^*A_t} \phi(x',t)\cdot \frac{\nu_A(x',t)}{|\pi_1(\nu_A(x',t))|}d\mathcal{H}^{N-2}(x')dt\\
&\hspace{0.4in}+\int_{\partial^*A\cap \{\pi_1(\nu_A(x))=0\}} \pi_N(\phi(x))\pi_N(\nu_A(x))d\mathcal{H}^{N-1}(x),\label{eqab0a}
\end{align}
where we have used the Gauss-Green formula for sets of finite perimeter and Lemma $\ref{lem_coarea}$.
 
On the other hand, for every $n\in \mathbb{N}$, integration by parts and the chain rule give
\begin{align}
\nonumber &\int_{\mathbb{R}^N} a_n(x)\ebdiv_x \phi(x)dx=-\int_{\mathbb{R}^N} \nabla_x [a_n(x)]\cdot \phi(x)dx\\
\nonumber &\hspace{0.2in}=-\int_{\mathbb{R}}\int_{\mathbb{R}^{N-1}} (\partial_1 f_n)(h_*(x'),t)(\nabla h_*)(x')\cdot \pi_1(\phi(x',t))dx'dt\\
&\hspace{1.4in}-\int_{\mathbb{R}}\int_{\mathbb{R}^N}(\partial_2 f_n)(h_*(x'),t)\pi_N(\phi(x',t))dx'dt.\label{eqa2}
\end{align}
Setting 
\begin{align*}
b(y,t):=\left(\begin{array}{cc} \nabla h_*(y)\cdot \pi_1(\phi(y,t))\\ \pi_N(\phi(y,t))\end{array}\right),
\end{align*}
and invoking the co-area formula, the right hand side of ($\ref{eqa2}$) becomes
\begin{align}
\label{eqa3}-\int_{\mathbb{R}}\int_{0}^\infty\int_{h_*^{-1}(\{s\})} \frac{1}{|\nabla h_*(y)|}b(y,t)\cdot \nabla f_n(s,t)d\mathcal{H}^{N-2}(y)dsdt.
\end{align}

We now let $n\rightarrow\infty$.  More precisely, noting that 
\begin{align*}
(s,t)\mapsto \int_{h_*^{-1}(\{s\})} b(y,t)d\mathcal{H}^{N-2}(y)
\end{align*}
is continuous as a map from $\mathbb{R}^2$ to $\mathbb{R}^2$, we use the weak-* convergence of the measure $\nabla f_n(s,t)\mathcal{L}^2(s,t)$ to $D\chi_F(s,t)$ to obtain that ($\ref{eqa3}$) tends to 
\begin{align}
-\int_{\mathbb{R}\times [0,\infty)}\int_{h_*^{-1}(\{s\})} \frac{b(y,t)\cdot \nu_F(h_*(y),t)\chi_{\partial^*F}(h_*(y),t)}{|\nabla h_*(y)|}d\mathcal{H}^{N-2}(y)d\mathcal{H}^{1}(s,t)\label{eqa4}
\end{align}
as $n\rightarrow \infty$.  Then, applying Lemma $\ref{lem_coarea}$ and using the coarea formula, we obtain 
\begin{align}
\lim_{n\rightarrow\infty} \int_{\mathbb{R}^N} a_n(x)\ebdiv_x\phi(x)dx&=(I)+(II),\label{eqab0b}
\end{align}
where
\begin{align*}
(I)&:=-\int_{\mathbb{R}}\int_{\mathbb{R}^{N-1}} \frac{b(x',t)\cdot \nu_F(h_*(x'),t)\chi_{\partial^*F}(h_*(x'),t)}{|\pi_1(\nu_F(h_*(x'),t))\nabla h_*(y)|} d\mathcal{H}^{N-2}(x')dt\\
&=-\int_{\mathbb{R}}\int_{\partial^*A_t} \frac{b(x',t)\cdot \nu_F(h_*(x'),t)}{|\pi_1(\nu_F(h_*(x'),t))\nabla h_*(x')|} d\mathcal{H}^{N-2}(x')dt
\end{align*}
and
\begin{align*}
(II)&:= -\int_{\{(x',t):\pi_1(\nu_F(h_*(x'),t))=0\}} \pi_N(\phi(x',t))\pi_N(\nu_F(h_*(x'),t))\\
&\hspace{2.7in}\chi_{\partial^*F}(h_*(x'),t)d\mathcal{H}^{N-1}(x',t)\\
&= -\int_{\partial^*A\cap \{\pi_1(\nu_A(x',t))=0\}} \pi_N(\phi(x',t))\pi_N(\nu_F(h_*(x'),t))d\mathcal{H}^{N-1}(x',t).
\end{align*}
Combining ($\ref{eqab0}$), ($\ref{eqab0a}$), and ($\ref{eqab0b}$) and recalling that $\phi$ was arbitrary, we obtain ($\ref{eqab1}$) and the equality of the second components in ($\ref{eqab2a}$). 
To obtain the equality for the first components of ($\ref{eqab2a}$), we note that $\pi_1(\nu_A(x',t))=0$ and $|\nu_A(x',t)|=1$ imply $|\pi_N(\nu_F(h_*(x'),t)|=|\pi_N(\nu_A(x',t))|=1$; the result then follows by recalling that $\nu_F(h_*(x'),t)$ is a unit vector.
\end{proof}

In the next lemma, we show that for almost every slice $A_t$ of $A$ the integral over $\partial^*A_t$ appearing in the coarea formula of Lemma $\ref{lem_coarea}$ is reduced by the symmetrization.  The proof is based on an application of the anisotropic isoperimetric (Wulff) inequality, and the following form of Jensen's inequality, which we recall for the convenience of the reader.
\begin{proposition}[Jensen's inequality] 
\label{prop_jensen}
Let $(X,\mu,\mathcal{M})$ be a measure space with $\mu(X)<\infty$, and suppose that $\phi:\mathbb{R}\times \mathbb{R}\rightarrow\mathbb{R}$ is convex and positively $1$-homogeneous.  Then for every $\mu$-measurable $f,g:X\rightarrow [0,\infty)$, we have
\begin{align*}
\phi\left(\int_X f(x)d\mu(x),\int_X g(x)d\mu(x)\right)\leq \int_X \phi\big(f(x),g(x)\big)d\mu(x)
\end{align*}

Moreover, if $\phi$ is strictly convex in either argument then equality holds if and only if $f/g$ is constant $\mu$-a.e. on $X$.
\end{proposition}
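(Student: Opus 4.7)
The plan is to derive both parts from a single supporting-hyperplane argument exploiting the dual representation of convex $1$-homogeneous functions. By $1$-homogeneity, I may replace $\mu$ by $\mu/\mu(X)$ and multiply back at the end, so assume $\mu(X) = 1$. Set $v_0 := (\int f\, d\mu, \int g\, d\mu) \in [0, \infty)^2$; since $\phi$ is finite convex on $\mathbb{R}^2$, the subdifferential $\partial \phi(v_0)$ is non-empty, and any $w_0 \in \partial \phi(v_0)$ yields the affine minorant $\phi(v) \geq \phi(v_0) + w_0 \cdot (v - v_0)$ for all $v \in \mathbb{R}^2$. Evaluating at $v = (f(x), g(x))$, integrating against $d\mu$, and using $\int (f, g)\, d\mu = v_0$ gives $\int \phi(f, g)\, d\mu \geq \phi(v_0)$, which is the desired inequality.

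If equality holds, the integrated inequality is an equality, forcing $\phi(f(x), g(x)) = \phi(v_0) + w_0 \cdot ((f(x), g(x)) - v_0)$ for $\mu$-a.e. $x$, i.e., $w_0 \in \partial \phi(f(x), g(x))$ $\mu$-a.e. Define the closed set $R := \{v \in \mathbb{R}^2 \setminus \{0\} : w_0 \in \partial \phi(v)\}$. By $1$-homogeneity of $\phi$, one has $\partial \phi(\lambda v) = \partial \phi(v)$ for all $\lambda > 0$, so $R$ is a union of open rays from the origin, and $\phi$ coincides on $R \cup \{0\}$ with the linear function $v \mapsto w_0 \cdot v$. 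If $R$ contained two distinct rays through points $v_1, v_2$, a short convexity computation (using that $\phi \geq w_0 \cdot v$ globally with equality at both $v_1, v_2$, combined with convexity of $\phi$) shows $\phi$ would equal $w_0 \cdot v$ on the entire two-dimensional convex cone generated by $v_1$ and $v_2$; since this cone has non-empty interior, some strip $\{(s, t_0) : a < s < b\}$ lies in it, so $s \mapsto \phi(s, t_0)$ would be affine in $s$ on $(a, b)$, contradicting strict convexity of $\phi$ in the first argument. Hence $R$ is contained in a single ray, and $(f, g)$ lies on that ray $\mu$-a.e., which is precisely the assertion that $f/g$ is $\mu$-a.e. constant (interpreted in the natural ray-sense that accommodates the edge cases $f \equiv 0$ or $g \equiv 0$). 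The converse implication is immediate from $1$-homogeneity: if $(f, g) = c \cdot (s_0, t_0)$ for a scalar function $c \geq 0$, then $\phi(f, g) = c\, \phi(s_0, t_0)$ and both sides of Jensen's inequality reduce to $\phi(s_0, t_0) \int c\, d\mu$.

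The main obstacle is the equality case: showing that two distinct rays in $R$ lead to a contradiction requires exploiting the two-dimensional geometry of the cone generated by the rays (together with its non-empty interior) and carefully handling the case that one of the rays lies on an axis of $[0, \infty)^2$ -- here the ambiguity in how to interpret ``$f/g$ constant'' must be treated with care. Once this ray-uniqueness is established and combined with the $1$-homogeneity-based ray-invariance of $\partial \phi$, the rest of the argument is routine.
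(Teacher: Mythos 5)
The paper records this proposition without proof (it is ``recalled for the convenience of the reader''), so there is no internal argument to compare against; your proof must be judged on its own. The supporting-hyperplane argument you give for the inequality is the standard route and is correct, using that $w_0\in\partial\phi(v_0)$ exists for a finite convex function and that positive $1$-homogeneity forces $\phi(v_0)=w_0\cdot v_0$, whence $\phi(v)\geq w_0\cdot v$ globally. The equality analysis is essentially right as well: equality of the integrals forces pointwise equality of the nonnegative integrand, hence $w_0\in\partial\phi(f(x),g(x))$ a.e.; $\partial\phi$ is constant along open rays by $1$-homogeneity; and two distinct rays on which $\phi$ agrees with the linear minorant $w_0\cdot v$ would, by convexity and the global bound $\phi\geq w_0\cdot v$, force $\phi=w_0\cdot v$ on the whole cone they span.

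Two small points should be made explicit to close the argument cleanly. First, positive $1$-homogeneity makes $s\mapsto\phi(s,0)$ piecewise linear (it equals $s\,\phi(1,0)$ for $s\geq 0$), so ``strict convexity in the first argument'' can only mean strict convexity of $s\mapsto\phi(s,t_0)$ for $t_0\neq 0$; your horizontal segment therefore must be taken at height $t_0>0$. This is automatic in your setting because the interior of the cone spanned by two distinct rays of $[0,\infty)^2\setminus\{0\}$ lies in the open quadrant $(0,\infty)^2$, but it deserves a sentence. Second, you spell out only the ``strictly convex in the first argument'' case; the statement hypothesizes strict convexity in \emph{either} argument, and the second-argument case is handled symmetrically by choosing a vertical segment $\{(s_0,t):a<t<b\}$ with $s_0>0$ inside the cone. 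With these two remarks inserted, the proof is complete, including the routine converse and the sensible ray-interpretation of ``$f/g$ constant'' when one of $f$, $g$ vanishes a.e.
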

\begin{lem}
\label{lem_symm}
Fix $m>0$, $\omega\in (-f(e_N),f(-e_N))$ and let $f$ be given admissible with $f(x)=\phi(h(x'),x_N)$.  Let $A\subset\mathbb{R}^N$ be a given set of finite perimeter with $|A|<\infty$.  Moreover, let $G(A)$, $G(A^*)$ be as in Lemma \ref{lem_goodset}, and set $G=G(A)\cap G(A^*)$.  Then, for every $t\in G$,
\begin{align}
\label{eqclaim1}
\int_{(\partial^*(A^*))_{t}} \frac{f(\nu_{A^*}(x',t))}{|\pi_1(\nu_{A^*}(x',t))|}d\mathcal{H}^{N-2}(x')&\leq \int_{(\partial^*A)_{t}} \frac{f(\nu_A(x',t))}{|\pi_1(\nu_A(x',t))|}d\mathcal{H}^{N-2}(x').
\end{align}
Moreover, if equality holds for some $t\in G$, then $A_{t}$ is equal to $K_h$ up to translation and dilation.
\end{lem}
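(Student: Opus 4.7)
The plan is to combine a slice-wise Jensen inequality with the anisotropic (Wulff) isoperimetric inequality in $\mathbb{R}^{N-1}$, and to match the resulting lower bound to the left-hand side of (\ref{eqclaim1}) via the explicit symmetric structure of $A^*$. Fix $t\in G$. By Lemma \ref{lem_goodset}(ii)--(iii), the reduced boundaries $(\partial^*A)_t$ and $\partial^*A_t$ agree up to an $\mathcal{H}^{N-2}$-null set, and for $\mathcal{H}^{N-2}$-a.e.\ $x'\in\partial^*A_t$ one has $\pi_1(\nu_A(x',t))\neq 0$ together with the slicing identity $\nu_{A_t}(x')=\pi_1(\nu_A(x',t))/|\pi_1(\nu_A(x',t))|$. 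Using the positive $1$-homogeneity of $h$ and of $\phi$ on $\Rng(h)\times\mathbb{R}$, the integrand on the right-hand side of (\ref{eqclaim1}) can be rewritten as
\[
\frac{f(\nu_A(x',t))}{|\pi_1(\nu_A(x',t))|}\;=\;\phi\bigl(h(\nu_{A_t}(x')),\,\pi_N(\nu_A(x',t))/|\pi_1(\nu_A(x',t))|\bigr).
\]
Applying Proposition \ref{prop_jensen} to the measure $\mathcal{H}^{N-2}$ on $\partial^*A_t$ with $f_1=h(\nu_{A_t})$ and $f_2=\pi_N(\nu_A)/|\pi_1(\nu_A)|$ (the dual representation of a positively $1$-homogeneous convex $\phi$ extends Proposition \ref{prop_jensen} to signed $f_2$ by a direct linearity argument), I would obtain
\[
\int_{\partial^*A_t}\frac{f(\nu_A)}{|\pi_1(\nu_A)|}\,d\mathcal{H}^{N-2}\;\ge\;\phi\bigl(P_h(A_t),\,-v_A'(t)\bigr),
\]
where $P_h(A_t)=\int_{\partial^*A_t}h(\nu_{A_t})\,d\mathcal{H}^{N-2}$ is the anisotropic Wulff perimeter of $A_t$ in $\mathbb{R}^{N-1}$, and Lemma \ref{lem_slice} identifies the second integral with $-v_A'(t)$.

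The Wulff isoperimetric inequality in $\mathbb{R}^{N-1}$, applied to $A_t$ and the rescaled Wulff shape $A_t^*=r_A(t)K_h$ (which have the same $(N-1)$-volume by construction of $r_A$), yields $P_h(A_t)\geq P_h(A_t^*)=r_A(t)^{N-2}P_h(K_h)$, with equality iff $A_t$ is a translate of $r_A(t)K_h$. Strict monotonicity of $s\mapsto\phi(s,\cdot)$ on $[0,\infty)$ --- a consequence of $1$-homogeneity together with strict convexity of $\phi$ and the admissibility condition $\partial_1\phi(0,\pm 1)=0$ --- upgrades this to $\phi(P_h(A_t),-v_A'(t))\geq\phi(P_h(A_t^*),-v_A'(t))$. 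To close the argument, I would verify that this last quantity coincides with the left-hand side of (\ref{eqclaim1}). Applying Lemma \ref{lem_41} to $A^*$ with $F=\{(s,t):s<r_A(t)\}$, at $\mathcal{H}^{N-2}$-a.e.\ $x'\in r_A(t)\partial K_h$ one computes $\nu_{A^*}(x',t)/|\pi_1(\nu_{A^*})|=(\nabla h_*(x'),-r_A'(t))/|\nabla h_*(x')|$; together with Remark \ref{rem_hstar} (so that $h(\nabla h_*)=1$) and the $1$-homogeneity of $\phi$, the integrand on the left-hand side of (\ref{eqclaim1}) reduces to $\phi(1,-r_A'(t))/|\nabla h_*(x')|$. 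Integrating over $r_A(t)\partial K_h$ and using $\int_{r\partial K_h}|\nabla h_*|^{-1}\,d\mathcal{H}^{N-2}=r^{N-2}P_h(K_h)$, the classical identity $P_h(K_h)=(N-1)|K_h|$, and the formula $v_{A^*}'(t)=(N-1)r_A(t)^{N-2}r_A'(t)|K_h|$ (obtained from Lemma \ref{lem_slice} applied to $A^*$, and equal to $v_A'(t)$ by construction of $A^*$), the $1$-homogeneity of $\phi$ reassembles the expression into $\phi(P_h(A_t^*),-v_A'(t))$, completing the proof of (\ref{eqclaim1}).

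For the equality case, if equality holds in (\ref{eqclaim1}) then both the Jensen step and the Wulff-monotonicity step must hold with equality; the strict monotonicity of $\phi(\cdot,s)$ noted above forces $P_h(A_t)=P_h(A_t^*)$, and rigidity of the Wulff isoperimetric inequality yields that $A_t$ equals $K_h$ up to translation and dilation. The principal obstacle I anticipate is the bookkeeping in the identification step: one must carefully combine the explicit normal formula for $A^*$ provided by Lemma \ref{lem_41}, the scaling $P_h(rK_h)=r^{N-2}P_h(K_h)$, and the identity $P_h(K_h)=(N-1)|K_h|$ with the $1$-homogeneity of $\phi$ so that the slice integral for $A^*$ reassembles precisely as $\phi(P_h(A_t^*),-v_A'(t))$. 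A secondary subtlety, resolved at a.e.\ slice $t$ by Lemma \ref{lem_goodset}(iii), is the treatment of points on $\partial^*A$ where $\pi_1(\nu_A)=0$.
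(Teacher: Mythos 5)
Your proposal is correct and uses essentially the same ingredients as the paper's proof: Jensen's inequality (Proposition \ref{prop_jensen}), the Wulff isoperimetric inequality in $\mathbb{R}^{N-1}$, the explicit normal formula for $A^*$ from Lemma \ref{lem_41} together with Remark \ref{rem_hstar}, and monotonicity of $\phi(\cdot,y)$ from admissibility. The only difference is organizational: you run the chain of inequalities from right to left (lower-bounding the integral over $\partial^*A_t$ and then identifying the result with the integral over $\partial^*A^*_t$, which makes you invoke the scaling identities $P_h(rK_h)=r^{N-2}P_h(K_h)$ and $P_h(K_h)=(N-1)|K_h|$ explicitly), whereas the paper starts from the left side and shows by direct computation that it equals $\phi\bigl(\int_{\partial^*A^*_t}h(\pi_1(\nu_{A^*})/|\pi_1(\nu_{A^*})|)\,d\mathcal{H}^{N-2},\,-v_A'(t)\bigr)$ before applying Wulff and Jensen, thereby avoiding the need for these scaling identities.
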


\begin{proof}
Let $t\in G$ be given.  Note that $A^*$ satisfies the hypotheses of Lemma $\ref{lem_41}$ with $r(t)=(|A_{t}|/|K_h|)^\frac{1}{N-1}$.
To simplify our notation, we set $\nu_1=\pi_1(\nu_F(r(t),t)$ and $\nu_2=\pi_2(\nu_F(r(t),t))$, with $F$ as in Lemma $\ref{lem_41}$.  
Now, using Lemma $\ref{lem_41}$ and Remark $\ref{rem_hstar}$
\begin{align}
\nonumber &\int_{\partial^*A^*_{t}} \frac{f(\nu_{A^*}(x',t))}{|\pi_1(\nu_{A^*}(x',t))|}d\mathcal{H}^{N-2}(x')\\
\nonumber &\hspace{0.4in}=\int_{\partial^*A^*_{t}} \phi\left(h\left(\frac{\pi_1(\nu_{A^*}(x',t))}{|\pi_1(\nu_{A^*}(x',t)|}\right),\frac{\pi_N(\nu_{A^*}(x',t))}{|\pi_1(\nu_{A^*}(x',t))|}\right)d\mathcal{H}^{N-2}(x')\\
\nonumber &\hspace{0.4in}=\int_{\partial^*A^*_{t}} \phi\left(\frac{\nu_1h(\nabla h_*(x'))}{|\nu_1\nabla h_*(x')|},\frac{\nu_2}{|\nu_1\nabla h_*(x')|}\right)d\mathcal{H}^{N-2}(x')\\
\nonumber &\hspace{0.4in}=\phi(\nu_1,\nu_2)\int_{\partial^*A^*_{t}} \frac{1}{|\nu_1\nabla h_*(x')|}d\mathcal{H}^{N-2}(x')\\
\nonumber &\hspace{0.4in}=\phi\bigg(\int_{\partial^*A^*_{t}} \frac{\nu_1}{|\nu_1\nabla h_*(x')|}d\mathcal{H}^{N-2}(x'),\int_{(\partial^*(A^*))_{t}}\frac{\nu_2}{|\nu_1\nabla h_*(x')|}d\mathcal{H}^{N-2}(x')\bigg)\\
&\hspace{0.4in}=\phi\bigg(\int_{\partial^*A^*_{t}} h\bigg(\frac{\pi_1(\nu_{A^*}(x',t))}{|\pi_1(\nu_{A^*}(x',t))|}\bigg)d\mathcal{H}^{N-2}(x'),-v_A'(t)\bigg),\label{eq3p}
\end{align}
where $v_A'$ is as in Lemma $\ref{lem_slice}$.  We remark that to obtain the last equality we have used Remark $\ref{rem_hstar}$ once again.

Applying the anisotropic isoperimetric inequality along with the monotonicity of $x\mapsto \phi(x,y)$, we bound the right hand side of ($\ref{eq3p}$) by 
\begin{align}
\nonumber &\phi\bigg(\int_{\partial^*A_{t}} h\bigg(\frac{\pi_1(\nu_A(x',t))}{|\pi_1(\nu_A(x',t))|}\bigg)d\mathcal{H}^{N-2}(x'),-v_A'(t)\bigg)\\
\nonumber &\hspace{0.4in}=\phi\bigg(\int_{\partial^*A_{t}} h\bigg(\frac{\pi_1(\nu_A(x',t))}{|\pi_1(\nu_A(x',t))|}\bigg)d\mathcal{H}^{N-2}(x'),\\
\nonumber &\hspace{1.4in}\int_{\partial^*A_{t}} \frac{\pi_N(\nu_A(x',t))}{|\pi_1(\nu_A(x',t))|}d\mathcal{H}^{N-2}(x')\bigg)\\
\nonumber &\hspace{0.4in}\leq \int_{\partial^*A_{t}} \phi\bigg(h\bigg(\frac{\pi_1(\nu_A(x',t))}{|\pi_1(\nu_A(x',t))|}\bigg),\frac{\pi_N(\nu_A(x',t))}{|\pi_1(\nu_A(x',t))|}\bigg)d\mathcal{H}^{N-2}(x')\\
\nonumber &\hspace{0.4in}=\int_{\partial^*A_{t}} \frac{f(\nu_A(x',t))}{|\pi_1(\nu_A(x',t))|}d\mathcal{H}^{N-2}(x')
\end{align}
where we have used Proposition $\ref{prop_jensen}$.

It now remains to verify the characterization of the equality case. Suppose that equality holds in ($\ref{eqclaim1}$) for some $t\in G$.  We must then have equality in the anisotropic isoperimetric inequality, so that the strict monotonicity of $x\mapsto \phi(x,y)$ implies
\begin{align*}
\int_{\partial^*A^*_{t}}h\bigg(\frac{\pi_1(\nu_{A^*}(x',t))}{|\pi_1(\nu_{A^*}(x',t))|}\bigg)d\mathcal{H}^{N-2}(x')=\int_{\partial^*A_{t}}h\bigg(\frac{\pi_1(\nu_A(x',t))}{|\pi_1(\nu_A(x',t))|}\bigg)d\mathcal{H}^{N-2}(x').
\end{align*}
The characterization of the Wulff shape $K_h$ as the unique minimizer for the \\anisotropic isoperimetric inequality (see, e.g. \cite{Taylor}, \cite{Fonseca}, \cite{FonsecaMuller}) then implies the desired claim.
\end{proof}

Equipped with this lemma, we turn to the proof of the main symmetrization result, Theorem $\ref{thm_symm_f}$.  The first part of the proof is based on an approximation procedure, reducing considerations to the case of polyhedral sets with no vertical normals.  For similar arguments, see the proofs of Lemma $3.5$ in \cite{CCF} and Lemma $3.3$ in \cite{BCF}.
\begin{proof}[Proof of Theorem $\ref{thm_symm_f}$.]
Let $A\subset\mathbb{R}^N$ be a given set of finite perimeter.  We approximate $A$ by a sequence of polyhedral sets $A_n$ such that
\begin{align*}
|A_n\Delta A|\rightarrow 0,\quad \textrm{and}\quad \mathscr{F}_s(A_n)\rightarrow\mathscr{F}_s(A)
\end{align*} 
as $n\rightarrow \infty$ (see, for instance, \cite[Proposition $4.9$]{Cerf}).  Note that without loss of generality (up to small perturbations of the faces), we may assume that the sets $A_n$ possess no vertical normals.  

For each $n\in\mathbb{N}$, we apply Lemmas $\ref{lem_coarea}$ and $\ref{lem_symm}$ to obtain
\begin{align*}
\mathscr{F}_s(A_n)&=\int_0^\infty \int_{\partial^*(A_n)_t} \frac{f(\nu_{A_n}(x',t))}{|\pi_1(\nu_{A_n}(x',t))|}d\mathcal{H}^{N-2}(x')dt\\
&\geq \int_0^\infty \int_{\partial^*(A_n^*)_t} \frac{f(\nu_{A_n^*}(x',t))}{|\pi_1(\nu_{A_n^*}(x',t))|}d\mathcal{H}^{N-2}(x')dt\\
&=\mathscr{F}_s(A_n^*),
\end{align*}
and hence 
\begin{align}
\mathscr{F}_s(A)=\lim_{n\rightarrow\infty} \mathscr{F}_s(A_n)\geq \limsup_{n\rightarrow\infty} \mathscr{F}_s(A_n^*).\label{eq12}
\end{align}

Next, note that for each $n\in\mathbb{N}$,
\begin{align*}
|A_n^*\Delta A^*|&=\int_{0}^\infty |v_{A_n^*}(t)-v_{A^*}(t)|dt\\
&\leq \int_0^\infty \mathcal{H}^{N-1}((A_n)_t\Delta A_t)dt=|A_n\Delta A|.
\end{align*}
Letting $n\rightarrow 0$, we obtain $|A_n^*\Delta A^*|\rightarrow 0$.  Then, writing
\begin{align*}
&\int_{\partial^*F\cap \{x:x_N>0\}} f(\nu_E(x))d\mathcal{H}^{N-1}(x)\\
&\hspace{0.2in}=\sup \left\{\int_F \ebdiv \phi(x)dx:\phi\in C_c^1(\{x:x_N>0\};K)\right\}
\end{align*}
for $F\in\mathcal{F}(\{x:x_N>0\})$, the Reshetnyak lower-semicontinuity theorems show that $\mathscr{F}_s$ is lower-semicontinuous with respect to $L_{\loc}^1$ convergence.  We therefore obtain
\begin{align*}
\mathscr{F}_s(A^*)\leq \liminf_{n\rightarrow\infty} \mathscr{F}_s(A_n^*).
\end{align*}
Combining this with ($\ref{eq12}$), we obtain
\begin{align}
\mathscr{F}_s(A)\geq \mathscr{F}_s(A^*).\label{eqsymm1}
\end{align}

On the other hand, by Definition $\ref{defsymm}$ we have
\begin{align*}
\mathcal{H}^{N-1}(\partial^*A\cap \{x:x_N=0\})=v_{A}^+(0)=\mathcal{H}^{N-1}(\partial^*A^*\cap \{x:x_N=0\}),
\end{align*}
while an application of Fubini's theorem yields $\mathscr{F}_p(A)=\mathscr{F}_p(A^*)$.  Combining these equalities with ($\ref{eqsymm1}$), the desired inequality ($\ref{eqsymmgoal}$) follows.
\end{proof}

\section{Existence of minimizers.}
\label{sec_exist}

In this section, we use the symmetrization results of Section $3$ to prove Theorem $\ref{thm_exist}$.  As mentioned in the introduction, the proof uses the direct method of the Calculus of Variations, using the results of the previous section to establish compactness and lower-semicontinuity properties for a minimizing sequence.  As mentioned in the introduction, this line of reasoning is inspired by the similar use of symmetrization to prove existence of minimizers for the isotropic problem in \cite{Gonzalez}.

We remark that when $\omega<0$, the lower-semicontinuity does not follow immediately from the classical lower-semicontinuity of the anisotropic perimeter (since the inequality holds in the opposite direction).  In this case, we use a calibration-style argument to compare the contact energy with a portion of the surface energy (see \cite{Gonzalez}, \cite{GonzalezMassariTamanini} and the references cited therein, and in particular the very clear treatment in \cite{MaggiBook}).  Note that the lack of symmetry in the anisotropic setting requires some care in choosing the appropriate vector field; we point out in particular the estimates ($\ref{aa1}$) and ($\ref{aa2}$).

\begin{proof}[Proof of Theorem $\ref{thm_exist}$]
We first obtain a lower bound for the values of $\mathscr{F}$.  Recalling that 
\begin{align}
\label{wulff_f}f(\nu)=\sup\{\nu\cdot x:x\in K\},\quad \nu\in\mathbb{R}^N,
\end{align}
we may choose $v\in \overline{K}$ such that $e_N\cdot v=f(e_N)$.  We then apply the divergence theorem to the constant vector field $x\mapsto v$, obtaining
\begin{align}
\nonumber 0=\int_E\ebdiv vdx&=\int_{\partial^*E\cap \{x:x_N=0\}} -f(e_N)d\mathcal{H}^{N-1}(x)\\
\nonumber &\hspace{0.2in}+\int_{\partial^*E\cap \{x:x_N>0\}} v\cdot \nu_E(x)d\mathcal{H}^{N-1}(x)\\
\nonumber &\leq -f(e_N)\mathcal{H}^{N-1}(\partial^*E\cap \{x:x_N=0\})\\
&\hspace{0.2in}+\int_{\partial^*E\cap \{x:x_N>0\}} f(\nu_E(x))d\mathcal{H}^{N-1}(x)\label{tracebd}
\end{align}
for every $E\in\mathcal{F}_m$, where we have used ($\ref{wulff_f}$) to obtain the last inequality.  Invoking $\omega>-f(e_N)$, we therefore obtain $\mathscr{F}(E)\geq 0$ for all $E\in\mathcal{F}_m$.

Moreover, $\mathscr{F}$ is lower-semicontinuous.  Indeed, it suffices to show the lower-semicontinuity of the functional 
\begin{align*}
E\mapsto \mathscr{F}_0(E):=\mathscr{F}_s(E)+\mathscr{F}_c(E).
\end{align*}
For $\omega\geq 0$, this follows by writing
\begin{align*}
\mathscr{F}_0(E)&=\left(1-\frac{\omega}{f(-e_N)}\right)\mathcal{F}_s(E)\\
&\hspace{0.2in}+\frac{\omega}{f(-e_N)}(f(-e_N)\mathcal{H}^{N-1}(\partial^*E\cap \{x:x_N=0\})+\mathcal{F}_s(E))\\
&=\left(1-\frac{\omega}{f(-e_N)}\right)\mathcal{F}_s(E)+\frac{\omega}{f(-e_N)}\int_{\partial^*E} f(\nu_E(x))d\mathcal{H}^{N-1}(x)
\end{align*}
and using the lower-semicontinuity of $\mathcal{F}_s(E)$ and $\int_{\partial^*E} f(\nu_E(x))d\mathcal{H}^{N-1}(x)$.  Turning to the case $\omega<0$, let $(E_n)$ be a sequence of sets of finite perimeter with $|E_n\Delta E|\rightarrow 0$.  

Choosing $v\in \overline{K}$ such that $v\cdot e_N=f(e_N)$, we obtain
\begin{align}
\nonumber &-\frac{f(e_N)}{\delta}|E_n\cap\{x:0<x_N<\delta\}|\\
\nonumber &\hspace{0.2in}=\int_{E_n}\ebdiv[\max\{1-\frac{x_N}{\delta},0\}v]dx\\
\nonumber &\hspace{0.2in}\leq -f(e_N)\mathcal{H}^{N-1}(\partial^*E_n\cap \{x:x_N=0\})\\
&\hspace{0.4in}+\int_{\partial^*E_n\cap \{x:0<x_N\leq \delta\}}f(\nu_{E_n}(x))d\mathcal{H}^{N-1}(x)\label{aa1}
\end{align}
On the other hand, choosing $v'\in \overline{K}$ such that $-v'\cdot e_N=f(-e_N)$, we find the inequality
\begin{align}
\nonumber &\frac{f(-e_N)}{\delta}|E\cap\{x:0<x_N<\delta\}|\\
\nonumber &\hspace{0.2in}=\int_{E}\ebdiv[\max\{1-\frac{x_N}{\delta},0\}v']dx\\
\nonumber &\hspace{0.2in}\leq f(-e_N)\mathcal{H}^{N-1}(\partial^*E\cap \{x:x_N=0\})\\
&\hspace{0.4in}+\int_{\partial^*E\cap \{x:0<x_N\leq \delta\}}f(\nu_{E}(x))d\mathcal{H}^{N-1}(x)\label{aa2}
\end{align}
We therefore have
\begin{align*}
&\mathcal{H}^{N-1}(\partial^*E_n\cap \{x:x_N=0\})-\mathcal{H}^{N-1}(\partial^*E\cap \{x:x_N=0\})\\
&\hspace{0.2in}\leq \frac{1}{\delta}(|E_n\cap \{x:0<x_N<\delta\}|-|E\cap \{x:0<x_N<\delta\}|)\\
&\hspace{0.4in}+\frac{1}{f(e_N)}\int_{\partial^*E_n\cap \{x:0<x_N\leq \delta\}}f(\nu_{E_n}(x))d\mathcal{H}^{N-1}(x)\\
&\hspace{0.4in}+\frac{1}{f(-e_N)}\int_{\partial^*E\cap \{0<x_N\leq \delta\}}f(\nu_E(x))d\mathcal{H}^{N-1}(x).
\end{align*}
Recalling that we have assumed $\omega<0$ and $\frac{\omega}{f(e_N)}>-1$, we now estimate, for each $n\in\mathbb{N}$,
\begin{align*}
\mathscr{F}_0(E_n)&=\mathscr{F}_0(E)+\mathscr{F}_s(E_n)-\mathscr{F}_s(E)+\omega\mathcal{H}^{N-1}(\partial^*E_n\cap \{x:x_N=0\})\\
&\hspace{0.2in}-\omega\mathcal{H}^{N-1}(\partial^*E\cap \{x:x_N=0\})\\
&\geq \mathscr{F}_0(E)+\int_{\partial^*E_n\cap \{x:x_N>\delta\}}f(\nu_{E_n}(x))d\mathcal{H}^{N-1}(x)-\mathscr{F}_s(E)\\
&\hspace{0.2in}+\frac{\omega}{\delta}(|E_n\cap\{x:0<x_N<\delta\}|-|E\cap \{x:0<x_N<\delta\}|)\\
&\hspace{0.2in}+\frac{\omega}{f(-e_N)}\int_{\partial^*E\cap\{x:0<x_N\leq \delta\}} f(\nu_E(x))d\mathcal{H}^{N-1}(x).
\end{align*}
Letting $n\rightarrow\infty$, we obtain
\begin{align*}
\liminf_{n\rightarrow\infty}\mathscr{F}_0(E_n)&\geq \mathscr{F}_0(E)+\int_{\partial^*E\cap \{x:x_N>\delta\}}f(\nu_E(x))d\mathcal{H}^{N-1}(x)-\mathscr{F}_s(E)\\
&\hspace{0.2in}+\frac{\omega}{f(-e_N)}\int_{\partial^*E\cap\{x:0<x_N\leq \delta\}} f(\nu_E(x))d\mathcal{H}^{N-1}(x),
\end{align*}
where we have used the lower semicontinuity of 
\begin{align*}
E\mapsto \int_{\partial^*E\cap \{x:x_N>\delta\}}f(\nu_E(x))d\mathcal{H}^{N-1}(x).
\end{align*}
Taking $\delta\rightarrow 0$ now yields
\begin{align*}
\liminf_{n\rightarrow\infty}\mathscr{F}_0(E_n)\geq \mathscr{F}_0(E),
\end{align*}
giving the desired lower semicontinuity for $\mathscr{F}_0$.

Choose a minimizing sequence $(E_n)\subset\mathcal{F}_m$ such that $\mathscr{F}(E_n)\rightarrow \mathscr{E}:=\inf_{E\in\mathcal{F}_m} \mathscr{F}(E)$, and let $E_n^*$ be the anisotropic symmetrization of $E_n$ for each $n\geq 1$.  The results of the previous section show that $E_n^*\in \mathcal{F}_m$ and
\begin{align}
\mathscr{F}(E_n^*)\rightarrow \mathscr{E}
\label{entoe}
\end{align}
as $n\rightarrow\infty$.  

By a standard application of compactness results, it suffices to show that for every $\epsilon>0$, there exists $T,R>0$ such that
\begin{align}
|E_n^*\setminus \{x:|x'|<R,0\leq x_N<T\}|<\epsilon\label{egoal1}
\end{align}
for every $n\geq 1$.

To obtain this, we let $\epsilon>0$ be given and consider the choice of $R$ and $T$ individually.  To choose $R$, note that for every $t\geq 0$ the symmetry of $E_n^*$ gives
\begin{align*}
\pi_1(E_n^*\cap \{x:x_N=t\})\subset \{x':|x'|\leq (v_{E_n^*}(t)/|K_h|)^\frac{1}{N-1}R_h\}.
\end{align*}
where $R_h=\inf\{R:K_h\subset B(0,R)\}$.  On the other hand, arguing as in ($\ref{tracebd}$), we obtain
\begin{align}
v_{E_n^*}(t)&\leq \frac{1}{f(e_N)}\int_{(\partial^*E_n^*)\cap \{x:x_N>t\}} f(\nu_{E_n^*}(x)) d\mathcal{H}^{N-1}(x).\label{eqa}
\end{align}
Note that for $\omega>0$ the right hand side of ($\ref{eqa}$) is bounded by $\frac{1}{f(e_N)}\mathscr{F}(E_n^*)$, while for $\omega<0$ the inequality ($\ref{tracebd}$) implies 
\begin{align*}
&\left(1+\frac{\omega}{f(e_N)}\right)\int_{\partial^*E_n^*} f(\nu_{E_n^*}(x))d\mathcal{H}^{N-1}(x)\\
&\hspace{0.2in}\leq \int_{\partial^*E_n^*} f(\nu_{E_n^*}(x))d\mathcal{H}^{N-1}(x)+\omega \mathcal{H}^{N-1}(\partial^*E\cap \{x:x_N=0\})\leq \mathscr{F}(E_n^*),
\end{align*}
so that the right side of $(\ref{eqa})$ is bounded by $\frac{1}{f(e_N)+\omega}\mathscr{F}(E_n^*)$.  Combining these observations, we obtain 
\begin{align}
\pi_1(E_n^*)=\bigcup_{t\geq 0} \pi_1(E_n^*\cap \{x:x_N=t\})\subset \{x':|x'|\leq R\}\label{rbound}
\end{align}
where we have set
\begin{align*}
R:= \left(\frac{\sup_n\mathscr{F}(E_n^*)}{|K_h|}\max\left\{ \frac{1}{f(e_N)} ,\frac{1}{f(e_N)+\omega}\right\}\right)^\frac{1}{N-1}R_h.
\end{align*}
and noted that ($\ref{entoe}$) implies $\sup_n \mathscr{F}(E_n^*)<\infty$.

To choose $T$, we note that the inclusion ($\ref{rbound}$) followed by an invocation of Tchebyshev's inequality implies that for any $n\in\mathbb{N}$ and $T>0$,
\begin{align}
\nonumber |E_n^*\setminus \{x:|x'|<R,0\leq x_N<T\}|&\leq |E_n^*\setminus \{x:0\leq x_N<T\}|\\
&\leq \frac{1}{T}\int_{E_n^*\cap \{x:x_N\geq T\}} x_Ndx.\label{rhs1}
\end{align}
The bound ($\ref{tracebd}$) then allows us to bound the right hand side of ($\ref{rhs1}$) by
\begin{align*}
\frac{1}{T}\sup_n \mathscr{F}(E_n^*).
\end{align*}
Observing once again that $\sup_n \mathscr{F}(E_n^*)<\infty$ as a consequence of $(\ref{entoe})$, the inequality $(\ref{egoal1})$ follows by choosing $T$ sufficiently large.
\end{proof}

\section{Regularity properties of symmetric minimizers.}

In this section, we study the regularity properties of symmetric minimizers for $\mathscr{F}$, establishing Theorem $\ref{thm_concave}$.  We divide our analysis into three steps.  The first step is to introduce a family of rescaled and truncated copies of the Wulff shape $K$ for the function $f$ appearing in the definition of an admissible surface tension.  This family of sets is adapted to allow for the construction of competitors for minimality candidates of the functional $\mathscr{F}$.  The second step in our analysis is then devoted to this construction: by constructing a suitable competitor, we show that any minimizer for $E$ cannot have local points of concavity; this is the content of Lemma $\ref{lem_competitor}$.  

Finally, the third step is to complete the proof of Theorem $\ref{thm_concave}$ by showing that the function $t\mapsto v_E(t)=|E_t|$ is first continuous (Proposition $\ref{prop_cty}$), and then that the minimizer is in fact convex (i.e. the function $r_E(t)=(v_E(t)/|K_h|)^{1/(N-1)}$ is concave on its support).

We remark that the construction given in this section is inspired by the proof of convexity of minimizers for the isotropic case \cite{GonzalezTamanini}, and can be seen as a localization of that construction (in \cite{GonzalezTamanini}, competitors are constructed by replacing the entire top portion of a candidate set, whereas we replace only a section).  We point out that the argument in \cite{GonzalezTamanini} requires rather strong regularity properties (in particular, analyticity) of minimizers which in that setting follow from the regularity theory for minimal surfaces.  In the present setting this is not available in general, and we work instead with fine properties of sets of finite perimeter.

\subsection{A construction involving the Wulff shape}
\label{sec_construction}

Fix $m>0$ and $E\in\mathcal{F}_m$ with $E=E^*$.  Let $f$ be given admissible with $f(x)=\phi(h(x'),x_N)$, and let $K$ be the Wulff shape associated to $f$.  We begin by defining a family of rescaled and truncated copies of $f$:
\begin{defin}
\label{def_top}
For each $t>0$, $\sigma\in (\inf \pi_N(K),\sup\pi_N(K))$, define
\begin{align*}
b_-(E,\sigma,t):=\bigg(\frac{v^-_E(t)}{v_K(\sigma)}\bigg)^{1/(N-1)},\quad b_+(E,\sigma,t):=\bigg(\frac{v^+_E(t)}{v_K(\sigma)}\bigg)^{1/(N-1)}
\end{align*}
and
\begin{align*}
K_{+}(E,\sigma,t)&=te_N+\bigg(-\big[b_-(E,\sigma,t)\sigma\big]e_N+b_-(E,\sigma,t)(K\cap \{x:x_N>\sigma\})\bigg),\\
K_{-}(E,\sigma,t)&=te_N+\bigg(-\big[b_+(E,\sigma,t)\sigma\big]e_N+b_+(E,\sigma,t)(K\cap \{x:x_N<\sigma\})\bigg).
\end{align*}
\end{defin}
The sets $K_+(E,\sigma,t)$ and $K_-(E,\sigma,t)$ are chosen so that $K_+(E,\sigma,t)$ is a rescaled copy of $K\cap \{x:x_N>\sigma\}$, with the dilation chosen so that the measure of the truncated side is equal to $v_E^{-}(t)$, and translated so that the truncated side lies at height $t$, with $K_-(E,\sigma,t)$ a rescaled copy of $K\cap \{x:x_N<\sigma\}$ satisfying an analogous condition.
Moreover, the admissibility of $f$ (see Definition $\ref{def_adm}$) implies 
\begin{align*}
\bigcup_{\inf \pi_N(K)<\sigma<0} K_+(K,\sigma,0)=\mathbb{R}^{N-1}\times (0,\infty),\quad \bigcap_{0<\sigma<\sup \pi_N(K)} K_+(K,\sigma,0)=\emptyset
\end{align*}
and
\begin{align*}
\bigcup_{0<\sigma<\sup \pi_N(K)} K_-(K,\sigma,0)=\mathbb{R}^{N-1}\times (-\infty,0),\quad \bigcap_{\inf \pi_N(K)<\sigma<0} K_-(K,\sigma,0)=\emptyset.
\end{align*}

\begin{figure}
\centering
\def\svgwidth{0.92\columnwidth}%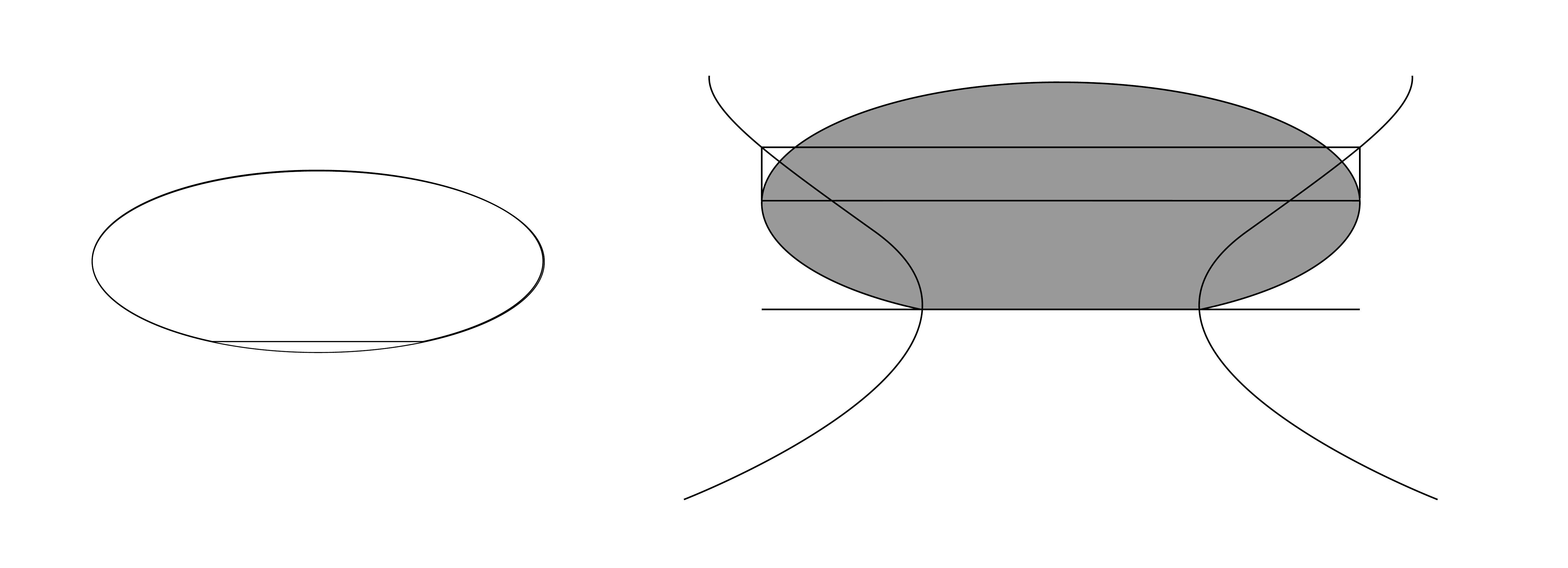
%% Creator: Inkscape inkscape 0.48.0, www.inkscape.org
%% PDF/EPS/PS + LaTeX output extension by Johan Engelen, 2010
%% Accompanies image file 'figure1-revised.pdf' (pdf, eps, ps)
%%
%% To include the image in your LaTeX document, write
%%   \input{<filename>.pdf_tex}
%%  instead of
%%   \includegraphics{<filename>.pdf}
%% To scale the image, write
%%   \def\svgwidth{<desired width>}
%%   \input{<filename>.pdf_tex}
%%  instead of
%%   \includegraphics[width=<desired width>]{<filename>.pdf}
%%
%% Images with a different path to the parent latex file can
%% be accessed with the `import' package (which may need to be
%% installed) using
%%   \usepackage{import}
%% in the preamble, and then including the image with
%%   \import{<path to file>}{<filename>.pdf_tex}
%% Alternatively, one can specify
%%   \graphicspath{{<path to file>/}}
%% 
%% For more information, please see info/svg-inkscape on CTAN:
%%   http://tug.ctan.org/tex-archive/info/svg-inkscape

\begingroup
  \makeatletter
  \providecommand\color[2][]{%
    \errmessage{(Inkscape) Color is used for the text in Inkscape, but the package 'color.sty' is not loaded}
    \renewcommand\color[2][]{}%
  }
  \providecommand\transparent[1]{%
    \errmessage{(Inkscape) Transparency is used (non-zero) for the text in Inkscape, but the package 'transparent.sty' is not loaded}
    \renewcommand\transparent[1]{}%
  }
  \providecommand\rotatebox[2]{#2}
  \ifx\svgwidth\undefined
    \setlength{\unitlength}{1641.88808594pt}
  \else
    \setlength{\unitlength}{\svgwidth}
  \fi
  \global\let\svgwidth\undefined
  \makeatother
  \begin{picture}(1,0.36255552)%
    \put(0,0){\includegraphics[width=\unitlength]{figure1-revised.pdf}}%
    \put(0.43409279,0.15853576){\color[rgb]{0,0,0}\makebox(0,0)[lb]{\smash{$t_1$}}}%
    \put(0.43409279,0.26402483){\color[rgb]{0,0,0}\makebox(0,0)[lb]{\smash{$t_2$}}}%
    \put(0.43409279,0.22988957){\color[rgb]{0,0,0}\makebox(0,0)[lb]{\smash{$\tau_1$}}}%
    \put(0.57327593,0.03670082){\color[rgb]{0,0,0}\makebox(0,0)[lb]{\smash{$E$}}}%
    \put(0.12167175,0.0356384){\color[rgb]{0,0,0}\makebox(0,0)[lb]{\smash{$K$}}}%
    \put(0.05011652,0.13782043){\color[rgb]{0,0,0}\makebox(0,0)[lb]{\smash{$\sigma_1$}}}%
    \put(0.87262818,0.22181087){\color[rgb]{0,0,0}\makebox(0,0)[lb]{\smash{$K_+(E,\sigma_1,\tau_1)$}}}%
  \end{picture}%
\endgroup
\caption{The set $K_+(E,\sigma_1,t_1)$ of Definition $\ref{def_top}$ with the parameters $\sigma_1$ and $\tau_1$ chosen in Lemma $\ref{lem_construction}$.  Given two heights $t_1<t_2$ we find parameters $\sigma_1$ and $\tau_1$ such that $K_+(E,\sigma_1,t_1)$ - which is a rescaled and translated copy of a truncation of $K$ such that $v_{K_+(E,\sigma_1,t_1)}(t_1)=v^-_E(t_1)$ - can be truncated to the interval $\{(x',t):t_1<t<\tau_1\}$ so that $v_{K_+(E,\sigma_1,t_1)}(\tau_1)=v_E^+(t_2)$ and the truncation has equal measure with $E\cap \{x:t_1<x_N<t_2\}$.  The parameter $\sigma_1$ determines the section of the Wulff shape to be rescaled (depicted at left).\label{fig3}}
\end{figure}

The utility of this construction is based in the following lemma, which shows how to choose the parameter $\sigma$ to construct competitor sets for minimality candidates of the functional $\mathscr{F}$ while respecting the volume constraint.  The idea is that given two heights $t_1<t_2$, one can find parameters $\sigma$ and $\tau$ such that the rescaled set $K_+(E,\sigma,t_1)$ - which has $v_{K_+(E,\sigma,t_1)}(t_1)=v^-_E(t_1)$ by construction, and therefore ``agrees'' with $E$ from below - can be truncated to the interval $\{(x',t):t_1<t<\tau\}$ so that the truncation also agrees with $E\cap \{(x',t):t>t_2\}$ from above (in the sense that $v_{K_+(E,\sigma,t_1)}(\tau)=v_E^+(t_2)$) and has equal measure with $E\cap \{x:t_1<x_N<t_2\}$.  

Once this lemma is established, the competing sets will be constructed in the next section by replacing the set $E\cap \{x:t_1<x_N<t_2\}$ in $E$ by $K_+(E,\sigma,t_1)\cap \{x:t_1<x_N<\tau\}$.  A similar construction is given for $K_-$.  The proof of this lemma is based on continuity and monotonicity properties of the Lebesgue measure, first varying the parameter $\sigma$ to determine a minimal value which allows the volume constraint to be satisfied, and subsequently increasing the value of $\sigma$ while adjusting $\tau$ to enforce the volume constraint.

\begin{figure}[h]
\centering
\def\svgwidth{0.92\columnwidth}
%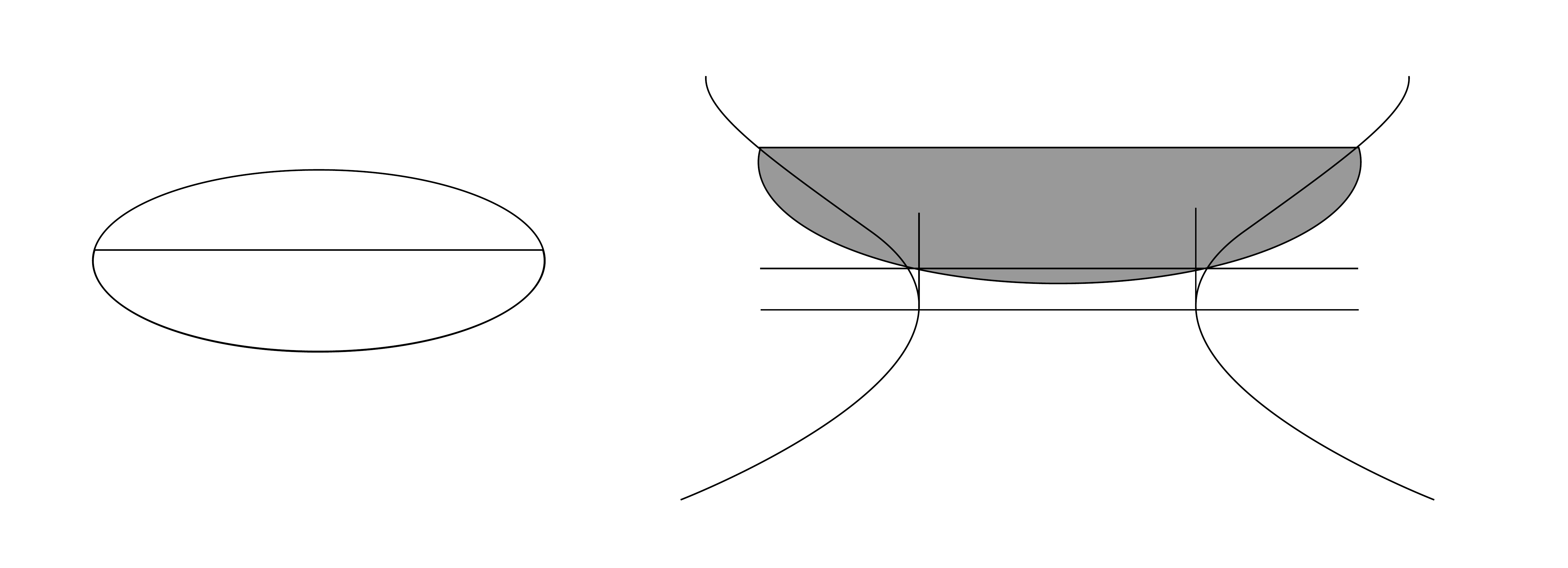
%% Creator: Inkscape inkscape 0.48.0, www.inkscape.org
%% PDF/EPS/PS + LaTeX output extension by Johan Engelen, 2010
%% Accompanies image file 'figure1b-revised.pdf' (pdf, eps, ps)
%%
%% To include the image in your LaTeX document, write
%%   \input{<filename>.pdf_tex}
%%  instead of
%%   \includegraphics{<filename>.pdf}
%% To scale the image, write
%%   \def\svgwidth{<desired width>}
%%   \input{<filename>.pdf_tex}
%%  instead of
%%   \includegraphics[width=<desired width>]{<filename>.pdf}
%%
%% Images with a different path to the parent latex file can
%% be accessed with the `import' package (which may need to be
%% installed) using
%%   \usepackage{import}
%% in the preamble, and then including the image with
%%   \import{<path to file>}{<filename>.pdf_tex}
%% Alternatively, one can specify
%%   \graphicspath{{<path to file>/}}
%% 
%% For more information, please see info/svg-inkscape on CTAN:
%%   http://tug.ctan.org/tex-archive/info/svg-inkscape

\begingroup
  \makeatletter
  \providecommand\color[2][]{%
    \errmessage{(Inkscape) Color is used for the text in Inkscape, but the package 'color.sty' is not loaded}
    \renewcommand\color[2][]{}%
  }
  \providecommand\transparent[1]{%
    \errmessage{(Inkscape) Transparency is used (non-zero) for the text in Inkscape, but the package 'transparent.sty' is not loaded}
    \renewcommand\transparent[1]{}%
  }
  \providecommand\rotatebox[2]{#2}
  \ifx\svgwidth\undefined
    \setlength{\unitlength}{1641.88808594pt}
  \else
    \setlength{\unitlength}{\svgwidth}
  \fi
  \global\let\svgwidth\undefined
  \makeatother
  \begin{picture}(1,0.36255552)%
    \put(0,0){\includegraphics[width=\unitlength]{figure1b-revised.pdf}}%
    \put(0.42310188,0.15944828){\color[rgb]{0,0,0}\makebox(0,0)[lb]{\smash{$t_1$}}}%
    \put(0.42797433,0.26145705){\color[rgb]{0,0,0}\makebox(0,0)[lb]{\smash{$t_2$}}}%
    \put(0.42226661,0.18917753){\color[rgb]{0,0,0}\makebox(0,0)[lb]{\smash{$\tau_2$}}}%
    \put(0.57133323,0.03691747){\color[rgb]{0,0,0}\makebox(0,0)[lb]{\smash{$E$}}}%
    \put(0.12194408,0.03607059){\color[rgb]{0,0,0}\makebox(0,0)[lb]{\smash{$K$}}}%
    \put(0.01392125,0.19698576){\color[rgb]{0,0,0}\makebox(0,0)[lb]{\smash{$\sigma_2$}}}%
    \put(0.87007848,0.22274008){\color[rgb]{0,0,0}\makebox(0,0)[lb]{\smash{$K_-(E,\sigma_2,\tau_2)$}}}%
  \end{picture}%
\endgroup
\caption{The set $K_-(E,\sigma_2,t_2)$ of Definition $\ref{def_top}$ with parameters $\sigma_2$ and $\tau_2$ chosen as in Lemma $\ref{lem_construction}$.  The value of $\sigma_2$ determines the section of the Wulff shape to be rescaled, and is depicted at the left.  The construction is analogous to that depicted in Figure $\ref{fig3}$.\label{fig3b}}
\end{figure}
\begin{lem}
\label{lem_construction}
Fix $t_1,t_2\in \pi_N(E)$ such that $t_1<t_2$.  Then there exist $(\sigma_1,\tau_1)\in \pi_N(K)\times (t_1,\infty)$ and $(\sigma_2,\tau_2)\in \pi_N(K)\times (-\infty,t_2)$ such that 
\begin{enumerate}
\item[(i)] $v_E^+(t_2)=v_{K_{+}(E,\sigma_1,t_1)}^+(\tau_1)$, 
\item[(ii)] $|E\cap \{x:t_1<x_N<t_2\}|=|K_{+}(E,\sigma_1,t_1)\cap \{x:t_1<x_N<\tau_1\}|$.
\end{enumerate}
and
\begin{enumerate}
\item[(iii)] $v_E^-(t_1)=v^-_{K_-(E,\sigma_2,t_2)}(\tau_2)$,
\item[(iv)] $|E\cap \{x:t_1<x_N<t_2\}|=|K_{-}(E,\sigma_2,t_2)\cap \{x:\tau_2<x_N<t_2\}|$.
\end{enumerate}
\end{lem}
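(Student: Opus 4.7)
The strategy is a two-stage continuous dependence argument: I will produce $(\sigma_1, \tau_1)$ via a one-parameter intermediate value argument in $\sigma$, with $\tau$ determined implicitly by condition (i); the pair $(\sigma_2, \tau_2)$ is then obtained by the symmetric construction, replacing $K_+(E,\sigma,t_1)$ by $K_-(E,\sigma,t_2)$ and swapping the roles of $t_1$ and $t_2$. To set things up, I would first observe that since $K$ is open and convex, $v_K$ is continuous on $(\inf \pi_N(K), \sup \pi_N(K))$, hence the scaling factor $b_-(E,\sigma,t_1) = (v_E^-(t_1)/v_K(\sigma))^{1/(N-1)}$ is continuous in $\sigma$. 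Consequently the set $K_+(E,\sigma,t_1)$ varies continuously in $\sigma$ (in the $L^1$ sense), and the quantities
\begin{align*}
g(\sigma,\tau) := v_{K_+(E,\sigma,t_1)}(\tau), \qquad G(\sigma,\tau) := |K_+(E,\sigma,t_1) \cap \{x : t_1 < x_N < \tau\}|
\end{align*}
depend jointly continuously on $(\sigma,\tau)$ wherever $\tau$ lies in the vertical extent of $K_+(E,\sigma,t_1)$.

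Next, for each $\sigma$ the convexity of $K$ together with Brunn--Minkowski shows that $\tau \mapsto g(\sigma,\tau)^{1/(N-1)}$ is concave on its support. By construction $g(\sigma,t_1) = v_E^-(t_1)$, and $g(\sigma,\tau)$ decreases to $0$ at the top of $K_+(E,\sigma,t_1)$. For $\sigma$ sufficiently large (above the height at which $v_K$ is maximized) the slice function is monotone decreasing, while for smaller $\sigma$ it is unimodal. In either case, provided $v_E^+(t_2)$ lies below the maximum of $g(\sigma,\cdot)$, I would define $\tau(\sigma)$ to be the unique $\tau$ on the \emph{descending} branch of $g(\sigma,\cdot)$ with $g(\sigma,\tau) = v_E^+(t_2)$. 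A standard implicit function argument (combined with the fact that $\partial_\tau g(\sigma,\tau(\sigma)) < 0$ on this branch) yields that $\tau(\sigma)$, and hence also $V(\sigma) := G(\sigma,\tau(\sigma))$, is continuous in $\sigma$. Condition (i) is then automatic along the curve $\sigma \mapsto (\sigma,\tau(\sigma))$, and condition (ii) reduces to finding $\sigma_1$ with $V(\sigma_1) = |E \cap \{t_1 < x_N < t_2\}|$.

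To close the argument by the intermediate value theorem, I would analyze the two endpoints. As $\sigma \to \sup \pi_N(K)$, the admissibility hypothesis $\partial_1 \phi(0, \pm 1) = 0$ together with the $C^1$-smoothness of $\phi$ forces $v_K(\sigma) \sim c (\sup \pi_N(K) - \sigma)^{(N-1)/2}$, so that $b_- \sim c'(\sup\pi_N(K)-\sigma)^{-1/2}$ and the height $b_-(\sup \pi_N(K) - \sigma)$ of $K_+(E,\sigma,t_1)$ tends to $0$. A direct computation then gives $|K_+(E,\sigma,t_1)| \to 0$ and hence $V(\sigma) \to 0$. At the opposite end, as $\sigma \to \inf \pi_N(K)$, the factor $b_-$ blows up while $|K \cap \{x_N > \sigma\}| \to |K|$, so $|K_+(E,\sigma,t_1)| \to \infty$, and an elementary estimate using the lower bound $g \geq v_E^+(t_2)$ on $(t_1,\tau(\sigma))$ shows $V(\sigma) \to \infty$. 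By continuity and the intermediate value theorem, there exists $\sigma_1$ with $V(\sigma_1) = |E \cap \{t_1 < x_N < t_2\}|$, and setting $\tau_1 := \tau(\sigma_1)$ yields (i) and (ii).

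The principal obstacle I anticipate is the quantitative boundary analysis as $\sigma \to \sup \pi_N(K)$: one must verify that $K_+(E,\sigma,t_1)$ really collapses to a set of vanishing measure rather than persisting as a non-degenerate shape, which relies essentially on the admissibility condition to control how the Wulff shape flattens near its apex. A related but minor subtlety is the case $v_E^+(t_2) \geq v_E^-(t_1)$, in which no valid $\tau$ exists above the peak of $g(\sigma,\cdot)$; here one restricts to $\sigma$ small enough that the maximum of $g(\sigma,\cdot)$ exceeds $v_E^+(t_2)$, and the construction of $\tau(\sigma)$ on the descending branch proceeds as before.
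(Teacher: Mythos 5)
Your strategy is the same in spirit as the paper's --- an intermediate value argument in the truncation parameter $\sigma$ --- but you enforce the two constraints (i) and (ii) in the opposite order, and this creates a genuine gap. You first pin down $\tau(\sigma)$ by requiring the slice condition (i), i.e. $v_{K_+(E,\sigma,t_1)}(\tau(\sigma))=v_E^+(t_2)$ on the descending branch, and then hope to arrange (ii) by running IVT on $V(\sigma)=|K_+(E,\sigma,t_1)\cap\{t_1<x_N<\tau(\sigma)\}|$. The problem is the case $v_E^+(t_2)>v_E^-(t_1)$, which you flag as a ``minor subtlety'' but which actually breaks the argument. In that case the base slice of $K_+(E,\sigma,t_1)$ has measure $v_E^-(t_1)<v_E^+(t_2)$, so the descending branch only reaches $v_E^+(t_2)$ when the peak of $v_{K_+(E,\sigma,t_1)}$ exceeds $v_E^+(t_2)$; this restricts $\sigma$ to $(\inf\pi_N(K),\sigma^*)$ for a strictly smaller $\sigma^*<\sup\pi_N(K)$. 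At the boundary $\sigma^*$, $\tau(\sigma^*)$ is the peak height and $V(\sigma^*)$ is the full volume of $K_+(E,\sigma^*,t_1)$ below its peak, which is a \emph{fixed positive number} determined only by $v_E^-(t_1)$ and $v_E^+(t_2)$ --- not by $t_2-t_1$. Meanwhile the target $|E\cap\{t_1<x_N<t_2\}|$ can be made arbitrarily small (indeed this lemma is applied in Proposition~\ref{prop_cty} with $t_2-t_1$ shrinking to zero). So one can have $V(\sigma^*)>|E\cap\{t_1<x_N<t_2\}|$, in which case your IVT on $(\inf\pi_N(K),\sigma^*]$ supplies no solution. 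Your limit analysis $V(\sigma)\to 0$ only holds when $\sigma$ is allowed to approach $\sup\pi_N(K)$, which is precisely what is blocked here.

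The paper's proof sidesteps this by reversing the order. It first runs IVT on $f_1(\sigma)=|K_+(E,\sigma,t_1)|$ to locate $\sigma_0$ where the \emph{whole} cap has volume $|E\cap\{t_1<x_N<t_2\}|$; then for $\sigma\leq\sigma_0$ it defines $\tau_1(\sigma)$ by the volume constraint (ii), and runs a second IVT on $f_2(\sigma)=v^+_{K_+(E,\sigma,t_1)}(\tau_1(\sigma))$. The key is that $f_2(\sigma_0)=0$ automatically (since $\tau_1(\sigma_0)$ is the top of the cap), so the range of $f_2$ on $(\inf\pi_N(K),\sigma_0]$ is all of $[0,\infty)$ and the target $v_E^+(t_2)$ is always hit, regardless of the relative sizes of $v_E^-(t_1)$ and $v_E^+(t_2)$. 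This ordering is what makes the IVT unconditional. A secondary, minor point: your asymptotic $v_K(\sigma)\sim c(\sup\pi_N(K)-\sigma)^{(N-1)/2}$ presumes parabolic flattening at the apex, which does not follow from the $C^1$ hypothesis alone (only sublinear growth of $\alpha(t)$ is guaranteed); your conclusion that the rescaled height $b_-\cdot(\sup\pi_N(K)-\sigma)\to 0$ still holds for any sublinear rate, so this is a question of precision rather than a gap, but it would need to be phrased more carefully.
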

\begin{proof}
We define $f_1:\ebint\pi_N(K)\rightarrow [0,\infty)$ by
\begin{align*}
f_1(\sigma)=|K_{+}(E,\sigma,t_1)|,
\end{align*}
and note that $f_1$ is continuous and satisfies the limits $f_1(\sigma)\rightarrow \infty$ as $\sigma\rightarrow\inf \pi_N(K)$, $f_1(\sigma)\rightarrow 0$ as $\sigma\rightarrow\sup\pi_N(K)$.  Indeed, this follows from the convexity of $K$ combined with the remarks above and the continuity properties of the Lebesgue measure.

We may therefore choose $\sigma_0\in (\inf \pi_N(K),\sup\pi_N(K))$ such that $f_1(\sigma_0)=|E\cap \{x:t_1<x_N<t_2\}|$.  Then for every $\sigma\leq \sigma_0$, we may define
\begin{align*}
\tau_1(\sigma)=\sup \bigg\{t\in \pi_N(K_{+}(E,\sigma,t_1))&:|K_+(E,\sigma,t_1)\cap \{x:t_1<x_N<t\}|\\
&\hspace{0.2in}\leq |E\cap \{x:t_1<x_N<t_2\}|\bigg\}.
\end{align*}
Note that with this choice of $\tau_1(\sigma)$, we have $|K_+(E,\sigma,t_1)\cap \{x:t_1<x_N<\tau_1(\sigma)\}|=|E\cap \{x:t_1<x_N<t_2\}|$ for $\sigma\leq\sigma_0$.

We now define $f_2:(\inf \pi_N(K),\sigma_0]\rightarrow [0,\infty)$ by
\begin{align*}
f_2(\sigma)=v_{K_+(E,\sigma,t_1)}^+(\tau_1(\sigma)),
\end{align*}
and observe that $f_2$ is continuous, and satisfies $f_2(\sigma_0)=0$ as well as the limit $f_2(\sigma)\rightarrow \infty$ as $\sigma\rightarrow \inf \pi_N(K)$. We may therefore choose $\sigma\in (\inf \pi_N(K),\sigma_0]$ such that $f_2(\sigma)=v_E^+(t_2)$.  This completes the construction, yielding the desired parameters $\sigma_1$ and $\tau_1=\tau_1(\sigma)$.  The construction of $(\sigma_2,\tau_2)$ is analogous.
\end{proof}

In order to compare values of the functional $\mathscr{F}$ at the candidate set $E$ and the competitor set constructed using Lemma $\ref{lem_construction}$, we will use the following comparison lemma, which shows that the surface energy of the set $K_+$ restricted to the interval $t\in (t_1,\tau_1)$ is smaller than the surface energy of $E$ restricted to the interval $t\in(t_1,t_2)$, along with a similar claim for $K_-$.  The proof is based on attaching sets to the top and bottom of both sections and using the anisotropic isoperimetric inequality (see Figure $\ref{fig6}$).
\begin{lem}
\label{lem_compare_s}
Let $f$ be given admissible with $f(x)=\phi(h(x'),x_N)$.  Fix $\sigma\in \ebint \pi_N(K)$ and $t_1,t_2,\tau\in\mathbb{R}$ with $t_1<t_2$, $v_E^+(t_2)=v_{K_+(E,\sigma,t_1)}^+(\tau)$ and $|E\cap \{x:t_1<x_N<t_2\}|=|K_+(E,\sigma,t_1)\cap \{x:t_1<x_N<\tau\}|$.  Then 
\begin{align*}
\int_{\partial^*K_+(E,\sigma,t_1)\cap \{x:t_1<x_N<\tau\}} &f(\nu_{K_+(E,\sigma,t_1)}(x))d\mathcal{H}^{N-1}(x)\\
&\hspace{0.2in}\leq \int_{\partial^*E\cap \{x:t_1\leq x_N\leq t_2\}} f(\nu_E(x))d\mathcal{H}^{N-1}(x).
\end{align*}
Alternatively, if $v_E^-(t_1)=v_{K_{-}(E,\sigma,t_2)}^{-}(\tau)$ and $|E\cap \{x:t_1<x_N<x_2\}|=|K_-(E,\sigma,t_2)\cap \{x:\tau<x_N<t_2\}|$, then
\begin{align*}
\int_{\partial^*K_-(E,\sigma,t_2)\cap \{x:\tau<x_N<t_2\}} &f(\nu_{K_-(E,\sigma,t_2)}(x))d\mathcal{H}^{N-1}(x)\\
&\hspace{0.2in}\leq \int_{\partial^*E\cap \{x:t_1\leq x_N\leq t_2\}} f(\nu_E(x))d\mathcal{H}^{N-1}(x).
\end{align*}
\end{lem}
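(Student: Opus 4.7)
The plan is a cap-and-compare argument using the anisotropic isoperimetric (Wulff) inequality. Write $b := b_-(E,\sigma,t_1)$ and observe that, by the very definition of $K_+(E,\sigma,t_1)$, the set $W_0 := t_1 e_N + b(K - \sigma e_N)$ is a rescaled translated copy of the Wulff shape $K$, with $K_+(E,\sigma,t_1) = W_0 \cap \{x_N > t_1\}$. Consequently $G := K_+(E,\sigma,t_1) \cap \{t_1 < x_N < \tau\}$ is precisely the horizontal slab of $W_0$ between $\{x_N = t_1\}$ and $\{x_N = \tau\}$; the excised caps $C_1 := W_0 \cap \{x_N \leq t_1\}$ and $C_2 := W_0 \cap \{x_N \geq \tau\}$ recombine with $G$ to recover $W_0$. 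By construction, $C_1$'s upper face has $(N-1)$-measure $v_E^-(t_1)$, and by hypothesis (i), $C_2$'s lower face has measure $v_E^+(t_2)$.

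Next I would graft the same caps onto $F := E \cap \{t_1 < x_N < t_2\}$. Lemma~\ref{lemsymmchar} shows that every non-empty horizontal slice of $W_0$ is a translated rescaling of $K_h$; since $E = E^*$, the same is true for the slices $E_{t_1}$ and $E_{t_2}$, which moreover have the same $(N-1)$-measures as the matching faces of $C_1$ and $C_2$ respectively. Translating $C_1$ horizontally, and translating $C_2$ by $(t_2 - \tau)e_N$ together with an additional horizontal shift (giving $C_2'$), I produce a set $\tilde F := F \cup C_1 \cup C_2'$ in which each pair of matched flat horizontal faces coincides exactly. Hypothesis (ii) then gives $|\tilde F| = |F| + |C_1| + |C_2'| = |G| + |C_1| + |C_2| = |W_0|$.

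Setting $\Phi_A(U) := \int_{\partial^* A \cap U} f(\nu_A)\,d\mathcal{H}^{N-1}$ for convenience, the anisotropic isoperimetric inequality then yields $\Phi_{W_0}(\mathbb{R}^N) \leq \Phi_{\tilde F}(\mathbb{R}^N)$. In each of the two unions $W_0 = G \cup C_1 \cup C_2$ and $\tilde F = F \cup C_1 \cup C_2'$, the matched horizontal interfaces lie in the measure-theoretic interior and therefore contribute nothing to the reduced boundary, so the two sides decompose as
\begin{align*}
\Phi_{W_0}(\mathbb{R}^N) &= \Phi_G(\{t_1 < x_N < \tau\}) + \Phi_{C_1}(\{x_N < t_1\}) + \Phi_{C_2}(\{x_N > \tau\}),\\
\Phi_{\tilde F}(\mathbb{R}^N) &= \Phi_E(\{t_1 < x_N < t_2\}) + \Phi_{C_1}(\{x_N < t_1\}) + \Phi_{C_2'}(\{x_N > t_2\}).
\end{align*}
The $C_1$ terms cancel outright and $\Phi_{C_2'}(\{x_N > t_2\}) = \Phi_{C_2}(\{x_N > \tau\})$ by translation invariance of the anisotropic perimeter, leaving $\Phi_G(\{t_1 < x_N < \tau\}) \leq \Phi_E(\{t_1 < x_N < t_2\}) \leq \Phi_E(\{t_1 \leq x_N \leq t_2\})$, which is the first stated inequality. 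The second inequality, involving $K_-(E,\sigma,t_2)$, is proved by the symmetric construction in which the roles of upper and lower caps are exchanged.

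The main obstacle I anticipate is the rigorous bookkeeping at the horizontal interfaces: the cancellations in the decompositions of $\Phi_{W_0}$ and $\Phi_{\tilde F}$ hinge on exact coincidence of the matched flat faces, which is precisely what the symmetry hypothesis $E = E^*$ together with Lemma~\ref{lemsymmchar} delivers (after appropriate horizontal alignment). Without this exact congruence one would pick up extra surface energy at the interfaces that would spoil the inequality; the admissibility of $f$ enters here through the fact that the flat top and bottom of $K$ guarantee genuine two-sided smooth gluing of the caps.
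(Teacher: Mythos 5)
Your proposal is correct and takes essentially the same approach as the paper: cap the slab of $E$ with translated/scaled pieces of the Wulff shape to build a competitor of the same volume as the rescaled Wulff shape $W_0$ (the paper's $K'$), apply the anisotropic isoperimetric inequality, and cancel the matching cap contributions. One small remark: since the standing assumption in this section is $E = E^*$ and the Wulff shape $K$ for a symmetrizable $f$ has slices $\alpha(t)K_h$ concentric with $K_h$ (Lemma~\ref{lemsymmchar}), no horizontal re-centering of the caps is actually required, so the "additional horizontal shift" you mention is superfluous; also, the matched faces of $C_1$ (resp.\ $C_2$) and the traces of $E$ need only agree up to the mismatch already present in $\partial^* E$ at $x_N = t_1$ (resp.\ $t_2$), so the interface terms do not literally vanish but rather reproduce $\Phi_E(\{x_N = t_1\}) + \Phi_E(\{x_N = t_2\})$, which is precisely why the stated right-hand side is the integral over the \emph{closed} strip $\{t_1\le x_N\le t_2\}$; this is the same bookkeeping the paper performs.
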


\begin{proof}
We first consider the case $v_E^+(t_2)=v_{K_+(E,\sigma,t_1)}^+(\tau)$.  For this case, we define $E'\subset\mathbb{R}^N$ by
\begin{align*}
E'&=\bigg( K'\cap \{x:x_N\leq t_1\} \bigg)\cup \bigg( E\cap \{x:t_1<x_N<t_2\} \bigg) \\
&\hspace{0.2in}\cup \bigg( (t_2-\tau)e_N+K'\cap \{x:x_N\geq \tau\}\bigg),
\end{align*}
where
\begin{align*}
K':=t_1e_N+\big(-\big[b_-(E,\sigma,t_1)\sigma\big]e_N+b_-(E,\sigma,t_1)K\big),
\end{align*}
(so that $K_+(E,\sigma,t_1)=K'\cap \{x:x_N>t_1\}$).  We then have $|E'|=|K'|$, so that the anisotropic isoperimetric (Wulff) inequality yields
\begin{align*}
\int_{\partial^*K'} f(\nu_{K'}(x))d\mathcal{H}^{N-1}(x)\leq \int_{\partial^*E'} f(\nu_{E'}(x))d\mathcal{H}^{N-1}(x),
\end{align*}
where we have observed that $K'$ is obtained from the Wulff shape $K$ by an affine transformation.  The desired inequality then follows by noting that the equalities
\begin{align*}
\int_{(\partial^*K')\cap \{x:x_N\leq t_1\}} f(\nu_{K'}(x))d\mathcal{H}^{N-1}(x)=\int_{(\partial^*E')\cap \{x:x_N<t_1\}} f(\nu_{E'}(x))d\mathcal{H}^{N-1}(x)
\end{align*}
and
\begin{align*}
\int_{(\partial^*K')\cap \{x:x_N\geq \tau\}} f(\nu_{K'}(x))d\mathcal{H}^{N-1}(x)=\int_{(\partial^*E')\cap \{x:x_N>t_2\}} f(\nu_{E'}(x))d\mathcal{H}^{N-1}(x)
\end{align*}
follow from the construction of $K'$ and $E'$.

The case $v_E^-(t_1)=v_{K_-(E,\sigma,t_2)}^-(\tau)$ is similar, considering the set
\begin{align*}
E''&=\bigg( K''\cap \{x:x_N\leq \tau\} \bigg) \cup \bigg( (\tau-t_1)e_N+E\cap \{x:t_1<x_N<t_2\} \bigg) \\
&\hspace{0.2in}\cup \bigg( (\tau-t_1)e_N+K''\cap \{x:x_N\geq t_2\} \bigg),
\end{align*}
where
\begin{align*}
K'':=t_2e_N+\big(-\big[b_+(E,\sigma,t_2)\sigma\big]e_N+b_+(E,\sigma,t_2)K\big)
\end{align*}
in place of the set $E'$ above, and repeating the same invocation of the Wulff inequality.
\end{proof}

\begin{figure}[h]
\centering
\def\svgwidth{0.92\columnwidth}
%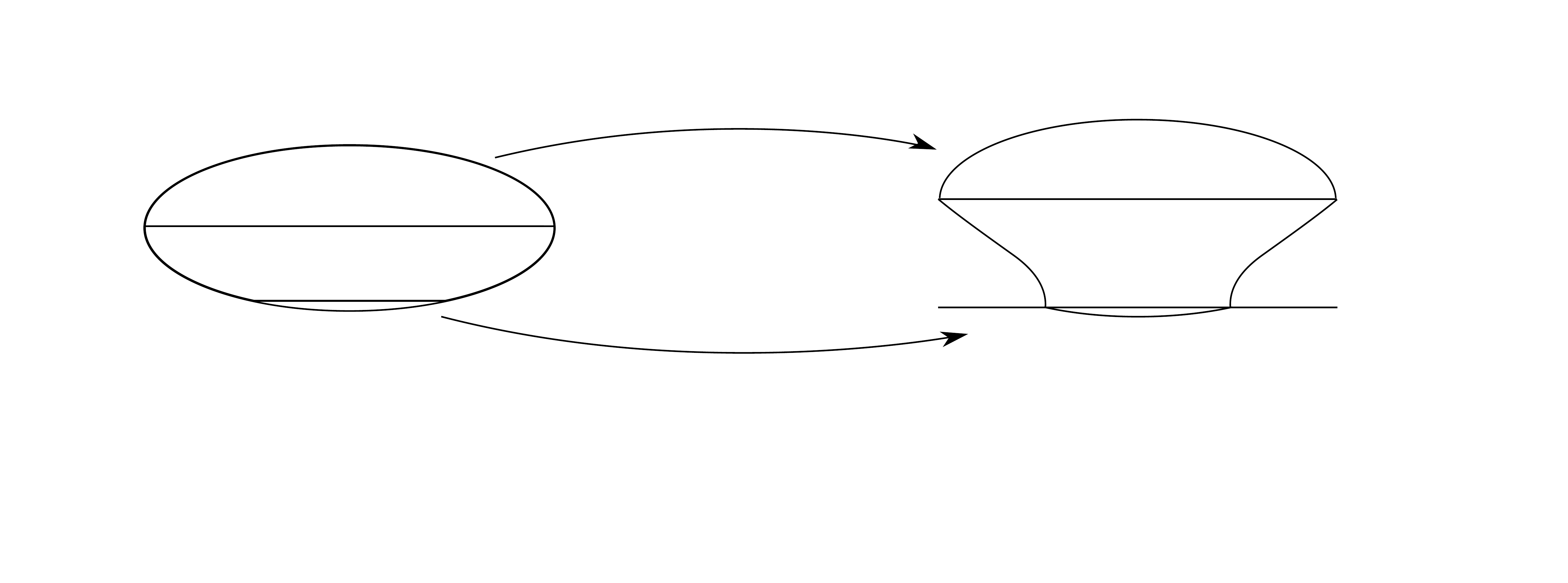
%% Creator: Inkscape inkscape 0.48.0, www.inkscape.org
%% PDF/EPS/PS + LaTeX output extension by Johan Engelen, 2010
%% Accompanies image file 'fig-comparison-1ab.pdf' (pdf, eps, ps)
%%
%% To include the image in your LaTeX document, write
%%   \input{<filename>.pdf_tex}
%%  instead of
%%   \includegraphics{<filename>.pdf}
%% To scale the image, write
%%   \def\svgwidth{<desired width>}
%%   \input{<filename>.pdf_tex}
%%  instead of
%%   \includegraphics[width=<desired width>]{<filename>.pdf}
%%
%% Images with a different path to the parent latex file can
%% be accessed with the `import' package (which may need to be
%% installed) using
%%   \usepackage{import}
%% in the preamble, and then including the image with
%%   \import{<path to file>}{<filename>.pdf_tex}
%% Alternatively, one can specify
%%   \graphicspath{{<path to file>/}}
%% 
%% For more information, please see info/svg-inkscape on CTAN:
%%   http://tug.ctan.org/tex-archive/info/svg-inkscape

\begingroup
  \makeatletter
  \providecommand\color[2][]{%
    \errmessage{(Inkscape) Color is used for the text in Inkscape, but the package 'color.sty' is not loaded}
    \renewcommand\color[2][]{}%
  }
  \providecommand\transparent[1]{%
    \errmessage{(Inkscape) Transparency is used (non-zero) for the text in Inkscape, but the package 'transparent.sty' is not loaded}
    \renewcommand\transparent[1]{}%
  }
  \providecommand\rotatebox[2]{#2}
  \ifx\svgwidth\undefined
    \setlength{\unitlength}{1600pt}
  \else
    \setlength{\unitlength}{\svgwidth}
  \fi
  \global\let\svgwidth\undefined
  \makeatother
  \begin{picture}(1,0.37204724)%
    \put(0,0){\includegraphics[width=\unitlength]{fig-comparison-1ab.pdf}}%
    \put(0.09580281,0.11183368){\color[rgb]{0,0,0}\makebox(0,0)[lb]{\smash{$K_t\cap\{(x',t):t\in(t_1,\tau)\}$}}}%
    \put(0.59316077,0.10964935){\color[rgb]{0,0,0}\makebox(0,0)[lb]{\smash{$E\cap \{(x',t):t\in (t_1,t_2)\}$}}}%
    \put(0.54418005,0.17213367){\color[rgb]{0,0,0}\makebox(0,0)[lb]{\smash{$t_1$}}}%
    \put(0.06256049,0.17919315){\color[rgb]{0,0,0}\makebox(0,0)[lb]{\smash{$t_1$}}}%
    \put(0.06168568,0.23080624){\color[rgb]{0,0,0}\makebox(0,0)[lb]{\smash{$\tau$}}}%
    \put(0.54368365,0.24166397){\color[rgb]{0,0,0}\makebox(0,0)[lb]{\smash{$t_2$}}}%
  \end{picture}%
\endgroup
\caption{The proof of Lemma $\ref{lem_compare_s}$, which states that the surface energy of the set $K_+$ constructed in Lemma $\ref{lem_construction}$ (see Figure $\ref{fig3}$) restricted to the intervals $t\in (t_1,\tau_1)$ is smaller than the surface energy of the original set $E$ restricted to the interval $t\in (t_1,t_2)$.  To prove the lemma, we complete $K_+$ into a rescaling of the Wulff shape $K$ by attaching sets to the top and bottom of $K_+\cap \{(x',t):t\in (t_1,\tau_1)\}$ (depicted on the left) and attach the same sets to $E\cap \{t\in (t_1,t_2)\}$ (depicted on the right).  The construction of $K_+$ and choice of $\tau$ in Lemma $\ref{lem_construction}$ then implies that both sets have equal measures, while the minimality of the Wulff shape ensures the desired decrease in surface energy.  A similar statement holds for $K_-$.\label{fig6}}
\end{figure}

\subsection{Competitor lemma}

The goal of this section is to prove a competitor lemma for minimality candidates for $\mathscr{F}$ to show that minimizers cannot have local points of concavity.  The proof of such a lemma is based in the construction of the previous section, which shows how one can replace a section of the candidate for minimality by a rescaled section of the Wulff shape while preserving the volume constraint.  Lemma $\ref{lem_compare_s}$ of the previous section shows that this procedure reduces the surface energy.  Moreover, in certain cases and when some additional concavity is assumed, the convexity of the Wulff shape implies that the potential energy is reduced as well (this is a consequence of the fact that mass moved downward contributes less to the potential energy; see Figure $\ref{fig-competitor}$).  In particular, we obtain
\begin{lem}
\label{lem_competitor}
Fix $m>0$, $\omega\in (-f(e_N),f(-e_N))$ and let $f$ be given admissible with $f(x)=\phi(h(x'),x_N)$.  Let $E\in \mathcal{F}_m$ be given such that $E=E^*$.  Suppose that there exist $t_2>t_1>0$ such that for a.e. $s\in (0,1)$,
\begin{align}
\label{hyp1}
r_{E}(st_1+(1-s)t_2)<s\cdot r_{E}^-(t_1)+(1-s)r_{E}^+(t_2).
\end{align}
Moreover, if $r_E^-(t_1)>r_E^+(t_2)$, suppose also that 
\begin{align}
\label{hyp2}
t_2-t_1<\frac{|E\cap \{x:x_N\geq t_2\}|}{\lVert v_E(t)\rVert_{L^\infty}}.
\end{align}
Then there exists $E'\in\mathcal{F}_m$ with $\mathscr{F}(E')<\mathscr{F}(E)$.  In particular, $E$ is not a minimizer for $F$.
\end{lem}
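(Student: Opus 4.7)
The plan is a competitor construction that exploits the strict concavity failure of $r_E$ on $(t_1,t_2)$. Since $E=E^*$, each slice $E_t$ is (up to a null set) a rescaled copy of $K_h$. Using Lemma~\ref{lem_construction}, I can replace the slab $E\cap\{t_1<x_N<t_2\}$ either by the top-of-Wulff cap $K_+(E,\sigma_1,t_1)\cap\{t_1<x_N<\tau_1\}$ or by the bottom-of-Wulff cap $K_-(E,\sigma_2,t_2)\cap\{\tau_2<x_N<t_2\}$, preserving the slice measures at the interface heights and preserving the total volume of the replaced slab. The case selection is dictated by the sign of $r_E^-(t_1)-r_E^+(t_2)$: when positive, only the $K_+$ construction is geometrically compatible (and the hypothesis~\eqref{hyp2} will be used); when nonpositive, the $K_-$ construction is used. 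In either case, after splicing in the Wulff cap I rigidly translate $E\cap\{x_N\ge t_2\}$ downward by the appropriate amount, obtaining a competitor $E'\in\mathcal{F}_m$.

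The three pieces of $\mathscr{F}$ are then checked in turn. The contact energy is untouched because $t_1>0$ and the modification occurs strictly above $\{x_N=0\}$. The surface energy satisfies $\mathscr{F}_s(E')\le\mathscr{F}_s(E)$ by Lemma~\ref{lem_compare_s}, and in fact the inequality is strict: \eqref{hyp1} forces the replaced slab of $E$ to fail to be a rescaled Wulff section, so the anisotropic isoperimetric (Wulff) inequality invoked in the proof of Lemma~\ref{lem_compare_s} is strict.

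The decisive geometric input for the potential-energy comparison is the strict inequality $\tau_1<t_2$ in the $K_+$ case (and symmetrically $\tau_2>t_1$ in the $K_-$ case), which ensures that the downward translation of $E\cap\{x_N\ge t_2\}$ is through a positive distance. I would prove $\tau_1<t_2$ by contradiction: if $\tau_1\ge t_2$, then since $r_{K_+(E,\sigma_1,t_1)}$ is concave on $(t_1,\tau_1)$ its graph dominates the chord joining $(t_1,r_E^-(t_1))$ to $(\tau_1,r_E^+(t_2))$, which in turn lies pointwise above the chord of $r_E$ on $(t_1,t_2)$; combining this with the strict inequality \eqref{hyp1} gives $|K_+(E,\sigma_1,t_1)\cap\{t_1<x_N<\tau_1\}|>V:=|E\cap\{t_1<x_N<t_2\}|$, contradicting Lemma~\ref{lem_construction}(ii).

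The main obstacle is the bookkeeping for the full potential-energy difference. Writing $\mathscr{F}_p(F)=\int_0^\infty |F\cap\{x_N\ge t\}|\,dt$ and decomposing the integrand by intervals yields an expression of the form (shift term) minus (first-moment difference between the Wulff cap and the original slab). In the $K_-$ case the profile $v_{K_-}$ is monotone increasing and concentrated toward $t_2$, and a direct comparison against the strictly below-chord profile $v_E$ shows the shift term dominates without further hypotheses. The $K_+$ case is more delicate: the Wulff cap is concentrated near $t_1$ and contributes a small first moment, while the first moment of $v_E$ can be comparatively large. This is where \eqref{hyp2} plays its essential role, providing the bound $V\le(t_2-t_1)\|v_E\|_{L^\infty}<|E\cap\{x_N\ge t_2\}|$, which combined with the pointwise estimate $v_{K_+(E,\sigma_1,t_1)}\le\|v_E\|_{L^\infty}$ and $\tau_1-t_1<t_2-t_1$ allows the shift term $(t_2-\tau_1)|E\cap\{x_N\ge t_2\}|$ to absorb the adverse first-moment contribution, yielding $\mathscr{F}_p(E')<\mathscr{F}_p(E)$. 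Combining the three pieces produces $\mathscr{F}(E')<\mathscr{F}(E)$, contradicting the assumed minimality of $E$.
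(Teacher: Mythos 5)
Your high-level plan---replace a slab by a rescaled Wulff cap via Lemma~\ref{lem_construction}, control the surface energy via Lemma~\ref{lem_compare_s}, translate the upper part of $E$ downward, and split into two cases by the sign of $r_E^-(t_1)-r_E^+(t_2)$---matches the paper. However, you have the two cases assigned to the wrong caps, and that reversal is a genuine error, not a relabeling.

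The paper uses $K_+$ when $r_E^-(t_1)\leq r_E^+(t_2)$ (the easy case, where \eqref{hyp2} is not needed) and $K_-$ when $r_E^-(t_1)>r_E^+(t_2)$ (the delicate case, where \eqref{hyp2} is invoked). You propose the opposite. The assignment matters because the potential-energy bookkeeping in both cases hinges on a pointwise inclusion between the old slab of $E$ and the replacement cap, and that inclusion holds exactly under the paper's assignment. Concretely, for the $K_+$ construction one needs $E\subset K_+(E,\sigma_1,t_1)$ on $\{t_1<x_N<\tau_1\}$: by concavity, the $K_+$ profile lies above the \emph{short} chord joining $(t_1,r_E^-(t_1))$ to $(\tau_1,r_E^+(t_2))$, while \eqref{hyp1} puts the $E$ profile strictly below the \emph{long} chord joining $(t_1,r_E^-(t_1))$ to $(t_2,r_E^+(t_2))$; since $\tau_1<t_2$, the short chord lies above the long chord on $[t_1,\tau_1]$ if and only if $r_E^-(t_1)\leq r_E^+(t_2)$. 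When $r_E^-(t_1)>r_E^+(t_2)$ the short chord lies \emph{below} the long chord, the inclusion fails, and the conclusion that all mass moves downward (the observation that obviates \eqref{hyp2}) is lost. Symmetrically, the inclusion $(E-(\tau_2-t_1)e_N)\cap I\subset E''\cap I$ (which is what the paper's Case 2 argument actually uses) holds on $[\tau_2,t_2]$ precisely when $r_E^-(t_1)>r_E^+(t_2)$ and fails otherwise, by the same chord comparison with the roles of the endpoints reversed.

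Under your reversed assignment, neither of these inclusions is available, and the potential-energy sketch you give does not close: the claim that the $K_-$ case works ``without further hypotheses'' and the absorption estimate for the $K_+$ case both implicitly control the direction of mass transport, which is exactly what the missing inclusion was supposed to provide. Also, the assertion that ``only the $K_+$ construction is geometrically compatible'' when $r_E^-(t_1)>r_E^+(t_2)$ is not correct: the Wulff profile is unimodal, so by choosing $\sigma$ on the appropriate side of $0$ both $K_+(E,\sigma,t_1)$ and $K_-(E,\sigma,t_2)$ are always available, and Lemma~\ref{lem_construction} makes no sign restriction. The selection criterion is the inclusion just described, not geometric feasibility. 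Your $\tau_1<t_2$ argument via the volume bound and the strictness of the surface-energy decrease are fine and agree with the paper; the case assignment is the one thing that needs to be flipped.
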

The proof of Lemma $\ref{lem_competitor}$ splits into two cases.  When $r_E^-(t_1)\leq r_E^+(t_2)$ it is easy to show that the procedure depicted in Figure $\ref{fig-competitor}$ decreases the potential energy (since all motion of mass occurs in the downward direction).  On the other hand when $r_E^-(t_1)>r_E^+(t_2)$, in order to make use of the concavity hypothesis ($\ref{hyp1}$), we work with the sets $K_-(E,\sigma_2,t_2)$ of Lemma $\ref{lem_construction}$; this variant of the procedure is depicted in Figure $\ref{fig-competitor-2}$).  However, with this construction it is possible that some mass is moved upwards.  In order to show that the functional $\mathscr{F}$ decreases, we therefore make use of the hypothesis ($\ref{hyp2}$), which ensures sufficient decrease in potential energy from the translation of the mass above the height $t=t_2$.

\begin{proof}[Proof of Lemma $\ref{lem_competitor}$]  Our argument proceeds by considering the cases $r_E^-(t_1)\leq r_E^+(t_2)$ and $r_E^-(t_1)>r_E^+(t_2)$ individually.

\begin{figure}
\centering
\def\svgwidth{0.62\columnwidth}
%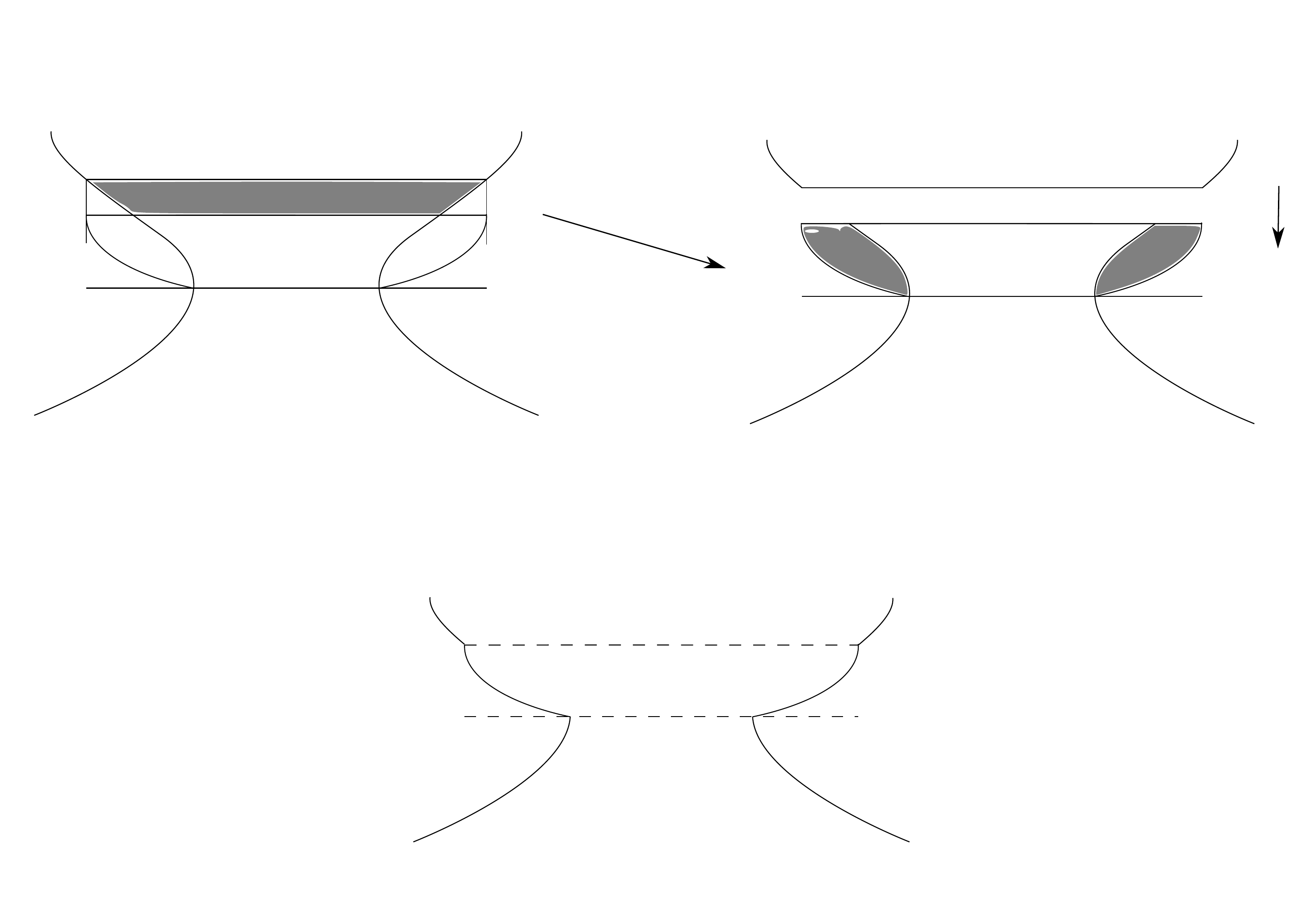
%% Creator: Inkscape inkscape 0.48.0, www.inkscape.org
%% PDF/EPS/PS + LaTeX output extension by Johan Engelen, 2010
%% Accompanies image file 'fig-case1.pdf' (pdf, eps, ps)
%%
%% To include the image in your LaTeX document, write
%%   \input{<filename>.pdf_tex}
%%  instead of
%%   \includegraphics{<filename>.pdf}
%% To scale the image, write
%%   \def\svgwidth{<desired width>}
%%   \input{<filename>.pdf_tex}
%%  instead of
%%   \includegraphics[width=<desired width>]{<filename>.pdf}
%%
%% Images with a different path to the parent latex file can
%% be accessed with the `import' package (which may need to be
%% installed) using
%%   \usepackage{import}
%% in the preamble, and then including the image with
%%   \import{<path to file>}{<filename>.pdf_tex}
%% Alternatively, one can specify
%%   \graphicspath{{<path to file>/}}
%% 
%% For more information, please see info/svg-inkscape on CTAN:
%%   http://tug.ctan.org/tex-archive/info/svg-inkscape

\begingroup
  \makeatletter
  \providecommand\color[2][]{%
    \errmessage{(Inkscape) Color is used for the text in Inkscape, but the package 'color.sty' is not loaded}
    \renewcommand\color[2][]{}%
  }
  \providecommand\transparent[1]{%
    \errmessage{(Inkscape) Transparency is used (non-zero) for the text in Inkscape, but the package 'transparent.sty' is not loaded}
    \renewcommand\transparent[1]{}%
  }
  \providecommand\rotatebox[2]{#2}
  \ifx\svgwidth\undefined
    \setlength{\unitlength}{841.88974609pt}
  \else
    \setlength{\unitlength}{\svgwidth}
  \fi
  \global\let\svgwidth\undefined
  \makeatother
  \begin{picture}(1,0.70707072)%
    \put(0,0){\includegraphics[width=\unitlength]{fig-case1.pdf}}%
    \put(0.01856811,0.47506506){\color[rgb]{0,0,0}\makebox(0,0)[lb]{\smash{$t_1$}}}%
    \put(0.01178066,0.56422758){\color[rgb]{0,0,0}\makebox(0,0)[lb]{\smash{$t_2$}}}%
    \put(0.01178066,0.52896467){\color[rgb]{0,0,0}\makebox(0,0)[lb]{\smash{$\tau_1$}}}%
    \put(0.10338534,0.38503798){\color[rgb]{0,0,0}\makebox(0,0)[lb]{\smash{$E$}}}%
    \put(0.55415825,0.4686618){\color[rgb]{0,0,0}\makebox(0,0)[lb]{\smash{$t_1$}}}%
    \put(0.54872835,0.56868428){\color[rgb]{0,0,0}\makebox(0,0)[lb]{\smash{$t_2$}}}%
    \put(0.54872828,0.52934887){\color[rgb]{0,0,0}\makebox(0,0)[lb]{\smash{$\tau_1$}}}%
    \put(0.29418725,0.15257093){\color[rgb]{0,0,0}\makebox(0,0)[lb]{\smash{$t_1$}}}%
    \put(0.40054051,0.05032687){\color[rgb]{0,0,0}\makebox(0,0)[lb]{\smash{$E'$}}}%
    \put(0.28875728,0.21095475){\color[rgb]{0,0,0}\makebox(0,0)[lb]{\smash{$\tau_1$}}}%
  \end{picture}%
\endgroup
\caption{The construction used to establish Case $1$ in the proof of Lemma $\ref{lem_competitor}$.  The section $E\cap \{(x',t):t_1<t<t_2\}$ of $E$ is replaced with $K_+(E,\sigma_1,t_1)\cap \{(x',t):t_1<t<\tau_1\}$.  The upper portion of the set $E$ is then translated downward to rest on top of the replaced section.  The surface energy decreases as a consequence of Lemma $\ref{lem_compare_s}$, while the potential energy decreases because all motion of mass occurs downward.\label{fig-competitor}}
\end{figure}

\vspace{0.2in}

\underline{Case 1}: $r^-_{E}(t_1)\leq r^+_E(t_2)$.  

\vspace{0.2in}

Let $\sigma=\sigma_1$ and $\tau=\tau_1$ be as constructed in Lemma \ref{lem_construction}.  We first claim that $\tau<t_2$.  Note that ($\ref{hyp1}$) implies 
\begin{align*}
&E\cap \{x:t_1<x_N<t_2\}\\
&\hspace{0.2in}\subset \bigg\{x:t_1<x_N<t_2,\pi_1(x)\in \bigg(\frac{t_2-x_N}{t_2-t_1}r_E^-(t_1)+(1-\frac{t_2-x_N}{t_2-t_1})r_E^+(t_2)\bigg)K_h\bigg\}
\end{align*}
so that we have
\begin{align*}
&|E\cap \{x:t_1<x_N<t_2\}|\\
&\hspace{0.2in}\leq \int_{t_1}^{t_2} \bigg(\frac{t_2-t}{t_2-t_1}r_E^-(t_1)+(1-\frac{t_2-t}{t_2-t_1})r_E^+(t_2)\bigg)^{N-1}|K_h|dt
\end{align*}
On the other hand, the convexity of $K_+(E,\sigma,t_1)$ implies
\begin{align*}
&\bigg\{x:t_1<x_N<\tau,\pi_1(x)\in \bigg(\frac{\tau-x_N}{\tau-t_1}r_E^-(t_1)+(1-\frac{\tau-x_N}{\tau-t_1})r_E^+(t_2)\bigg)K_h\bigg\}\\
&\hspace{0.2in}\subset K_{+}(E,\sigma,t_1)\cap \{x:t_1<x_N<\tau\}
\end{align*}
so that we have
\begin{align*}
&|E\cap \{x:t_1<x_N<t_2\}|=|K_+(E,\sigma,t_1)\cap \{x:t_1<x_N<\tau\}|\\
&\hspace{0.2in}\geq \int_{t_1}^\tau \bigg(\frac{\tau-t}{\tau-t_1}r_E^-(t_1)+(1-\frac{\tau-t}{\tau-t_1})r_E^+(t_2)\bigg)^{N-1}|K_h|dt 
\end{align*}
Combining these bounds and evaluating the integrals we obtain
\begin{align*}
(t_2-t_1)\cdot \frac{r_E^-(t_1)^N-r_E^+(t_2)^N}{N(r_E^-(t_1)-r_E^+(t_2))}\geq (\tau-t_1)\cdot \frac{r_E^-(t_1)^N-r_E^+(t_2)^N}{N(r_E^-(t_1)-r_E^+(t_2))},
\end{align*}
yielding $\tau\leq t_2$.  The strict inequality $\tau<t_2$ then follows by observing that $(\ref{hyp1})$ implies that for a.e. $t\in (t_1,t_2)$, the integrand in
\begin{align*}
\int_{t_1}^{t_2} \bigg(\frac{t_2-t}{t_2-t_1}r_E^-(t_1)+\Big(1-\frac{t_2-t}{t_2-t_1}\Big)r_E^+(t_2)\bigg)^{N-1}|K_h|\bigg)-r_E(t)dt.
\end{align*}
is a non-negative function.  If $\tau=t_2$, the calculations above show that this integral would be zero, so that the integrand would be zero almost everywhere, contradicting the strict inequality ($\ref{hyp1}$).

Returning to the proof of the lemma, we construct $E'\subset\mathbb{R}^N$ as
\begin{align*}
E'&=\bigg(E\cap \{x:x_N\leq t_1\}\bigg)\\
&\hspace{0.2in}\cup \bigg(K_+(E,\sigma,t_1)\cap \{x:t_1<x_N<\tau\}\bigg)\\
&\hspace{0.2in}\cup \bigg(-(t_2-\tau)e_N+(E\cap \{x:x_N\geq t_2\})\bigg).
\end{align*}

Our goal is now to compare the values of the surface and potential energy terms $\mathscr{F}_s$ and $\mathscr{F}_p$ at $E$ and $E'$.  Note that $t_1>0$ implies that the contact energies $\mathscr{F}_c$ are equal: $\mathcal{H}^{N-1}(\partial^*E\cap \{x:x_N=0\})=\mathcal{H}^{N-1}(\partial^*(E')\cap \{x:x_N=0\})$.  

For the surface energy term, we decompose the integrals into the bottom, middle and top sections and apply the first claim of Lemma \ref{lem_compare_s} to obtain
\begin{align*}
\mathscr{F}_s(E)&\geq \mathscr{F}_s(E'),
\end{align*}
where we have observed that the constructions of $K_+(E,\sigma,t_1)$, $\tau$, and $E'$ imply $\mathcal{H}^{N-1}((\partial^*E')\cap \{x:x_N=t_1\})=\mathcal{H}^{N-1}((\partial^*E')\cap \{x:x_N=t_2\})=0$.  

Turning to the potential energy term, we again decompose the integrals to obtain
\begin{align}
\nonumber \mathscr{F}_p(E)&=\int_{E'\cap \{x:x_N\leq t_1\}} x_N dx+\int_{E\cap \{x:t_1<x_N<t_2\}} x_N dx\\
&\hspace{0.2in}+\int_{E'\cap \{x:x_N\geq \tau\}} (x_N+t_2-\tau)dx.\label{eqab}
\end{align}
The choices of $\sigma$ and $\tau$ now imply the equality
\begin{align*}
&|E\cap \{x:\tau<x_N<t_2\}|=|(E'\setminus E)\cap \{x:t_1<x_N<\tau\}|,
\end{align*}
giving
\begin{align*}
&\int_{E\cap \{x:t_1<x_N<t_2\}} x_Ndx\\
&\hspace{0.4in}>\int_{E\cap \{x:t_1<x_N<\tau\}} x_Ndx+\tau|(E'\setminus E)\cap \{x:t_1<x_N<\tau\}|\\
&\hspace{0.4in}\geq \int_{E\cap \{x:t_1<x_N<\tau\}} x_Ndx+\int_{(E'\setminus E)\cap \{x:t_1<x_N<\tau\}} x_Ndx\\
&\hspace{0.4in}=\int_{E'\cap \{x:t_1<x_N<\tau\}} x_Ndx.
\end{align*}

To conclude, we remark that the strict inequality follows from the strict positivity of $|E\cap \{x:\tau\leq x_N<t_2\}|$.  Noting that $t_2-\tau\geq 0$, we find that the right hand side of $(\ref{eqab})$ is strictly greater than $\mathscr{F}_p(E')$.

Assembling these comparison estimates, we have 
\begin{align*}
\mathscr{F}(E)&>\mathscr{F}(E'),
\end{align*}
which resolves the first case.

\begin{figure}
\centering
\def\svgwidth{0.68\columnwidth}
%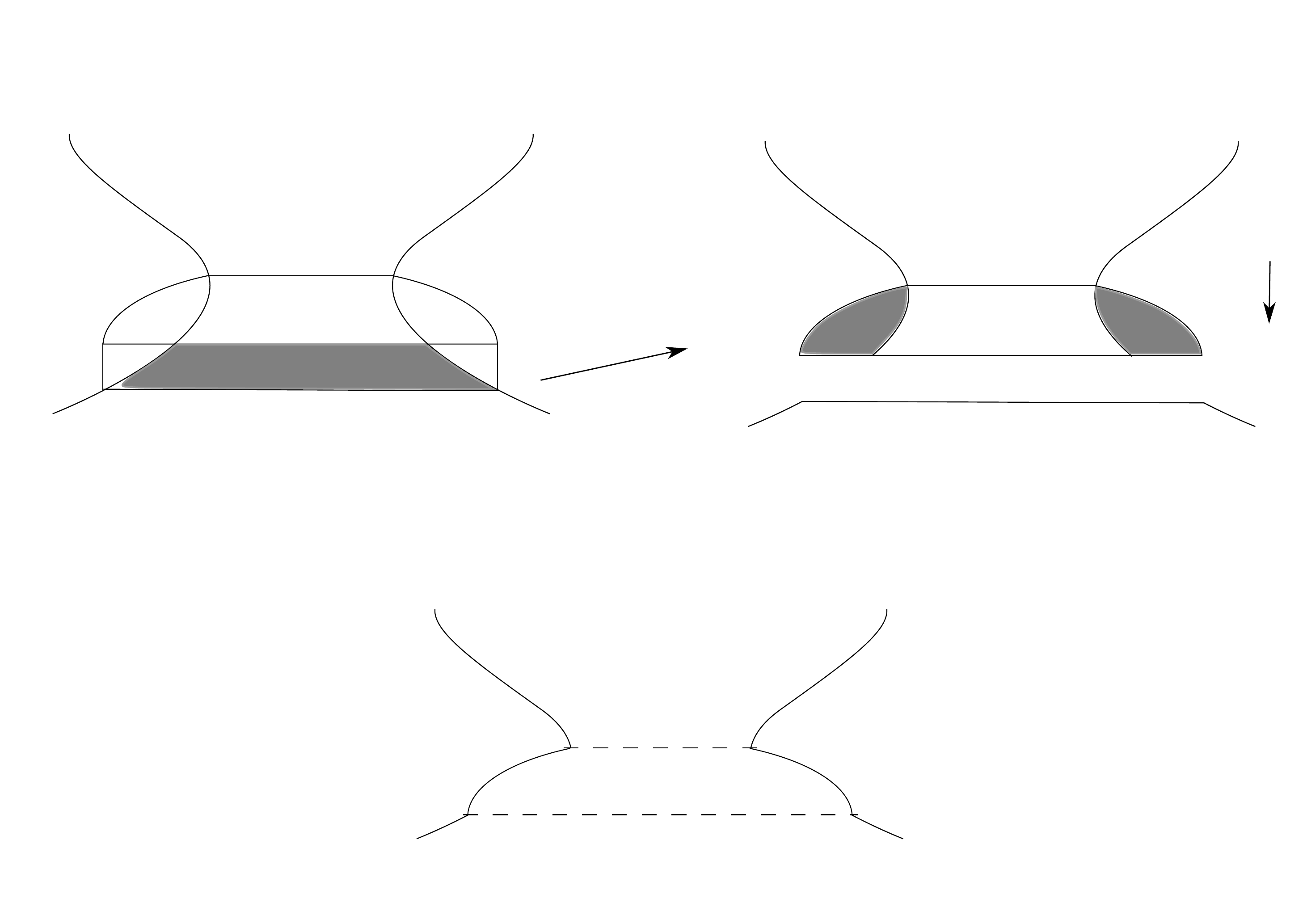
%% Creator: Inkscape inkscape 0.48.0, www.inkscape.org
%% PDF/EPS/PS + LaTeX output extension by Johan Engelen, 2010
%% Accompanies image file 'fig-case2.pdf' (pdf, eps, ps)
%%
%% To include the image in your LaTeX document, write
%%   \input{<filename>.pdf_tex}
%%  instead of
%%   \includegraphics{<filename>.pdf}
%% To scale the image, write
%%   \def\svgwidth{<desired width>}
%%   \input{<filename>.pdf_tex}
%%  instead of
%%   \includegraphics[width=<desired width>]{<filename>.pdf}
%%
%% Images with a different path to the parent latex file can
%% be accessed with the `import' package (which may need to be
%% installed) using
%%   \usepackage{import}
%% in the preamble, and then including the image with
%%   \import{<path to file>}{<filename>.pdf_tex}
%% Alternatively, one can specify
%%   \graphicspath{{<path to file>/}}
%% 
%% For more information, please see info/svg-inkscape on CTAN:
%%   http://tug.ctan.org/tex-archive/info/svg-inkscape

\begingroup
  \makeatletter
  \providecommand\color[2][]{%
    \errmessage{(Inkscape) Color is used for the text in Inkscape, but the package 'color.sty' is not loaded}
    \renewcommand\color[2][]{}%
  }
  \providecommand\transparent[1]{%
    \errmessage{(Inkscape) Transparency is used (non-zero) for the text in Inkscape, but the package 'transparent.sty' is not loaded}
    \renewcommand\transparent[1]{}%
  }
  \providecommand\rotatebox[2]{#2}
  \ifx\svgwidth\undefined
    \setlength{\unitlength}{841.88974609pt}
  \else
    \setlength{\unitlength}{\svgwidth}
  \fi
  \global\let\svgwidth\undefined
  \makeatother
  \begin{picture}(1,0.70707072)%
    \put(0,0){\includegraphics[width=\unitlength]{fig-case2.pdf}}%
    \put(0.10154881,0.36546997){\color[rgb]{0,0,0}\makebox(0,0)[lb]{\smash{$E$}}}%
    \put(0.01316622,0.4064853){\color[rgb]{0,0,0}\makebox(0,0)[lb]{\smash{$t_1$}}}%
    \put(0.01723869,0.48768805){\color[rgb]{0,0,0}\makebox(0,0)[lb]{\smash{$t_2$}}}%
    \put(0.01452371,0.44262147){\color[rgb]{0,0,0}\makebox(0,0)[lb]{\smash{$\tau_2$}}}%
    \put(0.54013883,0.39969181){\color[rgb]{0,0,0}\makebox(0,0)[lb]{\smash{$t_1$}}}%
    \put(0.54401494,0.49143606){\color[rgb]{0,0,0}\makebox(0,0)[lb]{\smash{$t_2$}}}%
    \put(0.54036823,0.43824921){\color[rgb]{0,0,0}\makebox(0,0)[lb]{\smash{$\tau_2$}}}%
    \put(0.38398158,0.04194238){\color[rgb]{0,0,0}\makebox(0,0)[lb]{\smash{$E''$}}}%
    \put(0.09982595,0.08315287){\color[rgb]{0,0,0}\makebox(0,0)[lb]{\smash{$t_1$}}}%
    \put(0.09833272,0.13337993){\color[rgb]{0,0,0}\makebox(0,0)[lb]{\smash{$t_2-(\tau_2-t_1)$}}}%
  \end{picture}%
\endgroup
\caption{The construction used to establish Case $2$ in the proof of Lemma $\ref{lem_competitor}$.  The section $E\cap \{(x',t):t_1<t<t_2\}$ of $E$ is replaced with $K_-(E,\sigma_2,t_2)\cap \{(x',t):\tau_2<t<t_2\}$.  The upper portion of this new set is then translated downward to rest on top of the bottom portion of the original set $E$.  In contrast to Case $1$, not all mass is moved downward; this effect is compensated by the hypothesis $(\ref{hyp2})$, which ensures that the decrease in potential energy from the translation in the last step of the process outweighs any possible increase in potential from mass initially moved upward.\label{fig-competitor-2}}
\end{figure}

\vspace{0.2in}

\underline{Case 2}: $r_E^-(t_1)>r_E^+(t_2)$.

\vspace{0.2in}

The argument for this case resembles that of Case 1, with a slightly altered competitor.  In order to show that the functional $\mathscr{F}$ decreases, we therefore make use of the hypothesis ($\ref{hyp2}$).  We let $\sigma=\sigma_2$ and $\tau=\tau_2$ be as constructed in Lemma \ref{lem_construction}, and begin by noting that $\tau>t_1$.  Indeed, this follows from a similar argument as before: by (\ref{hyp1}) and the convexity of $K_-(E,\sigma,t_2)$, we have
\begin{align*}
(t_2-t_1)\cdot \frac{r_E^-(t_1)^N-r_E^+(t_2)^N}{N(r_E^-(t_1)-r_E^+(t_2))}\geq (t_2-\tau)\cdot \frac{r_E^-(t_1)^N-r_E^+(t_2)^N}{N(r_E^-(t_1)-r_E^+(t_2))},
\end{align*}
which gives $t_1\leq \tau$.  The strict inequality $t_1<\tau$ then follows from the strictness of $(\ref{hyp1})$ as in the previous case.

We now define $E''\subset\mathbb{R}^N$ by
\begin{align*}
E''&=\bigg(E\cap \{x:x_N\leq t_1\}\bigg)\\
&\hspace{0.2in}\cup \bigg( -(\tau-t_1)e_N+K_-(E,\sigma,t_2)\cap \{x:\tau<x_N<t_2\}\bigg)\\
&\hspace{0.2in}\cup \bigg( -(\tau-t_1)e_N+E\cap \{x:x_N\geq t_2\} \bigg).
\end{align*}

As before, we note that $t_1>0$ gives $\mathcal{H}^{N-1}(\partial^*E\cap \{x:x_N=0\})=\mathcal{H}^{N-1}(\partial^*(E'')\cap \{x:x_N=0\})$.  Similarly, the argument given in Case 1 with the first claim of Lemma $\ref{lem_compare_s}$ replaced by the second claim of the same lemma gives $\mathscr{F}_s(E)\geq \mathscr{F}_s(E'')$.  

To conclude the proof, it remains to compare $\mathscr{F}_p(E'')$ with $\mathscr{F}_p(E)$.  For this, we again decompose the integrals:
\begin{align}
\nonumber \mathscr{F}_p(E)& =\int_{E''\cap \{x:x_N\leq t_1\}} x_Ndx+\int_{E\cap \{x:t_1<x_N<t_2\}}x_Ndx\\
\nonumber &\hspace{0.2in}+\int_{E''\cap \{x:x_N\geq t_2-(\tau-t_1)\}} (x_N+\tau-t_1)dx\\
\nonumber &=\int_{E''\cap \{x:x_N\leq t_1\}} x_Ndx+\int_{E\cap \{x:t_1<x_N<t_2\}}x_Ndx\\
&\hspace{0.2in}+\int_{E''\cap \{x:x_N\geq t_2-(\tau-t_1)\}} x_Ndx+(\tau-t_1)|E\cap \{x:x_N\geq t_2\}|\label{eqbca}
\end{align}
where we have noted that $|E''\cap \{x:x_N\geq t_2-(\tau-t_1)\}|=|E\cap \{x:x_N\geq t_2\}|$.  

To estimate the second term, we decompose the integral and use the change of variables $x\mapsto x+(\tau-t_1)e_N$ to write
\begin{align}
\nonumber \int_{E\cap \{x:t_1<x_N<t_2\}}x_Ndx&> \int_{(E-(\tau-t_1)e_N)\cap I} x_Ndx\\
\nonumber &\hspace{0.4in}+(\tau-t_1)|(E-(\tau-t_1)e_N)\cap I\}|\\
&\hspace{0.4in}+t_1|E\cap \{x:t_1<x_N\leq \tau\}|,\label{eqabc}
\end{align}
where $I=\{x:t_1<x_N<t_2-(\tau-t_1)\}$, and where we have used the strict positivity of $|E\cap \{x:t_1<x_N\leq \tau\}|$ to obtain the strict inequality.  To handle the first term on the right-hand side of ($\ref{eqabc}$), note that the construction of $E''$ along with ($\ref{hyp1}$) and the convexity of $K_-(E,\sigma,t_2)$ give
\begin{align}
(E-(\tau-t_1)e_N)\cap I\subset E''\cap I,\label{inclusion}
\end{align}
and thus
\begin{align}
\nonumber &\int_{(E-(\tau-t_1)e_N)\cap I} x_Ndx\\
\nonumber &\hspace{0.4in}=\int_{E''\cap I} x_Ndx-\int_{(E''\setminus (E-(\tau-t_1)e_N))\cap I} x_Ndx\\
&\hspace{0.4in}\geq \int_{E''\cap I}x_Ndx-(t_2-(\tau-t_1))|(E''\setminus (E-(\tau-t_1)e_N))\cap I\}|.\label{eqbc}
\end{align}

Combining ($\ref{eqabc}$)--($\ref{eqbc}$) with ($\ref{inclusion}$) then yields the bound
\begin{align}
\nonumber &\int_{E\cap \{x:t_1<x_N<t_2\}} x_Ndx\\
\nonumber &\hspace{0.4in}> \int_{E''\cap I} x_Ndx-t_2|(E''\setminus (E-(\tau+t_1)e_N))\cap I|\\
&\hspace{0.7in}+(\tau-t_1)|E''\cap I|+t_1|E\cap \{x:t_1<x_N\leq \tau\}|.\label{equality0}
\end{align}

On the other hand, by ($\ref{inclusion}$) and the construction of $E''$ we have
\begin{align*}
|E\cap \{x:t_1<x_N\leq \tau\}|
&=|(E''\setminus (E-(\tau-t_1)e_N))\cap I|,
\end{align*}
which, substituted into ($\ref{equality0}$), gives
\begin{align}
\nonumber \int_{E\cap \{x:t_1<x_N<t_2)\}}&>\int_{E''\cap I}x_Ndx+(\tau-t_1)|E''\cap I|\\
&\hspace{0.2in}-(t_2-t_1)|E\cap \{x:t_1<x_N\leq \tau\}|.\label{eqbcd}
\end{align}

Assembling these estimates, we have 
\begin{align*}
\mathscr{F}_p(E)&>\mathscr{F}_p(E'')+(\tau-t_1)|E''\cap I|-(t_2-t_1)|E\cap \{x:t_1<x_N\leq \tau\}|\\
&\hspace{1.2in} +(\tau-t_1)|E\cap \{x:x_N\geq t_2\}|.
\end{align*}
The positivity of the second term and the hypothesis ($\ref{hyp2}$) along with the bound 
\begin{align*}
|E\cap \{x:t_1<x_N\leq \tau\}|\leq (\tau-t_1)\sup_{t>0} v_E(t)
\end{align*}
then allow us to obtain $\mathscr{F}_p(E)>\mathscr{F}_p(E'')$, which in turn yields $\mathscr{F}(E)>\mathscr{F}(E'')$ as desired.
\end{proof}

\subsection{Regularity results}

We now use Lemma $\ref{lem_competitor}$ to establish regularity properties of minimizers for $\mathscr{F}$.  With the goal of showing the result on convexity of symmetric minimizers, Theorem $\ref{thm_concave}$, we begin by showing that if $E$ is a symmetric minimizer, the function $t\mapsto v_E(t)=|E_t|\in BV(\mathbb{R})$ is continuous (and thus the function $r_E$ is continuous as well).
\begin{proposition}
\label{prop_cty}
Fix $m>0$, $\omega\in (-f(e_N),f(-e_N))$ and let $f$ be given admissible with $f(x)=\phi(h(x'),x_N)$.  Let $E\in \mathcal{F}_m$ be given such that $E=E^*$.  If $E$ is a minimizer for $\mathscr{F}$, then $v_E^+(t)=v_E^-(t)$ for every $t\geq 0$.
\end{proposition}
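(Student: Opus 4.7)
The plan is to argue by contradiction using Lemma $\ref{lem_competitor}$. Suppose $v_E$ has a jump at some $t_0>0$; since $v_E\in BV([0,\infty))$, its jump set is at most countable, so continuity points of $v_E$ are dense. Set $T_{\max}:=\sup\{t:v_E^-(t)>0\}$. The strategy is to pick $t_1<t_2$ with one of them equal to $t_0$ (to capture the jump) and the other a nearby continuity point of $v_E$, so that the hypotheses of Lemma $\ref{lem_competitor}$ are verified; the resulting competitor $E'$ will satisfy $\mathscr{F}(E')<\mathscr{F}(E)$, contradicting minimality.

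For an upward jump ($v_E^-(t_0)<v_E^+(t_0)$), set $t_2=t_0$ and choose a continuity point $t_1\in(t_0-\varepsilon,t_0)$. For $\varepsilon$ small enough that $v_E$ has small total variation on $(t_1,t_0)$ (possible since the only jump of $v_E$ in a neighborhood of $t_0$ is at $t_0$ itself), we have $r_E^-(t_1)=r_E(t_1)$ close to $r_E^-(t_0)$ and strictly less than $r_E^+(t_0)=r_E^+(t_2)$, so Case 1 of Lemma $\ref{lem_competitor}$ applies. Writing the chord as $r_E(t_1)+(1-s)(r_E^+(t_0)-r_E(t_1))$ and using that $r_E$ on $(t_1,t_0)$ stays within $o(1)$ of $r_E(t_1)$ (by smallness of the $BV$ variation), one checks that for every $s\in(0,1)$ the chord strictly exceeds $r_E(st_1+(1-s)t_0)$ by an amount of order $(1-s)(r_E^+(t_0)-r_E(t_1))>0$, verifying (\ref{hyp1}). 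The interior downward jump case ($v_E^-(t_0)>v_E^+(t_0)$ with $t_0<T_{\max}$) is handled symmetrically: set $t_1=t_0$ and take $t_2\in(t_0,T_{\max})$ a continuity point close to $t_0$. Hypothesis (\ref{hyp2}) holds because $|E\cap\{x_N\geq t_2\}|$ remains bounded below as $t_2\downarrow t_0<T_{\max}$, while $t_2-t_1=t_2-t_0$ may be made arbitrarily small. In both situations Lemma $\ref{lem_competitor}$ produces the contradicting competitor.

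The main obstacle is the boundary case of a downward jump at $t_0=T_{\max}$, where $E$ has a flat top and (\ref{hyp2}) necessarily fails since $|E\cap\{x_N\geq T_{\max}\}|=0$; here Lemma $\ref{lem_competitor}$ does not apply. For this case I would argue directly: pick a continuity point $t_1<T_{\max}$ and, using Lemma $\ref{lem_construction}$ with $t_2=T_{\max}$ (so that $v_E^+(t_2)=0$ forces $\tau_1$ to coincide with the apex of the Wulff cap), replace the slab $E\cap\{x_N>t_1\}$ by the equal-volume truncated cap $K_+(E,\sigma_1,t_1)\cap\{t_1<x_N<\tau_1\}$. Lemma $\ref{lem_compare_s}$ combined with the strict convexity of $\phi$ (which prevents the cylindrical slab from being part of any Wulff shape) yields a strict decrease in surface energy. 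The delicate point is the potential energy: the cap may extend above $T_{\max}$, but the admissibility hypothesis $\partial_1\phi(0,\pm 1)=0$ ensures that $K$ is smooth at its apex, so that as $t_1\uparrow T_{\max}$ the cap becomes small and its excess height is of lower order than the fixed surface-energy savings coming from replacing the flat top of area $v_E^-(T_{\max})$ by a curved cap; this gives a strictly better competitor and closes the remaining case.
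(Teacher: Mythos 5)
Your overall strategy — contradiction via Lemma $\ref{lem_competitor}$, splitting into upward jumps (Case 1) and downward jumps (Case 2) — is the same as the paper's, but your verification of hypothesis (\ref{hyp1}) has a genuine gap. You argue that, since $r_E$ stays within $o(1)$ of $r_E(t_1)$ on $(t_1,t_0)$ by smallness of the BV variation, the chord exceeds $r_E$ by an amount of order $(1-s)(r_E^+(t_0)-r_E(t_1))$. Writing out the difference,
\begin{align*}
\big[s\,r_E(t_1)+(1-s)r_E^+(t_0)\big]-r_E\big(st_1+(1-s)t_0\big)
=(1-s)\big(r_E^+(t_0)-r_E(t_1)\big)+\big[r_E(t_1)-r_E\big(st_1+(1-s)t_0\big)\big],
\end{align*}
one sees that near $s=1$ both terms are $o(1)$, and the second term can be negative and of the same order $O(1-s)$ as the first: nothing prevents $r_E$ from increasing linearly just to the right of $t_1$ at a rate larger than the chord's slope, even if $t_1$ is a continuity point. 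So (\ref{hyp1}) is not verified by a uniform-smallness argument. The paper avoids this by not taking an arbitrary nearby continuity point. Instead it fixes the reference line $\ell$ from $(t_0-\delta/2,\,r_E^-(t_0-\delta/2))$ to $(t_0,\,r_E^+(t_0))$ and defines $t'=\sup\{t\in[t_0-\delta/2,t_0]:r_E^-(t)\ge\ell(t)\}$; by left continuity $r_E^-(t')=\ell(t')$, so $\ell$ \emph{is} the chord from $(t',r_E^-(t'))$ to $(t_0,r_E^+(t_0))$, and by construction $r_E^-<\ell$ on $(t',t_0)$, giving (\ref{hyp1}) exactly. That supremum (respectively infimum, in Case 2) construction is the missing ingredient in your argument, and the same fix is what guarantees $t_2-t_1\le\delta_0$ in the downward-jump case so that (\ref{hyp2}) holds.

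Your concern about a downward jump at the top, where $|E\cap\{x_N\ge t_2\}|=0$ defeats (\ref{hyp2}), is a legitimate observation; but your proposed remedy is only a heuristic. You assert that the surface-energy saving from replacing a flat cap by a Wulff dome dominates the potential-energy cost of the dome's overshoot past $T_{\max}$, invoking the admissibility condition $\partial_1\phi(0,\pm1)=0$, but you provide no quantitative comparison, and in particular you would need to control how the dome's volume above $T_{\max}$ is reinserted so that the volume constraint is preserved without destroying the claimed energy gain. As it stands, this part is a plan for a proof rather than a proof.
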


\begin{proof}
Let $t_0\geq 0$ be given.  We begin by showing that $v_E^+(t_0)\leq v_E^-(t_0)$.  Suppose for contradiction that this fails; we would like to apply Lemma $\ref{lem_competitor}$ to contradict the minimality of $E$.  We must therefore obtain $t_1<t_0$ such that ($\ref{hyp1}$) holds for a.e. $s\in (0,1)$.  Note that the left continuity of $v_E^-$ implies we may choose $\delta>0$ such that
\begin{align*}
v_E^-(t)<\frac{v_E^-(t_0)+v_E^+(t_0)}{2}
\end{align*}
for every $t\in (t_0-\delta,t_0)$.  Set $\ell(t)=\frac{2}{\delta}(r_E^+(t_0)-r_E^-(t_0-\frac{\delta}{2}))(t-t_0)+r_E^+(t_0)$ so that
\begin{align*}
\bigg\{(t,\ell(t)):t\in [t_0-\frac{\delta}{2},t_0]\bigg\}
\end{align*} 
is the line connecting $(t_0-\frac{\delta}{2},r_E^-(t_0-\frac{\delta}{2}))$ and $(t_0,r_E^+(t_0))$.  We then set
\begin{align*}
t'=\sup \bigg\{t\in [t_0-\frac{\delta}{2},t_0]:r_E^-(t)\geq \ell(t)\bigg\}.
\end{align*}
The left continuity of $v_E^-$ (and hence $r_E^-$) then implies that $r_E^-(t')=\ell(t')$.  Noting that $\ell(t)>\bigg(\frac{v_E^-(t_0)+v_E^+(t_0)}{2}\bigg)^\frac{1}{N-1}$ for $t$ sufficiently close to $t_0$, we obtain $t'<t_0$.  Moreover, the choice of $t'$ implies that $r_E^-(t)<\ell(t)$ for all $t\in (t',t_0)$.  Thus, the hypothesis ($\ref{hyp1}$) of Lemma $\ref{lem_construction}$ holds with $t_1=t'$ and $t_2=t_0$.  Applying Lemma $\ref{lem_construction}$ then shows that $E$ is not a minimizer for $\mathscr{F}$, a contradiction.  Thus, $v_E^+(t_0)\leq v_E^-(t_0)$.

Suppose now that $v_E^+(t_0)<v_E^-(t_0)$.  We would like to again apply Lemma $\ref{lem_competitor}$, this time obtaining $t_2>t_0$ such that the hypotheses $(\ref{hyp1})$ and $(\ref{hyp2})$ hold.  We proceed as before, adjusting the argument to account for the additional hypothesis $(\ref{hyp2})$.  More precisely, we use the right continuity of $v_E^+$ to choose $\delta>0$ such that
\begin{align*}
v_E^+(t)<\frac{v_E^-(t_0)+v_E^+(t_0)}{2}
\end{align*}
for every $t\in (t_0,t_0+\delta)$.  Set $\delta_0=\min\{\frac{\delta}{2},\frac{|E\cap \{x:x_N\geq t_0+\delta\}|}{\lVert v_E(t)\rVert_{L^\infty}}\}$.  Let $\ell(t)=\frac{1}{\delta_0}(r_E^+(t_0+\delta_0)-r_E^-(t_0))(t-t_0)+r_E^-(t_0)$, so that $\{ (t,\ell(t)):t\in [t_0,t_0+\frac{\delta}{2}]\}$ is the line connecting $(t_0,r_E^-(t_0))$ and $(t_0+\frac{\delta}{2},r_E^+(t_0+\frac{\delta}{2}))$.  We then set
\begin{align*}
t'=\inf \bigg\{t\in [t_0,t_0+\delta_0]:r_E^+(t)\geq \ell(t)\bigg\},
\end{align*}
and obtain $r_E^+(t')=\ell(t')$ and $t'>t_0$ as before.  Moreover, the choice of $t'$ implies $r_E^+(t)<\ell(t)$ for all $t\in (t_0,t')$, while the choice of $\delta_0$ ensures 
\begin{align*}
t'-t_0<\delta_0<\frac{|E\cap \{x:x_N\geq t_0+\delta\}|}{\lVert v_E(t)\rVert_{L^\infty}}\leq \frac{|E\cap \{x:x_N\geq t'\}|}{\lVert v_E(t)\rVert_{L^\infty}}.
\end{align*}
Thus, the hypotheses of Lemma $\ref{lem_competitor}$ hold with $t_1=t_0$ and $t_2=t'$, and we obtain a contradiction with the assumption that $E$ is a minimizer for $\mathscr{F}$.  Thus, $v_E^+(t_0)=v_E^-(t_0)$.
\end{proof}

With this continuity in hand, we now show Theorem $\ref{thm_concave}$, which states that $r_E$ is concave on its support and that minimizers consist of a single connected component (or, equivalently, that $E$ is convex; recall that concavity of $r_E$ corresponds to convexity of $E$).  The concavity result is a consequence of a simple fact concerning one-dimensional continuous functions, which we give in the appendix as Lemma $\ref{lem_concave}$.
\begin{proof}[Proof of Theorem $\ref{thm_concave}$.]
Suppose that $r_E$ is not concave on $\{t:v_E(t)>0\}$.  Then we can find $0<t_1<t_2$ and $s_0\in (0,1)$ such that
\begin{align}
r_E(s_0t_1+(1-s_0)t_2)<s_0r_E(t_1)+(1-s_0)r_E(t_2).\label{eqs0}
\end{align}
Setting $\epsilon=\frac{|E\cap \{x:x_N\geq t_2\}|}{\lVert v_E(t)\rVert_{L^\infty}}$, an application of Lemma $\ref{lem_concave}$ gives $t'_1,t'_2$ for which the hypotheses ($\ref{hyp1})$ and $(\ref{hyp2})$ of Lemma $\ref{lem_competitor}$ are satisfied.  Invoking Lemma $\ref{lem_competitor}$, we obtain a contradiction with the assumption that $E$ is a minimizer for $\mathscr{F}$.

To obtain the existence of $T_{\max}$, note that the positivity of the potential energy term $\mathscr{F}_p$ of $\mathscr{F}$ implies that there exists $T_1>0$ such that $v_E(t)>0$ for every $t\in (0,T_1)$.  It therefore suffices to show that $E$ has exactly one connected component.  To see this, note that if $E$ has more than one connected component, the concavity of $r_E$ on its support implies that there exist $0<T_2<T_3$ such that $r_E=0$ on $(T_2,T_3)$ but $E\cap \{x:x_N>T_3\}\neq\emptyset$.  A simple comparison argument then shows that $E$ cannot be a minimizer (one constructs a competitor which moves $E\cap \{x:x_N>T_3\}$ downward, reducing $\mathscr{F}_p(E)$).
\end{proof}

\section{General minimizers: centering and characterization.}

We now turn our attention to the properties of minimizers of $\mathscr{F}$.  In contrast to the previous section, we make no a priori assumption of symmetry.  Indeed, our first goal is to show that minimizers of $\mathscr{F}$ are symmetric.  In this direction, our approach is inspired by the study of the barycenter introduced in \cite{BCF}, and our proofs are closely related to the techniques developed there.  

\subsection{Symmetry of minimizers: centering.}
The first step towards obtaining the symmetry of minimizers is the following remark, which states that minimizers for $\mathscr{F}$ have essentially no vertical normals at interior heights.  The argument is based on combining the minimality of $E$ with the symmetrization results of Section $4$ and the fact that the result holds for symmetric minimizers as a consequence of the convexity result Theorem $\ref{thm_concave}$.
\begin{rem}
\label{rem61}
Any minimizer $E$ for $\mathscr{F}$ satisfies
\begin{align}
\mathcal{H}^{N-1}(\{x\in \partial^*E:\nu_E(x)=\pm e_N\}\cap J)=0,\label{clmob1}
\end{align}
where $J=\{x:0<x_N<T_{\max}\}$ and $T_{\max}$ is as in Theorem $\ref{thm_concave}$.

To obtain this, let $E$ be a given minimizer.  Theorem $\ref{thm_concave}$ then implies that $(\ref{clmob1})$ holds with $E$ replaced by $E^*$.  An application of Lemmas $\ref{lem_coarea}$ and $\ref{lem_symm}$ therefore yields
\begin{align}
\nonumber &\int_{(\partial^*E^*)\cap J} f(\nu_{E^*}(x))d\mathcal{H}^{N-1}(x)\\
\nonumber &\hspace{0.2in}\leq \int_{(\partial^*E^*)\cap \{x:\nu_{E^*}(x)\neq \pm e_N\}\cap J} f(\nu_{E^*}(x))d\mathcal{H}^{N-1}(x)\\
&\hspace{0.2in} \leq \int_{(\partial^*E)\cap \{x:\nu_E(x)\neq \pm e_N\}\cap J} f(\nu_E(x))d\mathcal{H}^{N-1}(x).\label{symm_eq}
\end{align}
Moreover, by the definition of the symmetrization map $E\mapsto E^*$ we have $\mathscr{F}_p(E^*)=\mathscr{F}_p(E)$, as well as
\begin{align*}
\mathcal{H}^{N-1}(\partial^*E^*\cap \{x:x_N=0\})&=\mathcal{H}^{N-1}(\partial^*E\cap \{x:x_N=0\}),\\
\mathcal{H}^{N-1}(\partial^*E^*\cap \{x:x_N=T_{\max}\})&=\mathcal{H}^{N-1}(\partial^*E\cap \{x:x_N=T_{\max}\}).
\end{align*}
Combining these equalities with the minimality of $E$ and Theorem $\ref{thm_symm_f}$, we obtain
\begin{align*}
\mathscr{F}(E)&=\mathscr{F}(E^*)=\omega \mathcal{H}^{N-1}(\partial^*E^*\cap \{x:x_N=0\})+\int_{(\partial^*E^*)\cap J} f(\nu_E(x))d\mathcal{H}^{N-1}(x)\\
&\hspace{0.2in}+f(e_N)\mathcal{H}^{N-1}(\partial^*E^*\cap \{x:x_N=T_{\max}\})+\mathscr{F}_p(E^*)\\
&\leq \omega\mathcal{H}^{N-1}(\partial^*E^*\cap \{x:x_N=0\})+\int_{(\partial^*E)\cap \{x:x_N\neq \pm e_N\}\cap J} f(\nu_E(x))d\mathcal{H}^{N-1}(x)\\
&\hspace{0.2in}+f(e_N)\mathcal{H}^{N-1}(\partial^*E\cap \{x:x_N=T_{\max}\})+\mathscr{F}_p(E)\\
&\leq \mathscr{F}(E).
\end{align*}
This in turn implies
\begin{align*}
\int_{(\partial^*E)\cap \{x:\nu_E(x)=\pm e_N\}\cap J}f(\nu_E(x))d\mathcal{H}^{N-1}(x)&=0,
\end{align*}
which gives ($\ref{clmob1}$).
\end{rem}

\vspace{0.2in}

For any minimizer $E$, we now define the barycenter function $\beta:\mathbb{R}\rightarrow\mathbb{R}^{N-1}$ for the slices $E_t$ by
\begin{align*}
\beta(t)&:=\frac{1}{v_E(t)}\int_{E_t} x'dx'-\frac{1}{v_E(t)}\int_{r_E(t)K_h} x'dx'.
\end{align*}
Observe that $\beta$ is well defined as a consequence of ($\ref{rbound}$) in the proof of Theorem $\ref{thm_exist}$.

As a first step in our analysis of $\beta$, we will establish the symmetry of individual slices of minimizers, up to translation.  For this purpose, we will need a density estimate for almost-minimizers of the surface energy 
\begin{align*}
F(E):=\int_{\partial^*E} f(\nu_E(x))d\mathcal{H}^{N-1}(x).
\end{align*}
For details, see ($3.3$) in \cite{FM}.
\begin{lem}[Density estimate for almost minimizers, \cite{FM}]
\label{lem_density}
Let $E\in \mathcal{F}$ be given, and fix $\epsilon,R>0$, $t\in \mathbb{R}\cap \{-\infty\}$.  Suppose that $E$ is an {\it $(\epsilon,R,t)$-minimizer} for $F$ in the sense that for every $E'\in\mathcal{F}$ satisfying $|E'|=|E|$, $E\Delta E'\subset \{x:x_N>t\}$, and $E'\subset \{x:\inf_{y\in E} f_*(x-y)<R\}$, one has
\begin{align*}
F(E)&\leq F(E')+\epsilon|K|^{1/N}|E|^{-1/N}|E\Delta E'|
\end{align*}
where
\begin{align*}
f_*(x)=\sup\{x\cdot y:f(y)=1\}.
\end{align*}

Then there exists $C>0$ and $r_0>0$ such that for every $x\in \partial E\cap \{x:x_N>t\}$ and every $0<r<r_0$, one has
\begin{align*}
|B(x,r)\cap E|\geq Cr^N.
\end{align*}
\end{lem}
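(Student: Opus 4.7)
The plan is to follow the standard ODE-and-integration strategy for density estimates of almost-minimizers of anisotropic perimeter, as adapted to the volume-constrained setting in \cite{FM}. Fix $x_0 \in \partial E \cap \{x_N > t\}$ and set $m(r) := |E \cap B(x_0, r)|$ for $r > 0$; by the coarea formula, $m$ is absolutely continuous with $m'(r) = \mathcal{H}^{N-1}(E^{(1)} \cap \partial B(x_0, r))$ for a.e. $r$. The strategy is to establish a differential inequality of the form $m(r)^{(N-1)/N} \leq C\, m'(r)$ for a.e. $r \in (0, r_0)$, with appropriate constants $C, r_0 > 0$. Once this is in hand, observing that $x_0 \in \partial E$ forces $m(r) > 0$ for every $r > 0$, integration from $0$ yields $m(r)^{1/N} \geq r/(NC)$, hence $m(r) \geq (NC)^{-N} r^N$, which is the desired lower bound.

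To derive the ODE, I would compare $E$ to a volume-preserving competitor $E' = (E \setminus B(x_0, r)) \cup D_r$, where $D_r$ is an admissible set of volume $m(r)$, disjoint from $E \setminus B(x_0, r)$ and contained in $\{x : \inf_{y \in E} f_*(x-y) < R\} \cap \{x_N > t\}$. A direct estimate using the spherical cap $\partial B(x_0, r) \cap E^{(1)}$ as the new portion of reduced boundary gives
$$F(E') \leq F(E; \mathbb{R}^N \setminus \overline{B(x_0, r)}) + M_f\, m'(r) + F(D_r),$$
where $M_f := \max\{f(\nu):|\nu|=1\}$. Combining this with the almost-minimality inequality (using $|E \Delta E'| \leq 2m(r)$) and isolating the surface energy of $E$ inside $B(x_0, r)$ yields
$$F(E; B(x_0, r)) \leq M_f\, m'(r) + F(D_r) + 2\epsilon |K|^{1/N}|E|^{-1/N} m(r),$$
while the anisotropic (Wulff) isoperimetric inequality applied to $E \cap B(x_0, r)$ supplies the matching lower bound
$$F(E; B(x_0, r)) \geq P_f(E \cap B(x_0, r)) - M_f\, m'(r) \geq N|K|^{1/N}\, m(r)^{(N-1)/N} - M_f\, m'(r).$$

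The main obstacle is the choice of $D_r$: the naive option of a translated rescaled Wulff shape $y_0 + \rho K$ with $\rho^N |K| = m(r)$ has $F(D_r) = N|K|^{1/N}\, m(r)^{(N-1)/N}$, exactly matching the leading term of the Wulff lower bound, so the two inequalities cancel at leading order and yield nothing. This is the subtle step where the construction of \cite{FM} is essential: one instead takes $D_r$ to be a thin layer attached to $\partial E$ (well away from $x_0$ and above height $t$), distributing the added mass over a region of much larger surface area so that $F(D_r) = O(m(r))$ as $m(r) \to 0$, rather than $O(m(r)^{(N-1)/N})$. Substituting into the two bounds gives
$$N|K|^{1/N}\, m(r)^{(N-1)/N} \leq 2 M_f\, m'(r) + C_1\, m(r),$$
and since $m(r)^{(N-1)/N} \gg m(r)$ as $m(r) \to 0$, for $r$ sufficiently small the $m(r)$ term is absorbed into the left-hand side, producing the desired ODE. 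The constants $C$ and $r_0$ depend only on $f$, $R$, $\epsilon$, $|E|$, and the distance from $x_0$ to the hyperplane $\{x_N = t\}$, so the estimate is uniform in $x_0$ over the stated range, and integration completes the proof.
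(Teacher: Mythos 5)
The paper does not prove this lemma; it simply cites Figalli--Maggi \cite{FM} and remarks that the same argument adapts to $(\epsilon,R,t)$-minimizers, so there is no internal proof to compare against. Your overall strategy --- derive the differential inequality $m(r)^{(N-1)/N}\lesssim m'(r)$ from a local excision combined with the Wulff inequality and the almost-minimality defect, then integrate --- is the correct one, and your observation that a disjoint rescaled Wulff shape produces exact cancellation at leading order is a genuine and good insight.

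However, there is a real gap in the proposed fix. You claim that by taking $D_r$ to be a thin layer attached to $\partial E$, one achieves $F(D_r)=O(m(r))$. This is impossible: for \emph{any} set $D_r$ with $|D_r|=m(r)$, the Wulff inequality forces
\[
F(D_r)\;\geq\; N|K|^{1/N} m(r)^{(N-1)/N},
\]
which dominates $m(r)$ as $m(r)\to 0$. A thin slab of base area $A$ and thickness $\delta$ with $A\delta=m(r)$ has $F(D_r)\approx c_f A=O(1)$, not $O(m(r))$. The quantity that is actually $O(m(r))$ is the \emph{net change} in surface energy caused by attaching $D_r$: the piece $\Sigma=\partial^*E\cap\partial^*D_r$ that gets covered disappears from $\partial^*E'$, and what remains is $\partial^*D_r\setminus\Sigma$, whose Wulff energy nearly cancels the removed contribution $\int_\Sigma f(\nu_E)$. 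Consequently your first inequality $F(E')\leq F(E;\mathbb{R}^N\setminus\overline{B(x_0,r)})+M_f\,m'(r)+F(D_r)$, while true, is too lossy: it discards exactly the negative term that makes the argument close. The standard way to make this rigorous (and what is implicitly used in \cite{FM}) is a volume-fixing variation: there exist $\Lambda>0$ and $v_0>0$, depending only on $E$ and $f$, together with a one-parameter family $(E_v)_{|v|<v_0}$ supported in a fixed ball disjoint from $B(x_0,r)$ and contained in $\{x_N>t\}$, with $|E_v|=|E|+v$ and $|F(E_v)-F(E)|\leq\Lambda|v|$; one then sets $E'=(E_v\setminus B(x_0,r))$ with $v=m(r)$. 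This gives the $O(m(r))$ correction term directly without ever estimating $F(D_r)$.

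A secondary point: you state the constants depend on the distance from $x_0$ to $\{x_N=t\}$; as written this would mean they degenerate as $x_0\to\{x_N=t\}$, contradicting the uniform claim in the lemma. In fact the volume-fixing construction is global and independent of $x_0$; the genuine constraint is only that $B(x_0,r)\cap E\subset\{x_N>t\}$ so the excision is admissible, which (as in the paper's actual use, with $t$ chosen strictly below the heights of interest) is ensured simply by taking $r_0$ small relative to the margin.
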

We remark that although the proof of ($3.3$) in \cite{FM} concerns $(\epsilon,R)$-minimizers ($(\epsilon,R,-\infty)$-minimizers in our terminology), the same argument gives our statement.  With this tool in hand, we obtain
\begin{lem}[Symmetry of slices of minimizers up to translation]
\label{lem_symm1}
Fix $m>0$, $\omega\in (-f(e_N),f(-e_N))$ and let $f$ be given admissible with $f(x)=\phi(h(x'),x_N)$.  Let $E$ be a minimizer for $\mathscr{F}$.  Then for almost every $t>0$ we have
\begin{align}
E_t&=r_E(t)K_h+\beta(t).\label{clmob2}
\end{align}
Moreover, $\beta$ is locally bounded.
\end{lem}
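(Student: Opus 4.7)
My plan is to extract the equality case of Theorem~\ref{thm_symm_f} from the minimality of $E$, use the rigidity clause of Lemma~\ref{lem_symm} to identify each slice, and then invoke the density estimate of Lemma~\ref{lem_density} to obtain the local boundedness of $\beta$.

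First I would observe that $\mathscr{F}(E) = \mathscr{F}(E^*)$: since $E^* \in \mathcal{F}_m$ and $\mathscr{F}(E^*) \leq \mathscr{F}(E)$ by Theorem~\ref{thm_symm_f}, this follows from minimality. The potential term is preserved by symmetrization via Fubini (using $|E_t| = |E_t^*|$ a.e.), and the contact term equals $\omega v_E^+(0)$ for both sets, so $\mathscr{F}_s(E) = \mathscr{F}_s(E^*)$. I would then rewrite both surface energies via the coarea formula (Lemma~\ref{lem_coarea}), using Remark~\ref{rem61} to discard the null contribution from $\{\nu_E = \pm e_N\}$ inside $J$, and the analogous property for $E^*$ that follows from Theorem~\ref{thm_concave}. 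This yields
\[\int_G \int_{(\partial^* E^*)_t} \frac{f(\nu_{E^*})}{|\pi_1(\nu_{E^*})|} \, d\mathcal{H}^{N-2} \, dt = \int_G \int_{(\partial^* E)_t} \frac{f(\nu_E)}{|\pi_1(\nu_E)|} \, d\mathcal{H}^{N-2} \, dt\]
with $G = G(E) \cap G(E^*)$. Combined with the pointwise inequality of Lemma~\ref{lem_symm}, this forces equality for a.e.\ $t \in G$, and the rigidity part of Lemma~\ref{lem_symm} gives that $E_t$ is a translate and dilate of $K_h$. Matching volumes via $|E_t| = v_E(t) = |r_E(t) K_h|$ fixes the dilation at $r_E(t)$ and yields \eqref{clmob2}.

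For the local boundedness of $\beta$, I would verify that $E$ is an almost-minimizer for $F$ in the sense of Lemma~\ref{lem_density}. Given any bounded $E'$ with $|E'| = |E|$ and $E \Delta E' \subset \{0 < x_N < R\}$, the minimality of $E$ for $\mathscr{F}$ combined with $\mathscr{F}_c(E) = \mathscr{F}_c(E')$ (since $E \Delta E'$ lies above $\{x_N = 0\}$) and $|\mathscr{F}_p(E) - \mathscr{F}_p(E')| \leq R|E \Delta E'|$ yields $F(E) \leq F(E') + R |E \Delta E'|$, which places $E$ in the class of $(\epsilon, R', 0)$-minimizers for appropriate parameters. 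Lemma~\ref{lem_density} then provides $|B(x, r_0) \cap E| \geq C r_0^N$ for every $x \in \partial E$ with $x_N > 0$ and all sufficiently small $r_0$. Recalling from the slice equality that $E \subset \{0 < x_N \leq T_{\max}\}$, a standard packing argument against the fixed volume $|E| = m$ shows that $\partial E \cap \{x_N \geq \delta\}$ is bounded for each $\delta > 0$, whence $\beta$ is locally bounded on $\{t : v_E(t) > 0\}$.

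The main obstacle will be the careful propagation of the identity $\mathscr{F}_s(E) = \mathscr{F}_s(E^*)$ to pointwise a.e.\ equality in Lemma~\ref{lem_symm}: handling the vertical part of $\partial^* E$ where $\pi_1(\nu_E) = 0$ is delicate, and this is where Remark~\ref{rem61} together with the convexity of $E^*$ from Theorem~\ref{thm_concave} is essential. A secondary subtlety is ensuring that the almost-minimizer property for $F$ is stated in a form strictly compatible with Lemma~\ref{lem_density}, which requires tracking the contact and potential contributions against admissible perturbations that respect both the half-space confinement and the volume constraint.
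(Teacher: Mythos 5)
Your overall strategy matches the paper's: extract $\mathscr{F}_s(E)=\mathscr{F}_s(E^*)$ from minimality and Theorem \ref{thm_symm_f}, pass to the slice-wise equality via the coarea formula and Remark \ref{rem61}, invoke the rigidity clause of Lemma \ref{lem_symm} to identify each $E_t$ as a translated dilated $K_h$, and then use the almost-minimizer density estimate of Lemma \ref{lem_density} together with a packing argument to get local boundedness of $\beta$. The first part of your proposal is entirely sound.

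However, the verification of the almost-minimality property has a genuine gap. You restrict to competitors with $E\Delta E'\subset\{0<x_N<R\}$ and assert $\mathscr{F}_c(E)=\mathscr{F}_c(E')$ ``since $E\Delta E'$ lies above $\{x_N=0\}$.'' This justification is vacuous: every pair of sets in $\mathcal{F}_m$ has symmetric difference lying above $\{x_N=0\}$, yet their contact sets $\partial^*E\cap\{x_N=0\}$ and $\partial^*E'\cap\{x_N=0\}$ can of course differ, since these are determined by traces, i.e.\ by the behavior of the sets in an $\mathcal{L}^N$-small strip near the plane. With $E\Delta E'\subset\{0<x_N<R\}$ nothing prevents $E'$ from being, say, $E$ shifted upward by a tiny amount, which wipes out the contact set entirely. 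And because $\omega<f(-e_N)$, an uncontrolled change in contact area enters the comparison $F(E)\leq F(E')+\cdots$ with the wrong sign. What you actually need is the condition used in the paper and in the statement of Lemma \ref{lem_density}: $E\Delta E'\subset\{x_N>t\}$ for some fixed $t>0$, which forces $E$ and $E'$ to coincide in the strip $\{0<x_N<t\}$ and hence have identical traces on $\{x_N=0\}$. One then establishes $(\epsilon,R,t)$-minimality for each $t>0$, with the constant $\epsilon$ controlled via the a priori vertical bound $\pi_N(E)\subset[0,T]$ (not via the thickness $R$ of the strip, as in your potential-energy estimate, which is also misstated for the same reason). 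Since you only need boundedness of $\beta$ on compact subintervals $[T_1,T_2]\subset(0,T_{\max})$, applying the density estimate with $t<T_1$ suffices. With this correction, the argument goes through exactly as you sketch.
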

\begin{proof}
Let $E$ be a given minimizer of $\mathscr{F}$.  Observe that by the argument given in Remark $\ref{rem61}$, we have $\mathscr{F}_s(E)=\mathscr{F}_s(E^*)$.  The claim ($\ref{clmob2}$) then follows immediately from Remark $\ref{rem61}$, Lemma $\ref{lem_coarea}$ and Lemma $\ref{lem_symm}$.

It remains to show that $\beta$ is locally bounded.  Let $I\subset [0,\infty)$ be a given compact interval.  We claim that there exists $r=r(E,I)>0$ such that
\begin{align*}
E\cap \{x:x_N\in I\}\subset \{x:x_N\in I,|\pi_1(x)|<r\}.
\end{align*}

We proceed in two steps.

\vspace{0.2in}

\underline{Step $1$}: There exist $\epsilon>0$ and $R>0$ such that $E$ is an $(\epsilon,R,t)$-minimizer for every $t>0$.

\vspace{0.2in}

The argument we give is inspired by and closely related to the proof of Corollary $4.4$ in \cite{FM}.  Recall that by ($\ref{rbound}$) in the proof of Theorem $\ref{thm_exist}$, we may find $R>0$ such that
\begin{align*}
\pi_1(E^*)\subset \{x:|x'|\leq R\}.
\end{align*}
The convexity of $E^*$ and the volume constraint $|E|=|E^*|=m$ then imply that there exists $T>0$ such that
\begin{align*}
\pi_N(E)=\pi_N(E^*)\subset [0,T].
\end{align*}
Indeed, since there exists $\epsilon>0$ such that $B(0,\epsilon)\subset\mathbb{R}^{N-1}$ is contained in $\partial^*E^*\cap \{x_N=0\}=v_{E}^+(0)K_h$, supposing $v_E(\tau)>0$ for some $\tau>0$, the convexity of $E^*$ implies that $E^*$ contains the cone $P(\epsilon,\tau)=\cup_{0\leq s\leq \tau} B(0,\epsilon-s(\epsilon/\tau))\times \{s\}$.  We then obtain
$|E^*|\geq |P(\epsilon,\tau)|=(\tau/N)\epsilon^{N-1}\rightarrow \infty$ as $\tau\rightarrow\infty$, which contradicts $|E^*|=m$ for $\tau$ sufficiently large.

Fix $R=1$, and let $t>0$ be given.  Let $E'\in\mathcal{F}$ be given such that $|E'|=|E|$, $E\Delta E'\subset \{x:x_N>t\}$ and $E'\subset \{x:\inf_{y\in E}f_*(x-y)<R\}$.  Then, using the minimality of $E$ and the condition $E\Delta E'\subset \{x:x_N>t\}$, we obtain
\begin{align*}
F(E)&=\mathscr{F}_s(E)+f(-e_N)\mathcal{H}^{N-1}(\partial^*E\cap \{x_N=0\})\\
&\leq F(E')+\mathscr{F}_p(E')-\mathscr{F}_p(E)\\
&\hspace{0.2in}+(f(-e_N)-\omega)(\mathcal{H}^{N-1}(\partial^*E\cap \{x_N=0\})-\mathcal{H}^{N-1}(\partial^*E'\cap \{x_N=0\}))\\
&=F(E')+\mathscr{F}_p(E')-\mathscr{F}_p(E).
\end{align*}
Now, observing that $|E'|=|E|$ implies $|E'\setminus E|=\frac{|E'\Delta E|}{2}$, we obtain
\begin{align*}
\mathscr{F}_p(E')-\mathscr{F}_p(E)&\leq \int_{E'\setminus E} x_Ndx\\
&\leq T|E'\setminus E|=\frac{T}{2}|E'\Delta E|=\epsilon|K|^{1/N}|E|^{-1/N}|E'\Delta E|
\end{align*}
with $\epsilon:=\left(\frac{Tm^{1/N}}{2|K|^{1/N}}\right)$.  Since $E'$ was arbitrary, we conclude that $E$ is an $(\epsilon,R,t)$ minimizer as desired.

\vspace{0.2in}

\underline{Step $2$}: Local bound for $\beta$.

\vspace{0.2in}

Let $0<T_1<T_2<\infty$ be given.  We show that $\sup_{t\in [T_1,T_2]} |\beta(t)|<\infty$.  Indeed, from the definition of $\beta$, it suffices to find $R'>0$ such that $\pi_1(E\cap \{x:T_1\leq x_N\leq T_2\})\subset \{x:|x'|\leq R'\}$.  We claim that this follows from the density estimate of Lemma $\ref{lem_density}$ (the set $E$ satisfies the hypotheses of this lemma by Step $1$ above).  To obtain this, suppose that there existed a sequence $(x_n)\subset E\cap \{x:T_1\leq x_N\leq T_2\}$ such that $|\pi_1(x_n)|\rightarrow\infty$.  We may then choose a subsequence $(x_{n_k})$ such that $(B(x_{n_k},\frac{r_0}{2}):n\in\mathbb{N})$ is a disjoint sequence of balls in $\mathbb{R}^N$.  We then have
\begin{align*}
m=|E|\geq \sum_{k=1}^\infty |B(x_{n_k},\frac{r_0}{2})\cap E|\geq \sum_{k=1}^\infty Cr_0^N=\infty,
\end{align*}
giving a contradiction.  This completes the proof of Lemma $\ref{lem_symm1}$.
\end{proof}

The local boundedness property of $\beta$ then gives the following lemma.
\begin{lem}
\label{lem_w11}
Fix $m>0$, $\omega\in (-f(e_N),f(-e_N))$ and let $f$ be given admissible with $f(x)=\phi(h(x'),x_N)$.  Suppose $E$ is a minimizer for $\mathscr{F}$.  Then $\beta \in W_{loc}^{1,1}((0,T_{\max});\mathbb{R}^{N-1})$ with
\begin{align}
\nonumber \beta'(t)&=\frac{-v'_E(t)}{v_E(t)^2}\left(\int_{E_t} x'dx'-\int_{r_E(t)K_h} x'dx'\right)\\
\nonumber &\hspace{0.2in}+\frac{1}{v_E(t)}\left(\int_{\partial^*E_t} x'\frac{\pi_N(\nu_E(x',t))}{|\pi_1(\nu_E(x',t))|}d\mathcal{H}^{N-2}(x')\right.\\
\label{eq-beta}&\hspace{1.8in}\left.-\int_{\partial^*E^*_t} x'\frac{\pi_N(\nu_{E^*}(x',t))}{|\pi_1(\nu_{E^*}(x',t))|}d\mathcal{H}^{N-2}(x')\right).
\end{align}
\end{lem}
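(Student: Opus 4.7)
The plan is to write $\beta$ as a quotient of $W^{1,1}_{\mathrm{loc}}$ functions and apply the quotient rule. Introduce the moments
$$g_i(t):=\int_{E_t}x_i\,dx',\qquad h_i(t):=\int_{r_E(t)K_h}x_i\,dx',\qquad 1\le i\le N-1,$$
so that the definition of $\beta$ gives $\beta_i(t)v_E(t)=g_i(t)-h_i(t)$. I will argue in three steps: first, establish the needed regularity of $v_E$; second, obtain the distributional derivatives of $g_i$ and $h_i$ by combining the divergence theorem with Lemma \ref{lem_coarea}; and third, apply the quotient rule on $(0,T_{\max})$.

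For the first step, Theorem \ref{thm_concave} gives that $r_E$ is concave on $[0,T_{\max}]$ and strictly positive on $(0,T_{\max})$, so $v_E=|K_h|\,r_E^{N-1}$ is continuous, locally Lipschitz on $(0,T_{\max})$, and bounded below by a positive constant on every compact subinterval $[a,b]\subset(0,T_{\max})$. In particular $v_E\in W^{1,1}_{\mathrm{loc}}((0,T_{\max}))$ and $1/v_E\in L^{\infty}_{\mathrm{loc}}((0,T_{\max}))$.

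For the second step, I fix $\psi\in C_c^\infty((0,T_{\max}))$ and apply the divergence theorem to the vector field $\phi(x)=x_i\psi(x_N)e_N$ on $E$, obtaining
$$\int_0^{T_{\max}}\psi'(t)g_i(t)\,dt=\int_{\partial^*E}x_i\psi(x_N)\pi_N(\nu_E(x))\,d\mathcal{H}^{N-1}(x).$$
By Remark \ref{rem61}, the set $\{x\in\partial^*E\cap J:\pi_1(\nu_E(x))=0\}$ has $\mathcal{H}^{N-1}$-measure zero; multiplying and dividing by $|\pi_1(\nu_E(x))|$ and invoking Lemma \ref{lem_coarea} yields the distributional identity
$$g_i'(t)=-\int_{\partial^*E_t}x_i\,\frac{\pi_N(\nu_E(x',t))}{|\pi_1(\nu_E(x',t))|}\,d\mathcal{H}^{N-2}(x').$$
The same argument applied to $E^*$, using that $E^*_t=r_E(t)K_h$ for a.e.\ $t$, gives the analogous expression for $h_i'$. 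Local integrability of both derivatives is verified by combining the local boundedness of $\beta$ (Lemma \ref{lem_symm1}) with the representation $E_t=r_E(t)K_h+\beta(t)$, which bounds $|x_i|$ uniformly on $\partial^*E_t$ for $t\in[a,b]$ and reduces the integrand to a multiple of $\pi_N(\nu_E)/|\pi_1(\nu_E)|$, whose slice integral is $-v_E'(t)\in L^1([a,b])$ by the first step.

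Finally, since $v_E, g_i, h_i\in W^{1,1}_{\mathrm{loc}}((0,T_{\max}))$ with $v_E$ locally bounded away from $0$, the quotient $\beta_i=(g_i-h_i)/v_E$ belongs to $W^{1,1}_{\mathrm{loc}}$ with
$$\beta_i'=\frac{g_i'-h_i'}{v_E}-\frac{v_E'(g_i-h_i)}{v_E^2},$$
which after substituting the boundary-integral formulas is exactly (\ref{eq-beta}). I expect the main technical point to be the second step: verifying that the distributional derivative of $g_i$ is given by an honest slice integral requires ruling out a concentrated contribution from horizontal portions of $\partial^*E$, which is precisely the role of Remark \ref{rem61}, and controlling $|x_i|$ on $\partial^*E_t$, which uses the symmetry of individual slices established in Lemma \ref{lem_symm1}.
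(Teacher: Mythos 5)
Your proof follows the same route as the paper: write $\beta$ as a quotient with numerator $f(t)=\int_{E_t}x'\,dx'-\int_{r_E(t)K_h}x'\,dx'$ and denominator $v_E$, obtain $f,v_E\in W^{1,1}_{\mathrm{loc}}$ by a distributional Fubini/coarea/integration-by-parts computation (the paper outsources this to Theorem~4.3 of \cite{BCF} and suppresses the details), and finish by the Sobolev quotient rule. Packaging the Fubini-plus-one-dimensional-integration-by-parts step as the divergence theorem for $x_i\psi(x_N)e_N$ on $E$ is exactly the same computation; your appeal to Remark~\ref{rem61} to discard the $\mathcal{H}^{N-1}$-null vertical-normal set and to Lemma~\ref{lem_symm1} to control $|x'|$ on the slices are precisely the ingredients the paper relies on. One small point on the local integrability of the boundary integral: the bound you want is on $\int_{\partial^*E_t}|\pi_N(\nu_E)|/|\pi_1(\nu_E)|\,d\mathcal{H}^{N-2}$, not on the signed quantity $|v_E'(t)|$; the cleanest justification is that this is dominated by $\int_{\partial^*E_t}1/|\pi_1(\nu_E)|\,d\mathcal{H}^{N-2}$, which is in $L^1_t$ because $\min_{|\nu|=1}f(\nu)>0$ and $\mathscr{F}_s(E)<\infty$.

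One caveat you should not gloss over with the phrase ``is exactly (\ref{eq-beta})'': with the sign convention of Lemma~\ref{lem_slice} — which your divergence-theorem computation correctly reproduces — one gets
\begin{align*}
g_i'(t)&=-\int_{\partial^*E_t}x_i\,\frac{\pi_N(\nu_E(x',t))}{|\pi_1(\nu_E(x',t))|}\,d\mathcal{H}^{N-2}(x'),
\end{align*}
and likewise for $h_i'$, so $g_i'-h_i'$ is the \emph{negative} of the bracket appearing in~(\ref{eq-beta}). Thus your (correct) substitution actually yields the opposite sign on that term. This traces back to an internal sign slip in the paper: in the proof of Theorem~\ref{thm_min} the identity $v_E'(t)=+\int_{\partial^*E_t}\pi_N(\nu_E)/|\pi_1(\nu_E)|\,d\mathcal{H}^{N-2}$ is used, which contradicts Lemma~\ref{lem_slice}, and~(\ref{eq-beta}) is stated consistently with that (incorrect) convention. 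Since the two sign slips cancel in the paper's subsequent derivation of $\beta'=0$, the downstream conclusion is unaffected, but you should record the discrepancy rather than assert literal agreement with~(\ref{eq-beta}).
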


This result follows from a simple distributional calculation involving Fubini's theorem, integration by parts and the coarea formula, together with standard product and chain rules in Sobolev spaces.  Since the proof is essentially identical to the proof of Theorem $4.3$ of \cite{BCF} after accounting for the additional term in $\beta$ (which in any case has the same form as the first term), we omit the details.

\vspace{0.2in}

Having shown these lemmas, we are now ready to prove the main symmetry result for minimizers, Theorem $\ref{thm_min}$.

\begin{proof}[Proof of Theorem $\ref{thm_min}$]
We set $\alpha=r_E$, and observe that Theorem $\ref{thm_symm_f}$ implies that $E^*$ is a symmetric minimizer for $\mathscr{F}$, so that Theorem $\ref{thm_concave}$ gives the concavity of $r_{E^*}$ on its support.  However, by the definition of the symmetrization, we have $r_E=r_{E^*}$, so that this property also holds for $\alpha=r_E$.

Turning to the remaining claim, it suffices to show that there exists $\beta_0\in\mathbb{R}^{N-1}$ such that $\beta(t)=\beta_0$ for a.e. $t\in [0,T_{\max}]$.  Note that since $E$ is a minimizer, equality holds for the application of Jensen's inequality in Lemma $3.2$ for a.e. $t\in [0,T_{\max}]$, and therefore there  
exist $c_E(t),c_{E^*}(t):[0,T_{\max}]\rightarrow \mathbb{R}$ such that
\begin{align*}
\pi_N(\nu_E(x',t))&=c_E(t)h(\pi_1(\nu_E(x',t))),\\
\pi_N(\nu_{E^*}(x',t))&=c_{E^*}(t)h(\pi_1(\nu_{E^*}(x',t)))
\end{align*}
for a.e. $t\in [0,T_{\max}]$ and  $\mathcal{H}^{N-2}$-a.e. $x'\in \partial^*E_t$
 ($\partial^*E^*_t$ for the second line).

But by Lemma $2.2$, we have
\begin{align*}
v'_E(t)&=\int_{\partial^* E_t} \frac{\pi_N(\nu_E(x',t))}{|\pi_1(\nu_E(x',t))|}d\mathcal{H}^{N-2}(x')\\
&=\int_{\partial^* E_t} \frac{c_E(t)h(\pi_1(\nu_E(x',t)))}{|\pi_1(\nu_E(x',t))|} d\mathcal{H}^{N-2}(x')\\
&=\int_{\partial^* E_t} c_E(t)h\left(\frac{\pi_1(\nu_E(x',t))}{|\pi_1(\nu_E(x',t))|}\right)d\mathcal{H}^{N-2}(x')\\
&=c_E(t)P_h(E^*_t)
\end{align*}
for a.e. $t\in [0,T_{\max}]$, where we have used that $E_t$ is a translate of $E
^*_t$.

Thus, since $v_E(t)=v_{E^*}(t)$ for all $t$, we have
\begin{align*}
c_E(t)=\frac{v'_E(t)}{P_h(E^*_t)}\quad\textrm{and}\quad c_{E^*}(t)=\frac{v'_E(t)}{P_h(E^*_t)}
\end{align*}

Substituting these expressions into the formula from Lemma $\ref{lem_w11}$ and using the change of variables $x'\mapsto x'-\beta(t)$, we get (since $\partial^*E_t=\beta(t)+\partial^*E^*_t
$)
\begin{align*}
\beta'(t)&=\frac{-v_E'(t)}{v_E(t)}\beta(t)+\frac{1}{v_E(t)}\left(\frac{v_E'(t)}{ P_h(E^*_t)}\right)\\
&\hspace{0.4in}\cdot \left(\int_{\partial^*E_t} x'\frac{h(\pi_1(\nu_E(x',t)))}{| \pi_1(\nu_E(x',t))|}d\mathcal{H}^{N-2}(x')\right.\\
&\hspace{1.6in}\left.-\int_{\partial^*E_t} (x'-\beta(t))\frac{h_1(\pi_1(\nu_E(x' ,t)))}{|\pi_1(\nu_E(x',t))|}d\mathcal{H}^{N-2}(x')\right)\\
&=\frac{-v_E'(t)}{v_E(t)}\beta(t)+\frac{1}{v_E(t)}\left(\frac{v_E'(t)}{P_h(E^*_t)}\right)\\
&\hspace{0.4in}\cdot \left(\int_{\partial^*E_t} \beta(t)\frac{h(\pi_1(\nu_E(x',t)))}{|\pi_1(\nu_E(x',t))|}d\mathcal{H}^{N-2}(x')\right)\\
&=\frac{-v_E'(t)}{v_E(t)}\beta(t)+\frac{1}{v_E(t)}\left(\frac{v_E'(t)}{P_h(E^*_t)}\right)\left(\beta(t)P_h(E^*_t)\right)\\
&=0.
\end{align*}
for a.e. $t\in [0,T_{\max}]$, giving the existence of $\beta_0$ as desired.
\end{proof}

\subsection{ODE characterization of minimizers.}

In this section, we address the issue of obtaining a characterization of minimizing profiles by ODE methods.  In particular, we establish Theorem $\ref{thm_young}$, giving the Euler-Lagrange equation for $r_E(t)$ and the analogue of the boundary (contact angle) condition of Young's law, by a first variation argument:
\begin{proof}[Proof of Theorem $\ref{thm_young}$]
Let $E$ be a given minimizer, and note that since $E$ is symmetric by Theorem $\ref{thm_min}$ we may write
\begin{align*}
\frac{\mathscr{F}(E)}{|K_h|}&=\frac{1}{|K_h|}\left[\omega v_E(0)+\int_0^\infty \phi\left(P_h(K_h)r_E(t)^{N-2},-v_E'(t)\right)dt+\int_0^{\infty} tv_E(t)dt\right]\\
&=\omega r_E(0)^{N-1}+\int_0^\infty r_E(t)^{N-2}\phi(\Lambda,-(N-1)r'_E(t))dt\\
&\hspace{0.2in}+\int_0^{\infty} tr_E(t)^{N-1}dt.
\end{align*} 
Recalling the volume constraint $|E|=m$ and invoking the minimality of $E$, we obtain
\begin{align*}
0&=\frac{d}{d\epsilon}\bigg[\omega (r_E(0)+\epsilon \psi(0))^{N-1}\\
&\hspace{0.2in}+\int_0^\infty (r_E(t)+\epsilon \psi(t))^{N-2}\phi\left(\Lambda,-(N-1)(r'_E(t)+\epsilon \psi'(t))\right)dt\\
&\hspace{0.4in}+\int_0^{\infty} t(r_E(t)+\epsilon \psi(t))^{N-1}dt+\lambda\left(\int_0^\infty (r_E(t)+\epsilon\psi(t))^{N-1}dt-m\right)\bigg]\bigg|_{\epsilon=0}\\
&=\omega (N-1)r_E(0)^{N-2}\psi(0)\\
&\hspace{0.2in}+\int_0^\infty (N-2)r_E(t)^{N-3}\psi(t)\phi\left(\Lambda,-(N-1)r'_E(t)\right)dt\\
&\hspace{0.2in}-\int_0^\infty (N-1)r_E(t)^{N-2}\partial_2\phi\left(\Lambda,-(N-1)r'_E(t)\right)\psi'(t)dt\\
&\hspace{0.4in}+\int_0^{\infty} t(N-1)r_E(t)^{N-2}\psi(t)dt+\lambda\int_0^\infty (N-1)r_E(t)^{N-2}\psi(t)dt
\end{align*}
for every $\psi\in C^1([0,\infty))$ having compact support.  Integrating by parts in the third term, one obtains
\begin{align*}
0&=\omega (N-1)r_E(0)^{N-2}\psi(0)\\
&\hspace{0.2in}+\int_0^\infty (N-2)r_E(t)^{N-3}\psi(t)\phi\left(\Lambda,-(N-1)r'_E(t)\right)dt\\
&\hspace{0.2in}+(N-1)r_E(0)^{N-2}\partial_2\phi\left(\Lambda,-(N-1)r'_E(0)\right)\psi(0)\\
&\hspace{0.2in}+\int_0^\infty \frac{d}{dt}\left[(N-1)r_E(t)^{N-2}\partial_2\phi\left(\Lambda,-(N-1)r'_E(t)\right)\right]\psi(t)dt\\
&\hspace{0.4in}+\int_0^{\infty} t(N-1)r_E(t)^{N-2}\psi(t)dt+\lambda\int_0^\infty (N-1)r_E(t)^{N-2}\psi(t)dt
\end{align*}
Since this must hold for every $\psi$, we obtain the Euler-Lagrange equation ($\ref{el1}$) along with the boundary condition ($\ref{youngs}$) as desired.
\end{proof}

\subsection{Uniqueness of minimizers.}

With the ODE characterization in hand, we now investigate uniqueness of minimizers, with the goal of proving Theorem \ref{thm_unique}.  We model our arguments on the isotropic case studied by Finn \cite{Finn1,FinnBook}, where the analysis is based on a one-to-one correspondence between solutions of the Euler-Lagrange equation and solutions of the capillary surface equation.  

We partition our analysis into three steps.  In the first step, we make a suitable change of variables which serves to characterize minimizers as elements a family of solutions to a related ODE indexed by a parameter $v_0\in\mathbb{R}$.  The uniqueness will then follow by showing that the volume constraint and boundary condition identified in Theorem $\ref{thm_young}$ can only be satisfied simultaneously for a single value of $v_0$, which is accomplished in the second and third steps.

\begin{proof}[Proof of Theorem $\ref{thm_unique}$]
Note that when $\omega$ belongs to $(-f(e_N),0)$, the profile of an arbitrary minimizer is a graph; this follows from the boundary condition (\ref{youngs}) of Theorem \ref{thm_young} and the convexity of minimizers.  For the sake of simplicity, we shall therefore restrict ourselves to the case $\omega\in (-f(e_N),0)$.  We will return to the general case at the conclusion of the proof.  

As mentioned above, we proceed in three steps.  The first step consists of rewriting the equation under a suitable change of variables.

\vspace{0.2in}

\underline{Step $1$}: (change of variables)

\vspace{0.2in}

\begin{figure}
\centering
\def\svgwidth{0.82\columnwidth}
%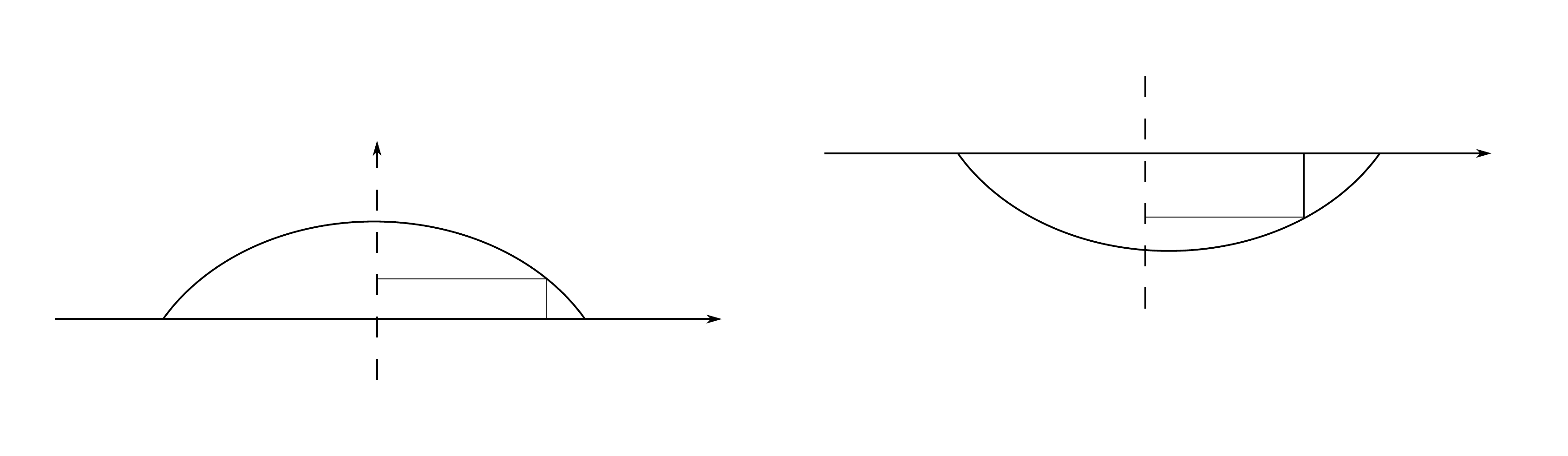
%% Creator: Inkscape inkscape 0.48.0, www.inkscape.org
%% PDF/EPS/PS + LaTeX output extension by Johan Engelen, 2010
%% Accompanies image file 'fig-ode-cvar.pdf' (pdf, eps, ps)
%%
%% To include the image in your LaTeX document, write
%%   \input{<filename>.pdf_tex}
%%  instead of
%%   \includegraphics{<filename>.pdf}
%% To scale the image, write
%%   \def\svgwidth{<desired width>}
%%   \input{<filename>.pdf_tex}
%%  instead of
%%   \includegraphics[width=<desired width>]{<filename>.pdf}
%%
%% Images with a different path to the parent latex file can
%% be accessed with the `import' package (which may need to be
%% installed) using
%%   \usepackage{import}
%% in the preamble, and then including the image with
%%   \import{<path to file>}{<filename>.pdf_tex}
%% Alternatively, one can specify
%%   \graphicspath{{<path to file>/}}
%% 
%% For more information, please see info/svg-inkscape on CTAN:
%%   http://tug.ctan.org/tex-archive/info/svg-inkscape

\begingroup
  \makeatletter
  \providecommand\color[2][]{%
    \errmessage{(Inkscape) Color is used for the text in Inkscape, but the package 'color.sty' is not loaded}
    \renewcommand\color[2][]{}%
  }
  \providecommand\transparent[1]{%
    \errmessage{(Inkscape) Transparency is used (non-zero) for the text in Inkscape, but the package 'transparent.sty' is not loaded}
    \renewcommand\transparent[1]{}%
  }
  \providecommand\rotatebox[2]{#2}
  \ifx\svgwidth\undefined
    \setlength{\unitlength}{972.28798828pt}
  \else
    \setlength{\unitlength}{\svgwidth}
  \fi
  \global\let\svgwidth\undefined
  \makeatother
  \begin{picture}(1,0.29052297)%
    \put(0,0){\includegraphics[width=\unitlength]{fig-ode-cvar.pdf}}%
    \put(0.34463071,0.04914262){\color[rgb]{0,0,0}\makebox(0,0)[lb]{\smash{$r_E(t)$}}}%
    \put(0.25203822,0.20082124){\color[rgb]{0,0,0}\makebox(0,0)[lb]{\smash{$t$}}}%
    \put(0.44339775,0.04797195){\color[rgb]{0,0,0}\makebox(0,0)[lb]{\smash{$r$}}}%
    \put(0.16845143,0.10796188){\color[rgb]{0,0,0}\makebox(0,0)[lb]{\smash{$u(r)$}}}%
    \put(0.91925156,0.16150063){\color[rgb]{0,0,0}\makebox(0,0)[lb]{\smash{$r$}}}%
    \put(0.84983672,0.1109469){\color[rgb]{0,0,0}\makebox(0,0)[lb]{\smash{$v(r)$}}}%
  \end{picture}%
\endgroup
\caption{The change of variables used in Step $1$ of the proof of Theorem $\ref{thm_unique}$.  The choice of coordinates is made to ensure that $v'$ is positive and to simplify the equation by removing the scaling factor $\Lambda$ and eliminating the Lagrange multiplier term.\label{fig-ode-cvar}}
\end{figure}

For each $t\in [0,T_{\max}]$, set $r=r_E(t)$ and $u(r)=r_E^{-1}(r)=t$.  This gives
\begin{align}
\nonumber &-\frac{d}{dr}\left[(N-1)r^{N-2}\partial_2\phi\left(\Lambda,-\frac{(N-1)}{u'(r)}\right)\right]\frac{1}{u'(r)}\\
&\hspace{0.2in}=(N-2)r^{N-3}\phi\left(\Lambda,-\frac{(N-1)}{u'(r)}\right)+(N-1)(u+\lambda)r^{N-2}\label{eq_chv1}
\end{align}
from which we obtain the equation 
\begin{align}
-\frac{d}{dr}\left[\Lambda r^{N-2}\partial_1\phi(-\Lambda u'(r),N-1)\right]&=(N-1)(u(r)+\lambda)r^{N-2},
\label{eq_chv2}
\end{align}
with boundary conditions $u'(0)=0$ and $-\partial_2\phi(-\Lambda u'(R_{\max}),N-1)=\omega$ (see the calculations in Appendix $B$).
Defining $v$ by
\begin{align*}
v(r)=-\Lambda^{-1}\left(u\left(\Lambda^2 r\right)+\lambda\right),\quad r\geq 0
\end{align*}
we find that $v$ solves
\begin{align}
\left\lbrace\begin{array}{rl}
\frac{d}{dr}\left[r^{N-2}\partial_1\phi(v',N-1)\right]&=(N-1)r^{N-2}v,\\
v'(0)&=0.
\end{array}\right.
\label{eq_cap}
\end{align}
with the additional boundary condition $-\partial_2\phi(v'(R_{\max}/\Lambda^2),N-1)=\omega$.

Observing that solutions to ($\ref{eq_cap}$) are uniquely characterized by the value of the initial height $v(0)$ (and noting that invoking another change of variables allows us to map solutions of ($\ref{eq_cap}$) back to sets), it will suffice to show that the volume constraint along with the boundary condition determine $v(0)$ uniquely.  

Toward this end, we begin with some basic observations about solutions of ($\ref{eq_cap}$).  Note that integrating ($\ref{eq_cap}$) with respect to $r$ gives
\begin{align}
r^{N-2}\partial_1\phi(v',N-1)=\int_0^r (N-1)\rho^{N-2}v(\rho)d\rho.\label{eq123}
\end{align}
Note also that since $\phi$ is strictly convex and positively $1$-homogeneous with $\partial_1\phi(0,1)=0$, we have
\begin{align}
\partial_1\phi(v',N-1)\quad\left\lbrace\begin{array}{l}<0\quad\textrm{if}\,v'<0,\\=0\quad\textrm{if}\,v'=0,\\>0\quad\textrm{if}\,v'>0.\end{array}\right.\label{eq_phi_ineq}
\end{align}

We now observe that simple continuity arguments allow us to reduce to the case $v(0)>0$.  In particular, any solution $v$ to ($\ref{eq_cap}$) with $v(0)<0$ is decreasing on its domain, and therefore cannot correspond to a minimizing shape.  To see this, let $v_0<0$ be given and let $v$ be the solution to ($\ref{eq_cap}$) with $v(0)=v_0$.  The equality $(\ref{eq123}$) then implies $\partial_1\phi(v'(r),N-1)<0$ for $r>0$ sufficiently small.  Recalling ($\ref{eq_phi_ineq}$), we therefore have $v'(r)<0$ for $r$ sufficiently small.  Suppose for contradiction that $v$ is not decreasing, and set $r_0:=\sup\{r:v'(r)<0\}<\infty$.  The continuity of $v'$ now implies that $v'(r_0)=0$, while ($\ref{eq123}$) gives
\begin{align*}
0=r_0^{N-2}\partial_1\phi(v'(r_0),N-1)<v(0)\int_0^{r_0} (N-1)\rho^{N-2}d\rho=v_0r_0^{N-1},
\end{align*}
which contradicts $v_0<0$.

An identical argument shows that if $v$ solves ($\ref{eq_cap}$) with $v(0)>0$, then $v$ is increasing.  Moreover, $v$ is strictly convex: expanding the derivative on the left side of ($\ref{eq_cap})$ gives the equality
\begin{align}
r^{N-2}\partial^2_{11}\phi(v',N-1)v''=(N-1)r^{N-3}\Delta,\label{eqa123}
\end{align}
where we have set
\begin{align}
\Delta=rv-\frac{N-2}{N-1}\partial_1\phi(v',N-1).\label{Delta}
\end{align}
Since $v$ is increasing, ($\ref{eq123}$) gives $r^{N-2}\partial_1\phi(v',N-1)\leq v(r)r^{N-1}$, which may be rewritten as
\begin{align}
\Delta\geq \frac{\partial_1\phi(v',N-1)}{N-1}>0\quad \forall r>0,\label{Delta_ineq}
\end{align}  
where to conclude the strict positivity we have used ($\ref{eq_phi_ineq}$) along with the fact that $v$ is increasing.  Recalling ($\ref{eqa123}$) and the convexity of $\phi$, we obtain that $v''(r)>0$ for $r>0$, which establishes the desired strict convexity for $v$.

The next step in our argument is to calculate the volume and its derivatives.

\vspace{0.2in}

\underline{Step $2$}: (volume calculations)

\vspace{0.2in}

Fix $v_0>0$ and let $v=v(\cdot;v_0)$ be the solution of ($\ref{eq_cap}$) corresponding to the initial condition $v(0)=v_0$.  To simplify notation, we will often write $v'(r;v_0)=\partial_r v(r;v_0)$.  For each $s\geq 0$, define 
\begin{align*}
V_{v_0}(s)=|\{(x',t):v(|x'|)\leq t\leq v(r(s))\}|,
\end{align*}
where $r(s)\geq 0$ is chosen so that $v'(r(s))=s$ is satisfied (the strict convexity of $v$ established in Step $1$ implies that a unique such value exists).

To compute $V_{v_0}(s)$, we write
\begin{align}
\nonumber V_{v_0}(s)&=\omega_{N-1}r(s)^{N-1}v(r(s))-|S^{N-2}|\int_0^{r(s)} \rho^{N-2}v(\rho)d\rho\\
\nonumber &=\omega_{N-1}\left(r(s)^{N-1}v(r(s))-\int_0^{s} \frac{d}{ds}\left[r(s)^{N-2}\partial_1\phi(s,N-1)\right]ds\right)\\
\label{eqvcalc}&=\omega_{N-1}\left(r(s)^{N-1}v(r(s))-r(s)^{N-2}\partial_1\phi(s,N-1)\right).
\end{align}
where to obtain the second equality we have noted that $|S^{N-2}|=(N-1)\omega_{N-1}$.

The rest of our argument will consist of establishing a suitable monotonicity property for $V_{v_0}(s)$.  Toward this end, we will use the notation $V=V(s)$, $r=r(s)$ and $v=v(r(s))$, as well as $\dot{V}(s)=\frac{d}{dv_0}V_{v_0}[s]$, $\dot{r}(s)=\frac{d}{dv_0}[r(s)]$ and $\dot{v}(s)=\frac{d}{dv_0}[v(r(s);v_0)]$.  In particular, we have
\begin{align}
\dot{V}&=\omega_{N-1}r^{N-3}((N-1)\dot{r}\Delta+r^2\dot{v})\label{dotV}
\intertext{and}
\partial_s \dot{V}&=\frac{\omega_{N-1}r^{N-1}s\partial^2_{11}\phi(s,N-1)}{(N-1)\Delta^2}((N\Delta-rv)\dot{r}-r^2\dot{v}),\label{dotVs}
\end{align}
for every $v_0>0$ and $s\geq 0$, where $\Delta$ is as in ($\ref{Delta}$).

To obtain ($\ref{dotVs}$), one first differentiates $V_{v_0}(s)$ with respect to $s$, obtaining 
\begin{align}
\nonumber \partial_s V_{v_0}(s)&=\omega_{N-1}\bigg((N-1)r^{N-2}(\partial_s r)v+r^{N-1}(\partial_s v)\\
\nonumber &\hspace{1.2in}-(N-2)r^{N-3}(\partial_s r)\partial_1\phi(s,N-1)\\
\nonumber &\hspace{1.2in}-r^{N-2}\partial^2_{11}\phi(s,N-1)\bigg)\\
\nonumber &=\omega_{N-1}r^{N-1}(\partial_s v)
\end{align}
where to obtain the second equality, we have observed that after setting $r=r(s)$ and recalling that by the definition of $r(s)$, $\partial_1\phi(v'(r(s)),N-1)=\partial_1\phi(s,N-1)$, differentiating ($\ref{eq123}$) with respect to $s$ gives the identity
\begin{align}
\partial_s r&=\frac{r\partial^2_{11}\phi(s,N-1)}{(N-1)\Delta}.\label{eq_rs}
\end{align}
Now, noting that the definition of $r(s)$ implies $\partial_s[v(r(s))]=s\partial_s r$ and using the identity $(\ref{eq_rs})$ once again, we obtain
\begin{align}
\partial_s V_{v_0}(s)&=\frac{\omega_{N-1}r^{N}s\partial^2_{11}\phi(s,N-1)}{(N-1)\Delta},\label{eqabcd1}
\end{align}
Differentiating ($\ref{eqabcd1}$) with respect to $v_0$ now gives ($\ref{dotVs}$) as desired.

\vspace{0.2in}

\underline{Step $3$}: (conclusion of the argument)

\vspace{0.2in}

Choosing $s_*>0$ as the unique value for which $-\partial_2\phi(s_*,N-1)=\omega$, and noting that
\begin{align*}
|E|=P(K_h)\int_0^{R_{\max}} r^{N-2}u(r)dr=\frac{P(K_h)\Lambda^{2N-1}}{|S^{N-2}|}V_{v_0}(s_*),
\end{align*}
it will be enough to show that there exists a unique value $v_0>0$ such that $V_{v_0}(s_*)=|S^{N-2}|\,|E|/(P(K_h)\Lambda^{2N-1})$.  To accomplish this, we will show that for all $v_0>0$,
\begin{align*}
\frac{d}{dv_0}[V_{v_0}(s_*)]<0.
\end{align*}

Noting that $\dot{V}(0)=0$ for any $v_0>0$, it will be enough to show 
\begin{align}
\partial_s\dot{V}<0\label{goal1}
\end{align}
for all $s>0$ and $v_0>0$.  In view of ($\ref{dotVs}$), let $v_0>0$ be given and observe that the convexity of $\phi$ gives
\begin{align*}
\frac{\omega_{N-1}r^{N-1}s\partial^2_{11}\phi(s,N-1)}{(N-1)\Delta^2}>0,
\end{align*}
for any $s>0$.  On the other hand, the definition of $\Delta$ and the inequality ($\ref{Delta_ineq}$) imply $\Delta\geq rv-(N-2)\Delta$, i.e.
\begin{align}
N\Delta-rv\geq \Delta>0.\label{eqDelta}
\end{align}
The inequality ($\ref{goal1}$) will therefore follow if we show $\dot{r}(s)<0$ and $\dot{v}(s)>0$ for $s>0$.

We begin with $\dot{r}$, showing that $\dot{r}<0$ on some interval $(0,\delta_0)$.  Let $s\geq 0$ be given, and note that differentiating ($\ref{eq123}$) with respect to $v_0$ gives
\begin{align}
\nonumber &(N-2)r^{N-3}\dot{r}\partial_1\phi(s,N-1)\\
&\hspace{0.2in}=(N-1)r^{N-2}\dot{r}v+\int_0^r (N-1)\rho^{N-2}\partial_{v_0}v(\rho;v_0)d\rho.\label{eq_diff123}
\end{align}

We would now like to express the values $\partial_{v_0}[v(\rho;v_0)]$ in terms of the function $\dot{v}(\cdot)=\frac{d}{dv_0}[v(r(\cdot);v_0)]$.  To accomplish this, recall that $\dot{r}=\frac{d}{dv_0}r$ and note that for any $s\geq 0$, we have
\begin{align*}
\dot{v}(s)=\frac{d}{dv_0}[v(r(s);v_0)]=\partial_r v(r(s))\dot{r}(s)+\partial_{v_0}v(r(s);v_0)
\end{align*}
Now, fixing $\rho>0$ and setting $s_\rho=v'(\rho;v_0)$, we obtain
\begin{align}
\dot{v}(s_\rho)=s_\rho\dot{r}(s_\rho)+\partial_{v_0}[v(\rho;v_0)],\label{eq_vsrho}
\end{align}
where we have recalled that $(\partial_r v)(r(s))=s$ by the definition of $r(s)$, and that the choice of $s_\rho$ implies $r(s_\rho)=\rho$.  Combining ($\ref{eq_vsrho}$) with ($\ref{eq_diff123}$), we obtain
\begin{align}
-(N-1)\dot{r}r^{N-3}\Delta&=\int_0^r (N-1)\rho^{N-2}(\dot{v}(s_\rho)-s_\rho\dot{r}(s_\rho))d\rho.\label{eq34}
\end{align}
The desired local negativity of $\dot{r}$ now follows by observing that $\dot{v}(0)=1$, $\dot{r}(0)=0$ and $r(0)=0$ so that the right hand side of ($\ref{eq34}$) is positive for $s$ is sufficiently small (recall that the convexity of $v$ shows that $r(s)$ is increasing in $s$).

Let $I\subset\mathbb{R}$ be the largest interval containing $0$ for which $\dot{r}<0$ on $I$.  We claim that $\dot{v}>0$ on $I$.  To obtain this, note that $\dot{v}(0)=1$ and the continuity of $\dot{v}$ imply that $\dot{v}>0$ on some interval $(0,\delta_1)$ and let $I_1$ denote the largest such interval.  Suppose for contradiction that $\sup I_1<\sup I$.  By continuity we have $\dot{v}(\sup I_1)=0$.  Let $s\in I_1$ be given and use ($\ref{eq_rs}$) and $\partial_s[v(r(s))]=s\partial_s r$ to write
\begin{align*}
\partial_s \dot{v}&=\partial_{v_0}\left[\frac{sr\partial^2_{11}\phi(s,N-1)}{(N-1)\Delta}\right]\\
&=\frac{s\partial^2_{11}\phi(s,N-1)}{(N-1)\Delta^2}((\Delta-rv)\dot{r}-r^2\dot{v})\\
&>-\frac{sr^2\dot{v}\partial^2_{11}\phi(s,N-1)}{(N-1)\Delta^2}\\
&>-\frac{(N-1)^2sr^2\dot{v}\partial^2_{11}\phi(s,N-1)}{[\partial_1\phi(s,N-1)]^2},
\end{align*}
where to obtain the first inequality we have noted that the definition of $\Delta$ along with ($\ref{eq_phi_ineq}$) gives $\Delta-rv=-\frac{N-2}{N-1}\partial_1 \phi(s,N-1)<0$, while to obtain the second inequality we have used ($\ref{Delta_ineq}$) and the positivity of $\dot{v}$ (that is, $s\in I_1$).  Integrating this differential inequality, we obtain
\begin{align}
\dot{v}(s)>\exp\left(-\int_0^{s} \frac{(N-1)^2sr^2\partial^2_{11}\phi(s,N-1)}{[\partial_1\phi(s,N-1)]^2}ds\right)>0\label{eq11a}
\end{align}
with $s=\sup I_1$, which contradicts $\dot{v}(\sup I_1)=0$.  Thus $\dot{v}>0$ on $I$.

We now conclude by showing that $I=(0,\infty)$, and thus $(\ref{goal1})$ holds as desired.  Suppose for contradiction that $\sup I<\infty$.  The continuity of $\dot{r}$ then implies $\dot{r}(\sup I)=0$, so that
\begin{align*}
\dot{V}(\sup I)=\omega_{N-1}r(\sup I)^{N-1}\dot{v}(\sup I)
\end{align*}
Observing that ($\ref{eq11a}$) holds for $s=\sup I$, we obtain $\dot{V}(\sup I)>0$, contradicting 
\begin{align*}
\dot{V}(\sup I)\leq 0,
\end{align*}
which follows from the expression ($\ref{dotV}$) for $\dot{V}$ combined with the bound ($\ref{eqDelta}$) and the inequalities $\dot{r}<0$ and $\dot{v}>0$ on $I$.   Since $u_0>0$ was arbitrary, this completes the proof of Theorem $\ref{thm_unique}$.

We finish the proof by observing that the restriction to $\omega\in (-f(e_N),0)$ imposes no loss of generality.  In particular, although the change of variables used above becomes singular at the point where $r'_E(t)=0$ (see Figures $\ref{fig-mina}$ and $\ref{fig-minbc}$) when $\omega\geq 0$ and, moreover, the profile of minimizers is no longer a graph for $\omega>0$, one can recover the argument by noting that the convexity of minimizers implies that the minimizers divides into ``top'' and ``bottom'' sets where $r_E(t)$ is monotone.  The extension to general $\omega\in (-f(e_N),f(-e_N))$ then carries through as in the isotropic case \cite{Finn1,FinnBook} (where the same singularity is present) by repeating the analysis on each of these sets.
\end{proof}

\appendix

\section{A one-dimensional lemma concerning local convexity}

We now recall an elementary lemma showing that if a continuous function on $\mathbb{R}$ fails to be concave, then it has a region of local strict convexity.  This result may be seen as a contrapositive formulation of the statement that for continuous functions, local concavity implies concavity.  We remark that this lemma is used to establish the convexity of symmetric minimizers in the proof of Theorem \ref{thm_concave}.
\begin{lem}
\label{lem_concave}
Let $I\subset\mathbb{R}$ be an open interval and suppose that $r:I\rightarrow\mathbb{R}$ is a continuous function.  Let $t_1,t_2\in I$ be given along with $s\in (0,1)$ such that
\begin{align}
r(st_1+(1-s)t_2)<sr(t_1)+(1-s)r(t_2).\label{Aeqs0}
\end{align}
Then for every $\epsilon>0$ there exists $t'_1,t'_2\in (t_1,t_2)$ with $t'_1<t'_2$ and $|t'_1-t'_2|<\epsilon$ such that for a.e. $s'\in (0,1)$,
\begin{align}
\label{Ahyp1}r(s't'_1+(1-s')t'_2)<s'r(t'_1)+(1-s')r(t'_2).
\end{align}
\end{lem}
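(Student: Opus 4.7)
My strategy is to exhibit a sub-interval $(t_1', t_2') \subset (t_1, t_2)$ with $t_2' - t_1' < \epsilon$ on which $r$ lies strictly below the chord joining $(t_1', r(t_1'))$ to $(t_2', r(t_2'))$ at \emph{every} interior point (which is stronger than the a.e.\ statement in (\ref{Ahyp1})). The argument proceeds in two stages: first extract a "below-chord component", then localize it using a standard local-to-global concavity principle.

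First, let $L$ be the affine function with $L(t_1) = r(t_1)$ and $L(t_2) = r(t_2)$. The open set $U = \{t \in (t_1, t_2) : r(t) < L(t)\}$ is nonempty by (\ref{Aeqs0}), so I take the connected component $(\alpha, \beta)$ of $U$ containing $st_1 + (1-s)t_2$. Continuity forces $r(\alpha) = L(\alpha)$ and $r(\beta) = L(\beta)$, and since $L$ is affine, the chord through $(\alpha, r(\alpha))$ and $(\beta, r(\beta))$ agrees with $L$ on $[\alpha, \beta]$; hence $r < $ this chord pointwise on $(\alpha, \beta)$. If $\beta - \alpha < \epsilon$, I simply set $t_1' = \alpha$, $t_2' = \beta$ and finish.

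Otherwise, I note that the conditions $r = L$ at the endpoints of $[\alpha, \beta]$ together with $r < L$ on the interior preclude concavity of $r$ on $[\alpha, \beta]$: concavity plus affine equality at endpoints would force $r \geq L$ throughout. I then invoke the classical fact that a continuous function on a closed interval which is concave on every sub-interval of length below some $\delta_0 > 0$ must be concave on the full interval (proved by a dyadic subdivision argument: partitioning into $2^n$ pieces of length $< \delta_0/2$ and using concavity on each length-$2\delta_0/2^n$ window yields midpoint inequalities at the dyadic grid points, which upgrade to discrete concavity along the grid and, by continuity in the limit, to concavity of $r$). Applying the contrapositive with $\delta_0 = \epsilon$ produces a sub-interval $[a, b] \subset (\alpha, \beta)$ with $b - a < \epsilon$ on which $r$ fails to be concave; the interiority $[a, b] \subset (\alpha, \beta)$ (rather than merely $\subset [\alpha, \beta]$) can be arranged by applying the local-to-global principle on a slightly shrunken sub-interval and taking a small perturbation, which ensures $a, b \in (t_1, t_2)$.

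Finally, the failure of concavity on $[a, b]$ provides $c, d \in [a, b]$ with $c < d$ and $s^* \in (0,1)$ such that $r(s^* c + (1-s^*) d) < s^* r(c) + (1-s^*) r(d)$. Repeating the first-stage construction with the triple $(c, d, s^*)$ and the chord $L_{cd}$ from $(c, r(c))$ to $(d, r(d))$ yields a connected component $(\alpha', \beta') \subset (c, d)$ of $\{t \in (c,d) : r(t) < L_{cd}(t)\}$ on which $r$ is strictly below the chord from $(\alpha', r(\alpha'))$ to $(\beta', r(\beta'))$. Since $\beta' - \alpha' \leq d - c \leq b - a < \epsilon$ and $(\alpha', \beta') \subset (t_1, t_2)$, the pair $t_1' = \alpha'$, $t_2' = \beta'$ meets all requirements; the strict pointwise inequality implies the a.e.\ statement in (\ref{Ahyp1}). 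The main obstacle is the local-to-global concavity step: one must be careful that the dyadic window-overlaps stay below the scale $\delta_0$, and one must verify that the small non-concave sub-interval can be located strictly inside $(\alpha, \beta)$ so that $t_1', t_2'$ lie in the open interval $(t_1, t_2)$ as required.
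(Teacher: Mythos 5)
Your proof is correct in substance but takes a genuinely different route from the paper's. The paper's argument is a single-shot construction: it first observes that the set $M$ of slopes of the (maximal) affine pieces of $r$ on $(t_1,t_2)$ is countable, perturbs $t_2$ slightly so that the secant slope $\frac{r(t_2)-r(t_1)}{t_2-t_1}\notin M$, then considers the vertical-gap function $f(\mu)=\mu r(t_1)+(1-\mu)r(t_2)-r(\mu t_1+(1-\mu)t_2)$, maximizes it at some $s_*$, and takes $t_1',t_2'$ to be the endpoints of the connected component of the superlevel set $\{f>(1-\delta)f(s_*)\}$ containing $s_*$; the perturbation guarantees this component shrinks to $\{s_*\}$ as $\delta\to 0$, which gives $|t_2'-t_1'|<\epsilon$. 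Your argument instead iterates a "below-chord component" extraction with a local-to-global concavity principle (dyadic subdivision), which is a more elementary and self-contained route: it avoids the countability-of-slopes device and the perturbation of $t_2$ entirely, at the cost of invoking (and sketching) the local-to-global lemma. Both yield the stronger pointwise inequality (for every $s'\in(0,1)$), of which the a.e.\ statement is a consequence. Two small rough spots worth tightening in your write-up: (i) in the first branch ($\beta-\alpha<\epsilon$), taking $t_1'=\alpha$, $t_2'=\beta$ may violate $t_1',t_2'\in(t_1,t_2)$ when $\alpha=t_1$ or $\beta=t_2$, so it is cleaner to always run the second branch, noting that $r$ can never be concave on the \emph{open} interval $(\alpha,\beta)$ (a concave function on $(\alpha,\beta)$ continuous up to the endpoints with $r=L$ there would satisfy $r\ge L$, contradicting $r<L$ on the interior); and (ii) the "slightly shrunken sub-interval" step to get $[a,b]\subset(\alpha,\beta)$ strictly is most cleanly done by first extracting from the non-concavity of $r$ on $(\alpha,\beta)$ a witnessing closed sub-interval $[x,y]\subset(\alpha,\beta)$, and then applying the local-to-global contrapositive on $[x,y]$.
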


\begin{proof}
We begin by defining the set of slopes of any piecewise linear components of the graph of $r$,
\begin{align*}
M=\{m\in\mathbb{R}:\exists\, t_1<a_m<b_m\leq t_2\,\textrm{s.t.}\,\frac{r(t)-r(a_m)}{t-a_m}=m\,\textrm{for}\,a_m<t<b_m\},
\end{align*}
Note that $m,n\in M$, $m\neq n$ implies that $(a_m,b_m)\cap (a_n,b_n)=\emptyset$.  We therefore have the bound $\sum_{m\in M} b_m-a_m\leq t_2-t_1$, so that $M$ is at most countable.

By choosing $t_2$ slightly smaller, we can ensure 
\begin{align}
\label{Aeqasdf1}
\frac{r(t_2)-r(t_1)}{t_2-t_1}\not\in M.
\end{align}
Note that this perturbation of $t_2$ may cause $s$ to no longer obey ($\ref{Aeqs0}$); however, for small perturbations, the continuity of $r$ implies that this property can be restored by a suitable perturbation of $s$.  When necessary, we therefore let $s$ refer to this new value.  For $\mu\in (0,1)$, define
\begin{align*}
f(\mu)=\mu r(t_1)+(1-\mu)r(t_2)-r(\mu t_1+(1-\mu)t_2),
\end{align*}
so that $f(\mu)$ is the vertical distance from the point $(\mu t_1+(1-\mu)t_2,r(\mu t_1+(1-\mu)t_2))$ to the line connecting the points $(t_1,r(t_1))$ and $(t_2,r(t_2))$.  Then $f$ is a continuous function with $f(0)=f(1)=0$ and $f(s)>0$.  By continuity, we may find $s_*\in (0,1)$ such that
\begin{align}
f(s_*)=\sup_{\mu \in (0,1)} f(\mu).\label{Aeqasdf}
\end{align}
Fix $\delta>0$ to be determined later in the argument.  Then $\{s:f(s)>(1-\delta) f(s_*)\}$ is a nonempty open set, which can be written as a union of disjoint open intervals.  Let $I$ be the interval containing $s_*$ and set $s_1=\inf I$, $s_2=\sup I$.  Then $f$ continuous implies $f(s_1)=f(s_2)=(1-\delta) f(s_*)$.  Define $t'_1=s_1t_1+(1-s_1)t_2$ and $t'_2=s_2t_1+(1-s_2)t_2$.  The construction of $I$ then shows that ($\ref{Ahyp1}$) holds.

Letting $\epsilon>0$ be given, it remains to check that $\delta$ can be chosen to ensure $t'_2-t'_1<\epsilon$.  For this purpose, note that ($\ref{Aeqasdf1}$) and ($\ref{Aeqasdf}$) imply that $s_1$ and $s_2$ tend to $s_*$ as $\delta\rightarrow 0$.  Indeed, if (up to subsequences) $s_i\rightarrow s''_i$, $t'_i\rightarrow t''_i$, $i=1,2$ as $\delta\rightarrow 0$ and $s''_1<s_*$,  then we have $f(s''_1)=f(s''_2)=f(s_*)$.   A simple calculation then shows that $r$ is linear on the interval $[t''_1,t''_2]$ with slope 
\begin{align*}
\frac{r_E(t_2)-r_E(t_1)}{t_2-t_1}
\end{align*}
contradicting ($\ref{Aeqasdf1}$).  We may therefore choose $\delta$ sufficiently small so that $t'_2-t'_1<\epsilon$, which completes the proof of the lemma.
\end{proof}

\section{Change of variables for the ODE characterization}

In this appendix, for completeness we give the derivation of ($\ref{eq_chv2}$) in the case $u'<0$, starting from the Euler-Lagrange equation ($\ref{eq_chv1}$).  We begin by recalling that ($\ref{eq_chv2}$) was
\begin{align*}
-\frac{d}{dr}\left[\Lambda r^{N-2}\partial_1\phi(-\Lambda u'(r),N-1)\right]&=(N-1)(u(r)+\lambda)r^{N-2}
\end{align*}
Now, note that $\phi$ positively $1$-homogeneous implies
\begin{align*}
\partial_2\phi(\lambda a,\lambda b)=\partial_2\phi(a,b)
\end{align*}
and
\begin{align*}
\phi(a,b)=a\partial_1\phi(a,b)+b\partial_2\phi(a,b)
\end{align*}
for every $\lambda>0$.  We therefore obtain
\begin{align*}
&-\frac{d}{dr}\left[(N-1)r^{N-2}\partial_2\phi(\Lambda, -\frac{N-1}{u'})\right]\\
&\hspace{0.2in}=-\frac{d}{dr}\left[(N-1)r^{N-2}\partial_2\phi(-\Lambda u',N-1)\right]\\
&\hspace{0.2in}=-\frac{d}{dr}\left[r^{N-2}\phi(-\Lambda u',N-1)+\Lambda u' r^{N-2}\partial_1\phi(-\Lambda u', N-1)\right]\\
&\hspace{0.2in}=-(N-2)r^{N-3}\phi(-\Lambda u',N-1)+\Lambda r^{N-2}u''\partial_1\phi(-\Lambda u', N-1)\\
&\hspace{0.4in}-\frac{d}{dr}\left[\Lambda r^{N-2}\partial_1\phi(-\Lambda u',N-1)\right]u'-\Lambda r^{N-2}\partial_1\phi(-\Lambda u',N-1)u''\\
&\hspace{0.2in}=-(N-2)r^{N-3}\phi(-\Lambda u',N-1)-\frac{d}{dr}\left[\Lambda r^{N-2}\partial_1\phi(-\Lambda u',N-1)\right]u'.
\end{align*}
Substituting this into ($\ref{eq_chv1}$) and again invoking the positive $1$-homogeneity of $\phi$ gives
\begin{align*}
&-\frac{(N-2)r^{N-3}}{u'}\phi(-\Lambda u',N-1)-\frac{d}{dr}\left[\Lambda r^{N-2}\partial_1\phi(-\Lambda u',N-1)\right]\\
&\hspace{0.2in}=-\frac{(N-2)r^{N-3}}{u'}\phi(-\Lambda u',N-1)+(N-1)(u+\lambda)r^{N-2}
\end{align*}
which gives ($\ref{eq_chv2}$) as desired.

\bigskip
\footnotesize
\noindent\textit{Acknowledgments.}

This work is part of the author's Ph.D. thesis at the University of Texas at Austin (2012).  The author would like to thank A. Figalli for fruitful discussions throughout the preparation of the work.  The author would also like to thank F. Maggi, E. Indrei, and the anonymous referee for helpful conversations and comments.  This material is based upon work supported by the National Science Foundation under Award No. DMS-1204557.

\end{document}